\def\thesection{\arabic{section}}
\def\theequation{\thesection.\arabic{equation}}
\newcommand{\ds} {\displaystyle}
\newcommand{\e}{\epsilon}
\newcommand{\pa} {\partial}
\newcommand{\ga} {\gamma}
\newcommand{\Om} {\Omega}
\newcommand{\ra} {\rightarrow}
\newcommand{\De} {\Delta}
\newcommand{\la} {\lambda}
\newcommand{\La} {\Lambda}
\newcommand{\noi} {\noindent}
\newcommand{\mb} {\mathbb}
\newcommand{\mc} {\mathcal}
\def\theequation{\@arabic{\c@section}.\@arabic{\c@equation}}
\def\QED{\hfill {$\square$}\goodbreak \medskip}
\newtheorem{Theorem}{Theorem}[section]
\newtheorem{Lemma}[Theorem]{Lemma}
\newtheorem{Proposition}[Theorem]{Proposition}
\newtheorem{Corollary}[Theorem]{Corollary}
\newtheorem{Definition}[Theorem]{Definition}
\begin{document}
{\vspace{0.01in}}

\title
{ \sc Critical growth fractional elliptic problem with singular nonlinearities}

\author{
{\bf  Tuhina Mukherjee\footnote{email: tulimukh@gmail.com}}\; and\; {\bf K. Sreenadh\footnote{e-mail: sreenadh@gmail.com}}\\
{\small Department of Mathematics}, \\{\small Indian Institute of Technology Delhi}\\
{\small Hauz Khaz}, {\small New Delhi-16, India}\\
 }

\date{}

\maketitle

\begin{abstract}

\noi In this article, we study the following fractional Laplacian equation with critical growth and singular nonlinearity
\begin{equation*}
 \quad (-\De)^s u = \la a(x)u^{-q} + u^{{2^*_s}-1}, \quad u>0 \; \text{in}\;
\Om,\quad u = 0 \; \mbox{in}\; \mb R^n \setminus\Om,
\end{equation*}
where  $\Om$ is a bounded domain in $\mb{R}^n$ with smooth boundary $\partial \Om$, $n > 2s,\; s \in (0,1),\; \la >0,\; 0 < q \leq 1 $, $\theta \leq a(x) \in L^{\infty}(\Om)$, for some $\theta>0$ and $2^*_s=\frac{2n}{n-2s}$. We use variational methods to show the existence and multiplicity of positive solutions of the above problem with respect to the parameter $\la$.
\medskip

\noi \textbf{Key words:} Nonlocal operator, fractional Laplacian, Singular nonlinearities.

\medskip

\noi \textit{2010 Mathematics Subject Classification:} 35R11, 35R09, 35A15.

\end{abstract}

\section{Introduction}
Let $\Om \subset \mb R^n$ be a bounded domain with smooth boundary $\partial \Om$, $n>2s$ and $s \in (0,1)$. We consider the following problem with singular nonlinearity :
\begin{equation*}
(P_{\la}): \quad
 \quad (-\De)^s u = \la a(x)u^{-q} + u^{2^*_s-1}, \quad u>0 \; \text{in}\;
\Om, \quad u = 0 \; \mbox{in}\; \mb R^n \setminus\Om,
\quad
\end{equation*}
where $\la >0,\; 0 < q \leq 1 $, $\theta \leq a(x) \in L^{\infty}(\Om)$ for some $\theta > 0$, $2^*_s=\frac{2n}{n-2s}$ and $(-\De)^s$ is the fractional Laplace operator defined as
$$ (-\De)^s u(x) = - \frac{1}{2}\int_{\mb R^n} \frac{u(x+y)+u(x-y)-2u(x)}{|y|^{n+2s}}~dy , \; \; \text{for all }\; x \in \mb R^n. $$
The fractional power of Laplacian is the infinitesimal generator of L$\acute{e}$vy stable diffusion process and arise in anomalous diffusion in plasma, population dynamics, geophysical fluid dynamics, flames propagation, chemical reactions in liquids and American options in finance. For more details, we refer to \cite{da,gl}.

\noi Recently, the study of the fractional elliptic equations attracted  lot of interest by researchers in nonlinear analysis.
There are many works on existence of a solution for fractional elliptic equations with regular nolinearities like $u^q+\la u^p, \; p,\; q>0$. The sub critical growth problems are studied in  \cite{XcT,s1,s2}  and critical exponent problems are studied in \cite{bc,mb,mb1,s3,sv}. Also, the multiplicity of solutions by  the method of Nehari manifold and fibering maps has been investigated in \cite{ss1, ss2,xy,zlh}. To the best of our knowledge, there are no works dealing with multiplicity results with singular and critical nonlinearities. We also refer \cite{Aut,GR1,JPS,md1,md2,mb3,mb2,puc} for related works with fractional Laplacian, singular nonlinearities, critical growth or critical exponential nonlinearities. In this paper, we attempt to address the multiplicity of positive solutions of problem with singular type nonlinearity $\la u^{-q}+ u^{2^*_s-1},\; 0<q\leq1$.

\medskip
\noi In the local setting ($s=1$), the paper by Crandall, Rabinowitz and Tartar \cite{crt} is the starting point on semilinear problem with singular nonlinearity. A lot of work has been done related to existence and multiplicity results on singular nonlinearity. Among them we cite the reader to \cite{haitao,GR,NR,HFV,HKS,hcn,Hirano} and references therein. In \cite{hcn}, authors studied the critical growth singular problem
\[
-\De u = \la u^{-q}+ u^{2^*-1},\quad u>0 \; \text{in}\; \Omega, \quad u=0 \; \text{on}\; \pa \Om,
\]
where $0<q<1$. Using the variational methods and the geometry of the Nehari manifold, they proved the existence of multiple solutions in a suitable range of $\la$. Among the works dealing with elliptic equations with singular and critical growth terms, we cite also  \cite{AJ, DJP, haitao, Hirano} and references there in, with no attempt to provide a complete list.
\medskip

\noi
The fractional elliptic problem with only singular nonlinear term is studied by Fang \cite{fa} where author studied the following problem
 \begin{equation*}
 (-\De)^s u = u^{-p}, \quad u>0  \; \text{in}\;
\Om, \quad u = 0 \; \mbox{in}\; \mathbb{R}^n\backslash\Om,
\end{equation*}
with $0<p<1$. Here, authors used the method of sub and super solutions to show the existence of solution. Recently, in \cite{peral} the authors considered the problem
\begin{equation*}
(-\De)^s u = \la\frac{f(x)}{u^\ga} + M u^{p}, \quad
 u>0\;\text{in}\;\Om, \quad
 u = 0 \; \mbox{in}\; \mb R^n \setminus\Om,
\end{equation*}
where  $n>2s$, $M\ge 0$, $0<s<1$, $\ga>0$, $\la>0$, $1<p<2_{s}^{*}-1$ and $f\in L^{m}(\Om)$, $m\geq 1$ is a nonnegative function. Here, authors studied the existence of distributional solutions using the uniform estimates of  $\{u_n\}$ which are solutions of the regularized problems with singular  term $u^{-\ga}$ replaced by $(u+\frac{1}{n})^{-\ga}$.
They also discussed the multiplicity results when $M>0$ and for small $\lambda$ in  the sub critical case.

\medskip

\noi In this paper, we study the multiplicity results with convex-concave type critical growth and singular nonlinearity. Here, we follow the approach as in the work of Hirano, Saccon and Shioji \cite{hcn}. We obtain our results by studying the existence of minimizers that arise out of structure of Nehari manifold. We would like to remark that the results proved here are new even for the case $q=1$. Also, the multiplicity result is sharp in the sense that we consider the maximal range of $\lambda$ for which the corresponding fibering maps have two critical points.

\medskip
\noi The paper is organized as follows: In section 2, we present some preliminaries on function spaces required for variational settings. In section 3, we study the corresponding Nehari manifold and properties of minimizers. In section 4 and 5, we show the existence of minimizers and solutions. In section 6, we show some regularity results.
\section{Preliminaries and Main Results}
We recall some definitions of function spaces and results that are required in later sections.
\noi In \cite{sv}, Servadei and  Valdinoci discussed the Dirichlet
boundary value problem in case of fractional Laplacian using the variational techniques.  Due to nonlocalness of the fractional
Laplacian, they introduced the function space $(X_0,\|.\|_{X_0})$.
The space $X$ is defined as
\[X= \left\{u|\;u:\mb R^n \ra\mb R \;\text{is measurable},\;
u|_{\Om} \in L^2(\Om)\;
 \text{and}\;  \frac{(u(x)- u(y))}{ |x-y|^{\frac{n}{2s}+s}}\in
L^2(Q)\right\},\]
\noi where $Q=\mb R^{2n}\setminus(\mc C\Om\times \mc C\Om)$ and
 $\mc C\Om := \mb R^n\setminus\Om$. The space X is endowed with the norm defined as
\begin{align*}
 \|u\|_X = \|u\|_{L^2(\Om)} + \left[u\right]_X, \quad \text{where}\; \left[u\right]_X= \left( \int_{Q}\frac{|u(x)-u(y)|^{2}}{|x-y|^{n+2s}}dx
dy\right)^{\frac12}.
\end{align*}
 Then we define $ X_0 = \{u\in X : u = 0 \;\text{a.e. in}\; \mb R^n\setminus \Om\}$. Also, there exists a constant $C>0$ such that $\|u\|_{L^{2}(\Om)} \le C [u]_X$, for all $u\in X_0$. Hence,  $\|u\|=[u]_X$ is a norm on $(X_0, \|.\|)$ and  $X_0$ is a Hilbert space. Note that the norm $\|.\|$ involves the interaction between $\Om$ and $\mb R^n\backslash\Om$. We denote $\|.\|_{L^p(\Om)}$ as $\|.\|_p$ and $\|.\|=[.]_X$ for the norm in $X_0$.

%
\noi Now for each $\alpha \geq 0$, we set
\begin{equation}\label{eq00}
 C_{\alpha} = \sup \left \{ \int_{\Om} |u|^\alpha dx : \|u\| = 1\right \}.
 \end{equation}
Then $C_0  = |\Om| $ = Lebesgue measure of $\Om$ and
$\int_{\Om} |u|^\alpha dx \leq C_\alpha \|u\|^{\alpha}$, for all $ u \in X_0$.
 In the case of $n > 2s$, we set $2^*_s = \frac{2n}{ n-2s}$ , $0<s<1$. From the embedding results, we know that $X_0$ is continuously and compactly embedded in $L^r(\Om)$  when $1\leq r < 2^*_s$ and the embedding is continuous but not compact if $r= 2^*_s$. We define
 \begin{equation*}
 S = \inf_{u \in X_0\setminus \{0\}} \frac{\int_Q \frac{|u(x)-u(y)|^2}{|x-y|^{n+2s}}dxdy}{\left(\int_{\Om}|u|^{2^*_s}\right)^{2/2^*_s}} .
 \end{equation*}
\noi Consider the family of functions $\{U_{\epsilon}\}$, where $U_{\epsilon}$ is defined as
\[ U_{\epsilon} = \epsilon^{-(n-2s)/2}\; u^*\left(\frac{x}{\epsilon}\right),\; x \in \mb R^n\; \text{, for any} \; \epsilon>0, \]
where $u^*(x) = \bar{u}\left(\frac{x}{S^{\frac{1}{2s}}}\right),\; \bar{u}(x) = \frac{\tilde{u}(x)}{\|u\|_{2^*_s}}$ and $\tilde{u}(x)= \alpha(\beta^2 + |x|^2)^{-\frac{n-2s}{2}}$ with $\alpha \in \mb R \setminus \{0\}$ and $ \beta >0$ are fixed constants. Then for each $\epsilon > 0$, $U_\epsilon$ satisfies
\[ (-\De)^su = |u|^{2^*_s-2}u \; \;\text{in} \; \mb R^n \]
and verifies the equality
\[ \int_{\mb R^n} \frac{|U_\epsilon(x)-U_{\epsilon}(y)|^2}{|x-y|^{n+2s}} ~dxdy =\int_{\mb R^n} |U_\epsilon|^{2^*_s} = S^{\frac{n}{2s}}.\]
For a proof, we refer to \cite{s3}.

\begin{Definition}
We say $u$ is a positive weak solution of $(P_\la)$
if $u>0$ in $\Om$, $u \in X_0$ and
$$ \int_Q \frac{(u(x)-u(y))(\psi(x)-\psi(y))}{|x-y|^{n+2s}} ~dxdy - \int_\Om \left(\la a(x) u^{-q}  -  u^{2^{s}_{*}-1}\right) \psi ~dx = 0 \;\; \text{for all} \; \psi \in C^{\infty}_c(\Om).$$
\end{Definition}
We define the functional $I_{\la} : X_{0} \rightarrow (-\infty, \infty]$ by
\[ I_{\la}(u) = \frac12 \int_Q \frac{|u(x) - u(y)|^2}{|x-y|^{n+2s}}dxdy - \la \int_\Om a(x) G_q(u) dx - \frac{1}{2_{s}^{*}} \int_\Om |u|^{2_{s}^{*}}dx, \]
where $G_q : \mb{R} \rightarrow [-\infty, \infty)$ is the function defined by
$$G_q(x)=
\left\{
	\begin{array}{ll}
		\frac{|x|^{1-q}}{1-q}  & \mbox{if } 0<q<1 \\
		\ln|x| & \mbox{if } q=1
	\end{array}
\right.$$
for $x \in \mb{R}$. For each $0 < q \leq 1$, we set
$\ds X_+ = \{ u \in X_0 : u \geq 0\}$ and
$$X_{+,q} = \{ u \in X_+ : u \not\equiv 0, G_q(u) \in L^1(\Om)\}.$$
Notice that $ X_{+,q} = X_+ \setminus {0}$ if $0<q<1$ and $X_{+,1} \neq \emptyset$ if $\partial \Om$ is, for example, of class $C^2$. We will need the following important lemma.
\begin{Lemma}\label{lem2.1}
For each $w \in X_{+}$, there exists a sequence $\{w_k\}$ in $X_{0}$ such that, $w_k \rightarrow w$ strongly in $X_0$, where $0 \leq w_1 \leq w_2 \leq \ldots$ and $w_k$ has compact support in $\Om$, for each k .
\end{Lemma}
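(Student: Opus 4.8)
The plan is to give an explicit construction rather than a density/diagonalisation argument, since the approximants must be simultaneously monotone, compactly supported, and convergent in $X_0$. Write $d(x):=\mathrm{dist}(x,\mb R^n\setminus\Om)$, which is $1$‑Lipschitz, vanishes off $\Om$, is positive on $\Om$, and is smooth in a one‑sided collar $\{0<d<\rho_0\}$ of $\pa\Om$. Fix a nondecreasing Lipschitz $\zeta:\mb R\to[0,1]$ with $\zeta\equiv0$ on $(-\infty,\tfrac{1}{2}]$ and $\zeta\equiv1$ on $[1,\infty)$, and set, for $k\in\mb N$,
$$\theta_k:=\zeta(k\,d),\qquad w_k:=\theta_k\,\min(w,k).$$
First I would record the routine facts. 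Since $\zeta$ and $t\mapsto kt$ are nondecreasing and $d\ge0$, one has $0\le\theta_k\le\theta_{k+1}\le1$; with $0\le\min(w,k)\le\min(w,k+1)$ and $w\ge0$ this gives $0\le w_1\le w_2\le\cdots\le w$. Each $\theta_k$ is Lipschitz and vanishes on $\{d\le\tfrac{1}{2k}\}\supseteq\mb R^n\setminus\Om$, so $\mathrm{supp}\,\theta_k\subseteq\{d\ge\tfrac{1}{2k}\}$, a compact subset of $\Om$; hence $\theta_k\in X_0$, and being the product of a bounded Lipschitz function with the bounded $X_0$‑function $\min(w,k)$, so is $w_k$, with $\mathrm{supp}\,w_k\subseteq\{d\ge\tfrac{1}{2k}\}\Subset\Om$. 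Moreover $w_k\to w$ a.e. (since $\theta_k\to1$ pointwise on $\Om$ and $\min(w,k)\to w$ a.e.). Everything then reduces to proving $[w-w_k]_X\to0$.

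I would split $w-w_k=(w-k)_+ + (1-\theta_k)\min(w,k)$. Since $t\mapsto(t-k)_+$ is a contraction, the Gagliardo integrand of $(w-k)_+$ is pointwise dominated by that of $w\in X_0$ and tends to $0$ a.e., so $[(w-k)_+]_X\to0$ by dominated convergence. For the second term set $F_k:=(1-\theta_k)\min(w,k)$; it is supported in the boundary layer $S_k:=\{x\in\Om:d(x)<\tfrac{1}{k}\}$, satisfies $0\le F_k\le w$, and vanishes off $\Om$, so $[F_k]_X^2=\int_{\mb R^n}\int_{\mb R^n}|F_k(x)-F_k(y)|^2|x-y|^{-n-2s}\,dxdy$. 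I would split this into $A_k$ (integral over $S_k\times S_k$) and $B_k$ (the complementary region, where one point lies outside $S_k$, hence $F_k$ vanishes there and the integrand is $F_k(x)^2|x-y|^{-n-2s}$). On $A_k$ use $|F_k(x)-F_k(y)|\le|\min(w,k)(x)-\min(w,k)(y)|+\min(w,k)(y)\,|\theta_k(x)-\theta_k(y)|$: the first part is $\le|w(x)-w(y)|$ and, restricted to $S_k\times S_k\downarrow\emptyset$, contributes $o(1)$ by dominated convergence; for the second, $|\theta_k(x)-\theta_k(y)|\le\min(1,Lk|x-y|)$ and $\min(w,k)\le w$ give, after integrating in $x$, a bound $Ck^{2s}\int_{S_k}w^2$. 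For $B_k$ use $\int_{\mb R^n\setminus S_k}|x-y|^{-n-2s}dy\le C\,\mathrm{dist}(x,\mb R^n\setminus S_k)^{-2s}$, together with $\mathrm{dist}(x,\mb R^n\setminus S_k)=d(x)$ when $d(x)\le\tfrac{1}{2k}$ and the Lipschitz‑linear decay $F_k(x)\le Lk\bigl(\tfrac{1}{k}-d(x)\bigr)\,w(x)$ when $\tfrac{1}{2k}<d(x)<\tfrac{1}{k}$; this yields $B_k\le C\int_{\{d<1/(2k)\}}w^2d^{-2s}+Ck^{2s}\int_{S_k}w^2$.

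So the whole matter reduces to $\int_{\{d<\delta\}}w^2d^{-2s}\to0$ as $\delta\to0$ and $k^{2s}\int_{\{d<1/k\}}w^2\to0$, and both follow once $\int_\Om w^2d^{-2s}<\infty$, i.e. from the fractional Hardy inequality $\int_\Om w^2d^{-2s}\le C[w]_X^2$ for $w\in X_0$ (note $k^{2s}\int_{\{d<1/k\}}w^2\le\int_{\{d<1/k\}}w^2d^{-2s}$ since $d^{-2s}\ge k^{2s}$ there). This Hardy inequality I would either cite or prove in two lines: for $x\in\Om$ with $d(x)<\rho_0$ the exterior set $\mc K_x:=\{y\notin\Om:|x-y|<2d(x)\}$ has measure $\ge\kappa\,d(x)^n$ (by smoothness of $\pa\Om$), every $y\in\mc K_x$ satisfies $d(x)\le|x-y|<2d(x)$ and $w(y)=0$, hence $[w]_X^2\ge\int_{\{d<\rho_0\}}\int_{\mc K_x}w(x)^2|x-y|^{-n-2s}dy\,dx\ge c\int_{\{d<\rho_0\}}w^2d^{-2s}$, while on $\{d\ge\rho_0\}$ trivially $\int w^2d^{-2s}\le\rho_0^{-2s}\|w\|_2^2\le C[w]_X^2$. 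The main obstacle is precisely this coupling: $\theta_k$ is forced to have Lipschitz constant of order $k$ inside a layer of width $\tfrac{1}{k}$, so the error terms carry a factor $k^{2s}$ weighting $\int_{\{d<1/k\}}w^2$, and one must use the quantitative vanishing of $w$ at $\pa\Om$ encoded by fractional Hardy to absorb it; once that is available, monotonicity, membership in $X_0$, compact support, and the $(w-k)_+$ truncation are all straightforward.
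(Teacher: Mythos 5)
Your proof is correct, and it reaches the same conclusion by a genuinely different route than the paper. The paper's argument is a short, soft one: take a nonnegative sequence $\{\psi_k\}\subset C^{\infty}_c(\Om)$ converging to $w$ in $X_0$, pass to $z_k=\min\{\psi_k,w\}$ (still compactly supported and still converging to $w$), and build the monotone sequence inductively via $w_{k+1}=\max\{w_k,z_{r_{k+1}}\}$ with $\|w_{k+1}-w\|\le 1/(k+1)$, using that $\max\{w_k,z_m\}\to\max\{w_k,w\}=w$ as $m\to\infty$. Its two essential inputs — density of $C^\infty_c(\Om)$ in $X_0$, and the strong $X_0$-continuity of $u\mapsto|u|$ (hence of $\min$ and $\max$), which the paper invokes tacitly — are exactly what your explicit construction $w_k=\zeta(kd)\,\min(w,k)$ avoids. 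In your version the monotonicity and compact support are visible from the formula, and convergence is traced to a concrete quantitative mechanism: the boundary-layer error carries a $k^{2s}$ weight over $\{d<1/k\}$, which is absorbed by the fractional Hardy inequality $\int_\Om w^2 d^{-2s}\le C[w]_X^2$ for $w\in X_0$ (which you sketch from the uniform exterior cone given by smooth $\pa\Om$). The trade-off is length and machinery: you need the $A_k$/$B_k$ decomposition, the case analysis on $\mathrm{dist}(x,\mb R^n\setminus S_k)$, and Hardy, whereas the paper's induction is two lines once density and lattice continuity are granted. Yours has the advantage of being self-contained, of not presupposing the density of $C^\infty_c(\Om)$ in $X_0$, and of exhibiting a rate; the paper's is the leaner argument. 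The small claims you pass over quickly — that $\min(w,k)\in X_0$ by $1$-Lipschitz truncation, that a bounded Lipschitz cutoff with compact support in $\Om$ times a bounded $X_0$ function stays in $X_0$, and the generalized dominated convergence behind $[(w-k)_+]_X\to 0$ — are all standard and fine.
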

\begin{proof}
Let $w \in X_+$ and $\{\psi_k\}$ be sequence in $C^{\infty}_{c}(\Om)$ such that $\psi_k$ is nonnegative and converges strongly to $w$ in $X_0$. Define $z_k = \min \{\psi_k, w\}$,  then $z_k \rightarrow w$ strongly to $w$ in $X_0$. Now, we set $w_1 = z_{r_1}$ where $r_1>0$ is such that $\|z_{r_1}-w\| \leq 1$. Then $\max \{w_1, z_m\} \rightarrow w$ strongly as $m \rightarrow \infty$, thus we can find $r_2>0$ such that $\|max\{w_1,z_{r_2}\}-w\| \leq 1/2$. We set $w_2=max\{w_1,z_{r_2}\}$ and get $\max \{w_2, z_m\} \rightarrow w$ strongly as $m \rightarrow \infty$. Consequently, by induction we set, $w_{k+1}= \max\{w_k,z_{r_{k+1}}\}$ to obtain the desired sequence, since we can see that $w_k \in X_0$  has compact support, for each $k$ and $\|max\{w_k,z_{r_{k+1}}\}-w\| \leq 1/(k+1)$ which says that $\{w_k\}$ converges strongly to $w$ in $X_0$ as $k \rightarrow \infty$. \QED
\end{proof}
\noi For each $u\in X_{+,q}$ we define the fiber map $\phi_u:\mb R^+ \rightarrow \mb R$ by $\phi_u(t)=I_\la(tu)$. Then we prove the following:
\begin{Theorem} \label{thm3.2}
Assume $0<q \leq 1$. In case $q=1$, assume also $X_{+,1} \neq \emptyset$. Let $\Lambda$ be a constant defined by
 $\Lambda = \sup \left\{\la >0:\text{ for each}  \; u\in X_{+,q}\backslash\{0\}, ~\phi_u(t)~  \text{has two critical points in}  ~(0, \infty) \right\}.$
Then $\La >0$.
\end{Theorem}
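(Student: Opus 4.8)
The plan is to turn the statement into a scale-invariant lower bound that follows from the Sobolev inequality. Fix $u\in X_{+,q}\setminus\{0\}$ and look at $\phi_u(t)=I_\la(tu)$. Since $a(x)\ge\theta>0$ and $u\not\equiv0$, the quantity
\[
\mc A_q(u):=\begin{cases}\ds\int_\Om a(x)\,u^{1-q}\,dx, & 0<q<1,\\[2mm] \ds\int_\Om a(x)\,dx, & q=1,\end{cases}
\]
is strictly positive, and differentiating (using $G_q(tu)=t^{1-q}G_q(u)$ for $q<1$ and $G_1(tu)=\ln t+G_1(u)$) one gets, in both cases,
\[
\phi_u'(t)=t\|u\|^2-\la\,t^{-q}\mc A_q(u)-t^{2^*_s-1}\|u\|_{2^*_s}^{2^*_s},\qquad t>0.
\]
The factor $t^{-q}$ is the only singular feature; I would remove it by multiplying by $t^q$ and setting $m_u(t):=t^{1+q}\|u\|^2-t^{2^*_s+q-1}\|u\|_{2^*_s}^{2^*_s}$, so that $t^q\phi_u'(t)=m_u(t)-\la\,\mc A_q(u)$ for $t>0$; hence the critical points of $\phi_u$ on $(0,\infty)$ are precisely the solutions of $m_u(t)=\la\,\mc A_q(u)$.

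Next I would study $m_u$ as a function of $t>0$: $m_u(0)=0$, $m_u(t)\to-\infty$ as $t\to\infty$ (because $2^*_s+q-1>1+q$), and $m_u'$ vanishes at exactly one point $t_u=\big(\tfrac{(1+q)\|u\|^2}{(2^*_s+q-1)\|u\|_{2^*_s}^{2^*_s}}\big)^{1/(2^*_s-2)}$, where $m_u$ attains its maximum
\[
M_u:=m_u(t_u)=\frac{2^*_s-2}{2^*_s+q-1}\Big(\frac{1+q}{2^*_s+q-1}\Big)^{\frac{1+q}{2^*_s-2}}\,\frac{\|u\|^{\,2+\frac{2(1+q)}{2^*_s-2}}}{\|u\|_{2^*_s}^{\frac{2^*_s(1+q)}{2^*_s-2}}}.
\]
Since $m_u$ increases on $(0,t_u)$, decreases on $(t_u,\infty)$, and $\la\,\mc A_q(u)>0$, the equation $m_u(t)=\la\,\mc A_q(u)$ has exactly two roots iff $\la\,\mc A_q(u)<M_u$; that is, $\phi_u$ has two critical points in $(0,\infty)$ iff $\la<M_u/\mc A_q(u)$. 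Therefore it suffices to prove
\[
\La_0:=\inf_{u\in X_{+,q}\setminus\{0\}}\frac{M_u}{\mc A_q(u)}>0,
\]
because then $(0,\La_0)$ is contained in the set defining $\La$, so $\La\ge\La_0>0$.

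To bound $\La_0$ from below I would estimate numerator and denominator separately. For $M_u$, the Sobolev inequality $\|u\|_{2^*_s}^{2^*_s}\le S^{-2^*_s/2}\|u\|^{2^*_s}$ gives, after the powers of $\|u\|$ combine to $1-q$ (this cancellation is exactly the scaling invariance of $M_u/\mc A_q(u)$),
\[
M_u\ \ge\ c(n,s,q)\,S^{\frac{2^*_s(1+q)}{2(2^*_s-2)}}\,\|u\|^{1-q},
\]
with $c(n,s,q)=\tfrac{2^*_s-2}{2^*_s+q-1}\big(\tfrac{1+q}{2^*_s+q-1}\big)^{\frac{1+q}{2^*_s-2}}>0$. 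For the denominator, the embedding constants of \eqref{eq00} give $\mc A_q(u)\le\|a\|_\infty\int_\Om u^{1-q}\,dx\le\|a\|_\infty C_{1-q}\|u\|^{1-q}$ when $0<q<1$, while for $q=1$ simply $\mc A_1(u)=\int_\Om a\le\|a\|_\infty|\Om|=\|a\|_\infty C_0$, which is the same bound since $\|u\|^{1-q}\equiv1$ and $C_0=|\Om|$. Dividing, the factors $\|u\|^{1-q}$ cancel and $\La_0\ge c(n,s,q)\,S^{\frac{2^*_s(1+q)}{2(2^*_s-2)}}/(\|a\|_\infty C_{1-q})>0$, which proves $\La\ge\La_0>0$.

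All the computations above are elementary one-variable calculus, so I do not anticipate a genuine obstacle. The two points that need a little care are the passage through the singular factor $t^{-q}$ (handled by the substitution producing $m_u$) and treating the logarithmic case $q=1$ — where $\phi_u(t)\to+\infty$ as $t\to0^+$ instead of $\phi_u(t)\to0$ — on the same footing, which works because after multiplying $\phi_u'$ by $t^q=t$ the resulting equation $m_u(t)=\la\int_\Om a$ has exactly the same shape. The real content is the observation that ``$\phi_u$ has two critical points'' is equivalent to $\la\,\mc A_q(u)<M_u$, combined with the scale-invariant bound $M_u/\mc A_q(u)\ge\La_0>0$.
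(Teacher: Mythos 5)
Your proof is correct and follows the same route as the paper's Lemma 3.2: factor out $t^{-q}$ from $\phi_u'(t)$ to get $t^{-q}\bigl(m_u(t)-\la A(u)\bigr)$ with $m_u(t)=t^{1+q}\|u\|^2-t^{2^*_s-1+q}\|u\|_{2^*_s}^{2^*_s}$, compute the unique maximum $M_u=m_u(t_{\max})$, observe that two critical points exist precisely when $\la A(u)<M_u$, and then use the embedding constants $C_{1-q}$, $C_{2^*_s}$ (equivalently Sobolev) to show that $M_u/A(u)$ is uniformly bounded below, giving a threshold $\la_*>0$ (and hence $\La\geq\la_*>0$). Your packaging of the threshold as $\La_0=\inf_u M_u/\mc A_q(u)$ and the explicit check of scale invariance are cosmetic differences; the estimates and the constant are the same as the paper's $\la_*$.
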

Using the variational methods on the Nehari manifold (see section 3), we will prove the following multiplicity result.
\begin{Theorem}\label{thm2.4}
For all $\la \in (0, \Lambda)$, $(P_\la)$ has two solutions $u_\la$ and $v_\la$ in $X_{+,q}$.
\end{Theorem}

%

\section{Nehari Manifold and Fibering Map analysis}
We denote $I_{\la} = I$ for simplicity. In this section, we describe the structure of the Nehari manifold associated to the functional $I$.  One can easily verify that the energy functional $I$ is not bounded below on the space $X_0$. But we will show that $I$ is bounded below on this Nehari manifold and we will extract solutions by minimizing the functional on suitable subsets. The Nehari manifold is defined as
\[ \mc N_{\la} = \{ u \in X_{+,q} | \left\langle I^{\prime}(u),u\right\rangle = 0 \}. \]
\begin{Theorem}
$I$ is coercive and bounded below on $\mc N_{\la}$.
\end{Theorem}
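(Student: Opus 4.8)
The plan is to use the Nehari constraint to eliminate the critical term $\int_\Om|u|^{2^*_s}dx$ from $I_\la$ and then to bound what remains by a function of $\|u\|$ alone. For $u\in\mc N_\la$ one has $\phi_u'(1)=\langle I'(u),u\rangle=0$, and computing $\phi_u'$ from the definition of $I_\la$ yields, for $0<q<1$,
\[
\|u\|^2-\la\int_\Om a(x)u^{1-q}\,dx-\int_\Om|u|^{2^*_s}\,dx=0,
\]
and the same identity with $u^{1-q}$ replaced by $1$ when $q=1$. Inserting the resulting expression for $\int_\Om|u|^{2^*_s}dx$ into $I_\la(u)$ and using the explicit form of $G_q$, I would obtain
\[
I_\la(u)=\Big(\tfrac12-\tfrac{1}{2^*_s}\Big)\|u\|^2-\la\Big(\tfrac{1}{1-q}-\tfrac{1}{2^*_s}\Big)\int_\Om a(x)u^{1-q}\,dx \qquad (0<q<1),
\]
\[
I_\la(u)=\Big(\tfrac12-\tfrac{1}{2^*_s}\Big)\|u\|^2-\la\int_\Om a(x)\ln u\,dx+\frac{\la}{2^*_s}\int_\Om a(x)\,dx \qquad (q=1).
\]

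Next I would estimate the singular term. Since $n>2s$, $\tfrac12-\tfrac{1}{2^*_s}>0$, and for $0<q<1$, $\tfrac{1}{1-q}-\tfrac{1}{2^*_s}>0$. Using $\theta\le a\in L^\infty(\Om)$, $u\ge0$, and the embedding bound $\int_\Om|u|^\al\,dx\le C_\al\|u\|^\al$ associated with \eqref{eq00} for $\al=1-q\in(0,1)\subset(0,2^*_s)$, we get $0\le\int_\Om a(x)u^{1-q}\,dx\le\|a\|_\infty C_{1-q}\|u\|^{1-q}$, hence
\[
I_\la(u)\ge\Big(\tfrac12-\tfrac{1}{2^*_s}\Big)\|u\|^2-\la\Big(\tfrac{1}{1-q}-\tfrac{1}{2^*_s}\Big)\|a\|_\infty C_{1-q}\|u\|^{1-q}\qquad(0<q<1).
\]
For $q=1$, I would use $\ln t\le t$ for $t>0$ to get $-\int_\Om a(x)\ln u\,dx\ge-\int_\Om a(x)u\,dx\ge-\|a\|_\infty C_1\|u\|$, and, discarding the nonnegative term $\tfrac{\la}{2^*_s}\int_\Om a\,dx$,
\[
I_\la(u)\ge\Big(\tfrac12-\tfrac{1}{2^*_s}\Big)\|u\|^2-\la\|a\|_\infty C_1\|u\|\qquad(q=1).
\]

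In both cases the right-hand side is of the form $At^2-Bt^\sigma$ in $t=\|u\|$ with $A>0$, $B\ge 0$ and $0<\sigma<2$, a function bounded below on $[0,\infty)$ that tends to $+\infty$ as $t\to\infty$; this gives at once that $I_\la$ is real-valued, bounded below and coercive on $\mc N_\la$. The only step needing a little care is the case $q=1$: there $\ln u$ is not sign-definite and $G_1(u)$ is integrable only because $u\in X_{+,1}$, so one cannot estimate $-\int a\ln u$ crudely; the inequality $\ln u\le u$ is exactly what converts it into the linearly controlled quantity $\int_\Om a\,u\,dx$. I do not expect any further difficulty, the rest being the routine algebra of substituting the Nehari identity and invoking the continuous embedding $X_0\hookrightarrow L^r(\Om)$, $r<2^*_s$.
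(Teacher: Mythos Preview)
Your proposal is correct and follows essentially the same approach as the paper: substitute the Nehari identity to eliminate the critical term, then in the case $0<q<1$ bound $\int_\Om a(x)u^{1-q}\,dx$ via the embedding $X_0\hookrightarrow L^{1-q}(\Om)$, and in the case $q=1$ use $\ln u\le u$ together with $X_0\hookrightarrow L^1(\Om)$, arriving in both cases at a lower bound of the form $A\|u\|^2-B\|u\|^\sigma$ with $0<\sigma<2$. Your treatment is in fact slightly more careful than the paper's (you track the nonnegative constant $\tfrac{\la}{2^*_s}\int_\Om a$ explicitly before discarding it), but there is no substantive difference in method.
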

\begin{proof} Case (I) ($0<q<1$): Since $u\in \mc N_\la$, using the embedding of $X_0$ in $L^{1-q}(\Om)$, we obtain
\begin{equation*}
\begin{split}
I(u) & = \left(\frac{1}{2}-\frac{1}{2^{*}_{s}}\right)\|u\|^2- \la \left(\frac{1}{1-q}-\frac{1}{2^*_s}\right)\int_{\Om} a(x)|u|^{1-q}dx\\
& \geq c_1 \|u\|^2 - c_2 \|u\|^{1-q}
\end{split}
\end{equation*}
for some nonnegative constants $c_1$ and $c_2$. Thus, $I$ is coercive and bounded below on $\mc N_{\la}$.\\
\noi Case (II) ($q=1$): In this case, using the inequality $\ln|u| \leq |u|$  and $X_0 \hookrightarrow L^1(\Om)$ we obtain
\begin{equation}
\begin{split}
I(u) &=  \left(\frac{1}{2}-\frac{1}{2^{*}_{s}}\right)\|u\|^2- \la \left( \int_{\Om} a(x) (\ln |u| - 1)~dx\right)\\
& \geq \left(\frac{1}{2}-\frac{1}{2^{*}_{s}}\right)\|u\|^2- \la \left( \int_{\Om} a(x)\ln |u| ~dx \right)\\
& \geq c_1^{\prime} \|u\|^2 - c_2^{\prime} \|u\|
\end{split}
\end{equation}
for some nonnegative constants $c_1^{\prime}$ and $c_2^{\prime}$. This again implies that $I$ is coercive and bounded below on $\mc N_{\la}$.\QED
\end{proof}

\noi From the definition of fiber map $\phi_u$, we have
$$\phi_u(t)=
\begin{cases} \ds
		\frac{t^2}{2} \|u\|^2 - \frac{t^{1-q}}{1-q} \int_{\Om} a(x)|u|^{1-q} dx - \frac{t^{2_{s}^{*}}}{2_{s}^{*}} \int_{\Om} |u|^{2_{s}^{*}} dx & \; \text{if}\; ~  0<q<1 \\
		\ds \frac{t^2}{2} \|u\|^2 - \frac{\la}{1-q} \int_{\Om} a(x)\ln(t|u|) dx - \frac{t^{2_{s}^{*}}}{2_{s}^{*}} \int_{\Om} |u|^{2_{s}^{*}} dx  & \; \text{if}\;~ q=1.
	\end{cases}$$
which gives
\[ \phi^{\prime}_u(t) = t \|u\|^2 -\la t^{-q} \int_{\Om} a(x)|u|^{1-q} dx - t^{2_{s}^{*}-1} \int_{\Om}|u|^{2_{s}^{*}} dx \]
\[\text{and }\; \phi^{\prime \prime}_u(t) = \|u\|^2 + q \la t^{-q-1} \int_{\Om} a(x)|u|^{1-q} dx - (2_{s}^{*}-1) t^{2_{s}^{*}-2} \int_{\Om}|u|^{2_{s}^{*}} dx .\]
It is easy to see that the points in $\mc N_{\la}$ are corresponding to critical points of $\phi_{u}$ at $t=1$. So, it is natural to divide $\mc N_{\la}$ in three sets corresponding to local minima, local maxima and points of inflexion. Therefore, we define
\begin{align*}
\mc N_{\la}^{+} = & \{ u \in \mc N_{\la}|~ \phi^{\prime}_u (1) = 0,~ \phi^{\prime \prime}_u(1) > 0\}=  \{ t_0u \in \mc N_{\la} |\; t_0 > 0,~ \phi^{\prime}_u (t_0) = 0,~ \phi^{\prime \prime}_u(t_0) > 0\},\\
\mc N_{\la}^{-} = & \{ u \in \mc N_{\la} |~ \phi^{\prime}_u (1) = 0,~ \phi^{\prime \prime}_u(1) <0\}=  \{ t_0u \in \mc N_{\la} |\; t_0 > 0, ~ \phi^{\prime}_u (t_0) = 0, ~\phi^{\prime \prime}_u(t_0) < 0\}
\end{align*}
and $ \mc N_{\lambda}^{0}= \{ u \in \mc N_{\la} | \phi^{\prime}_{u}(1)=0, \phi^{\prime \prime}_{u}(1)=0 \}. $
\begin{Lemma}\label{lem3.2}
There exist $\la_*>0$ such that for each $u\in X_{+,q}\backslash\{0\}$, there is unique $t_{\max}, t_1$ and $t_2$ with the property that $t_1<t_{max}<t_2, $$t_1 u\in \mc N_{\la}^{+}$ and $ t_2 u\in \mc N_{\la}^{-}$, for all $\la \in (0,\la_*)$.
\end{Lemma}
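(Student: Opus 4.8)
\medskip
\noi\textbf{Proof proposal.} The plan is to reduce the entire statement to the elementary analysis of one real function attached to $u$. Fix $u\in X_{+,q}\backslash\{0\}$; then $\|u\|>0$, $\int_\Om|u|^{2^*_s}\,dx>0$, and $B_u:=\la\int_\Om a(x)|u|^{1-q}\,dx$ is finite and strictly positive, because $\theta\le a\in L^\infty(\Om)$ and $u\not\equiv 0$ (when $q=1$, $B_u=\la\int_\Om a\,dx$). I introduce
$$h_u(t)=t^{1+q}\|u\|^2-t^{2^*_s+q-1}\int_\Om|u|^{2^*_s}\,dx,\qquad t>0,$$
so that $t^q\phi_u'(t)=h_u(t)-B_u$. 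Consequently the positive critical points of $\phi_u$ are exactly the solutions of $h_u(t)=B_u$, and differentiating this identity and evaluating at such a point $t_0$ gives $t_0^q\,\phi_u''(t_0)=h_u'(t_0)$; thus the sign of $\phi_u''(t_0)$ agrees with that of $h_u'(t_0)$. This last observation is the tool that will decide membership in $\mc N_\la^{+}$ or $\mc N_\la^{-}$.

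\noi Next I would analyse $h_u$ by one-variable calculus. Since $2^*_s>2$, we have $h_u(t)\to 0$ as $t\to 0^+$, $h_u(t)\to-\infty$ as $t\to\infty$, and
$$h_u'(t)=t^q\Big[(1+q)\|u\|^2-(2^*_s+q-1)t^{2^*_s-2}\int_\Om|u|^{2^*_s}\,dx\Big]$$
vanishes at the unique point
$$t_{\max}=\left(\frac{(1+q)\|u\|^2}{(2^*_s+q-1)\int_\Om|u|^{2^*_s}\,dx}\right)^{1/(2^*_s-2)},$$
with $h_u$ strictly increasing on $(0,t_{\max})$ and strictly decreasing on $(t_{\max},\infty)$; hence $h_u$ attains its global maximum at $t_{\max}$. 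A direct substitution yields $h_u(t_{\max})=\frac{2^*_s-2}{2^*_s+q-1}\,t_{\max}^{1+q}\|u\|^2$, and inserting the value of $t_{\max}$ together with the Sobolev inequality $\int_\Om|u|^{2^*_s}\,dx\le S^{-2^*_s/2}\|u\|^{2^*_s}$, the powers of $\|u\|$ collapse: the net exponent comes out to be exactly $1-q$, so $h_u(t_{\max})\ge K\|u\|^{1-q}$ for a constant $K=K(n,s,q)>0$ independent of $u$. On the other hand $B_u\le\la\|a\|_\infty\int_\Om|u|^{1-q}\,dx\le\la\|a\|_\infty C_{1-q}\|u\|^{1-q}$, with $C_{1-q}$ as in \eqref{eq00}; for $q=1$ this reads $B_u\le\la\|a\|_\infty|\Om|$ and $\|u\|^{1-q}=1$.

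\noi Now set $\la_*=K/(\|a\|_\infty C_{1-q})>0$. For every $\la\in(0,\la_*)$ and every $u\in X_{+,q}\backslash\{0\}$ we then have $0<B_u<h_u(t_{\max})$. Since $h_u$ rises continuously from $0$ to $h_u(t_{\max})$ on $(0,t_{\max})$ and decreases from $h_u(t_{\max})$ to $-\infty$ on $(t_{\max},\infty)$, the equation $h_u(t)=B_u$ has exactly two roots $t_1\in(0,t_{\max})$ and $t_2\in(t_{\max},\infty)$; these are the only positive critical points of $\phi_u$, and $t_1<t_{\max}<t_2$. Finally $h_u'(t_1)>0$ and $h_u'(t_2)<0$, so by the sign identity above $\phi_u'(t_1)=0$ with $\phi_u''(t_1)>0$ and $\phi_u'(t_2)=0$ with $\phi_u''(t_2)<0$, i.e. $t_1u\in\mc N_\la^{+}$ and $t_2u\in\mc N_\la^{-}$. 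Uniqueness of $t_1$ and $t_2$ is forced by the strict monotonicity of $h_u$ on each of the two intervals.

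\noi The only point beyond routine one-variable calculus is the uniform estimate $h_u(t_{\max})\ge K\|u\|^{1-q}$: one must track the exponents carefully after substituting $t_{\max}$ and using the Sobolev inequality to see that the resulting power of $\|u\|$ is exactly $1-q$ — this is precisely what makes $\la_*$ independent of $u$ and matches the power in the upper bound for $B_u$. The case $q=1$ is handled by the same computation (then $1-q=0$, so both $h_u(t_{\max})$ and the relevant bound for $B_u$ are, up to constants, independent of $u$), the only additional remark being that $\phi_u=I_\la(tu)$ is finite there precisely because $u\in X_{+,1}$.
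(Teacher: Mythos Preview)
Your proof is correct and follows essentially the same route as the paper's: factor $\phi_u'(t)=t^{-q}\big(h_u(t)-\la\!\int_\Om a|u|^{1-q}\big)$ (the paper calls $h_u$ by $m_u$), locate the unique maximum of $h_u$ at $t_{\max}$, and compare $h_u(t_{\max})$ against $\la\|a\|_\infty C_{1-q}\|u\|^{1-q}$ to produce a uniform $\la_*$; the two roots $t_1<t_{\max}<t_2$ then land in $\mc N_\la^{+}$ and $\mc N_\la^{-}$ via the sign of $h_u'$. Your explicit identity $t_0^{\,q}\phi_u''(t_0)=h_u'(t_0)$ at critical points is a nice way to make that last step precise, but otherwise the argument and the resulting constant $\la_*$ coincide with the paper's.
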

\begin{proof}
Define $A(u)= \int_{\Om}a(x)|u|^{1-q}~dx$ and $B(u)= \int_{\Om}|u|^{2_{s}^{*}}$. Let $u \in X_{+,q}$ then we have
\begin{align*}
\frac{d}{dt}I(tu)
=& t\|u\|^2 - t^{-q} A(u) - t^{2_{s}^{*}-1} B(u)\\
=&t^{-q} \left (m_u(t) - \la A(u) \right )
\end{align*}
and we define $m_u(t) := t^{1+q} \|u\|^2 - t^{2_{s}^{*}-1+q} B(u)$. Since $\ds \lim_{t \rightarrow \infty} m_u(t) = - \infty$,
we can easily see that $m_u(t)$ attains its maximum at $t_{max} = \left [ \frac{(1+q)\|u\|^2}{(2_{s}^{*}-1+q) B(u)} \right]^{\frac{1}{2_{s}^{*}-2}} $ and
\[ m_u(t_{max}) = \left( \frac{2_{s}^{*}-2}{2_{s}^{*}-1+q} \right) \left( \frac{1+q}{2_{s}^{*}-1+q} \right)^{\frac{1+q}{2_{s}^{*}-2}} \frac{(\|u\|^2)^\frac{2_{s}^{*}-1+q}{2_{s}^{*}-2}}{(B(u))^\frac{1+q}{2_{s}^{*}-2}}. \]
Now, $u \in \mc N_{\la}$  if and only if  $m_u(t) =  \la A(u) $ and we see that
\begin{equation*}
\begin{split}
m_u(t_{max}) - \la A(u)dx
\geq ~&  m_u(t_{max}) - \la \|a\|_{\infty} \|u\|^{1-q}_{{1-q}}\\
\geq ~& \left( \frac{2_{s}^{*}-2}{2_{s}^{*}-1+q} \right) \left( \frac{1+q}{2_{s}^{*}-1+q} \right)^{\frac{1+q}{2_{s}^{*}-2}} \frac{(\|u\|^2)^\frac{2_{s}^{*}-1+q}{2_{s}^{*}-2}}{B(u)^\frac{1+q}{2_{s}^{*}-2}} - \la \|a\|_{\infty} \|u\|^{1-q}_{{1-q}}>0\\
\end{split}
\end{equation*}
if and only if $\la <  \left( \frac{2_{s}^{*}-2}{2_{s}^{*}-1+q} \right) \left( \frac{1+q}{2_{s}^{*}-1+q} \right)^{\frac{1+q}{2_{s}^{*}-2}} ({C_{2_{s}^{*}}})^\frac{-1-q}{2_{s}^{*}-2} (\|a\|_{\infty} {C_{1-q}})^{-1} = \la_* $(say), where $C_{\alpha}$ is defined in \eqref{eq00}.

 \noi Case(I) $(0<q<1)$: We can also see that $ m_u(t) = \la \int_{\Om} a(x) |u|^{1-q}dx$ if and only if $\phi^{\prime}_u (t) = 0$. So for $\la \in (0,\lambda_*)$, there exists exactly two points $0<t_1<t_2$ with $m^{\prime}_u(t_1)>0$ and $m^{\prime}_u(t_2)<0$ that is, $t_1u \in \mc N^{+}_{\la}$ and $t_2u \in \mc N^{-}_{\la}$. Thus, $\phi_u$ has a local minimum at $t=t_1$ and a local maximum at $t=t_2$, that is $\phi_{u}$ is decreasing in $(0,t_1)$ and increasing in $(t_1,t_2)$.

\noi Case(II)$(q=1)$: Since $\ds \lim_{t \rightarrow 0} \phi_{u}(t) = \infty$ and $\ds \lim_{t \rightarrow \infty} \phi_{u}(t) = - \infty$ with similar reasoning as above we obtain $t_1, t_2$.  
That is, in both cases $\phi_{u}$ has exactly two critical points $t_1$ and $t_2$ such that $0< t_1 < t_2$, $\phi^{\prime \prime}_{u}(t_1) > 0$ and  $\phi^{\prime \prime}_{u}(t_2) < 0$ that is $t_1u \in \mc N_{\la}^{+}$, $t_2u \in \mc N_{\la}^{-}$.\QED
\end{proof}
\begin{Corollary}
$\mc N_{\la}^{0} = \{0\}$ for all $ \la \in (0, \La)$.
\end{Corollary}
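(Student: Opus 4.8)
The plan is to argue by contradiction, using the explicit description of the fibering map established in the proof of Lemma~\ref{lem3.2} together with the characterization of $\Lambda$ that is implicit there. Fix $\la \in (0,\Lambda)$ and suppose, for contradiction, that $\mc N_\la^0$ contains a (necessarily nonzero) element $u$; since $\mc N_\la \subseteq X_{+,q}$, we have $u \in X_{+,q}\setminus\{0\}$, so $A(u) = \int_\Om a(x)|u|^{1-q}\,dx > 0$ (because $a \geq \theta > 0$ and $u \not\equiv 0$) and $B(u) = \int_\Om |u|^{2^*_s}\,dx > 0$.

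Next I would record the two facts about $\phi_u$ that were essentially computed in Lemma~\ref{lem3.2}: writing $\phi_u'(t) = t^{-q}\big(m_u(t) - \la A(u)\big)$ with $m_u(t) = t^{1+q}\|u\|^2 - t^{2^*_s-1+q}B(u)$, a point $t_0 > 0$ satisfies $\phi_u'(t_0)=0$ precisely when $m_u(t_0) = \la A(u)$, and differentiating once more shows that at such a $t_0$ one has $\phi_u''(t_0) = t_0^{-q} m_u'(t_0)$. Since $m_u$ is increasing on $(0,t_{max})$ and decreasing on $(t_{max},\infty)$, its derivative vanishes only at $t_{max}$; hence $u \in \mc N_\la^0$ forces both $m_u(1) = \la A(u)$ and $m_u'(1) = 0$, i.e. $1 = t_{max}$ and $\la A(u) = m_u(t_{max})$.

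It then remains to contradict this equality. The computation in Lemma~\ref{lem3.2} shows that $\phi_u$ has exactly two critical points in $(0,\infty)$ if and only if $0 < \la A(u) < m_u(t_{max})$, i.e. if and only if $\la < m_u(t_{max})/A(u)$; consequently the set of $\la$ appearing in the definition of $\Lambda$ equals $\big(0,\ \inf_{v \in X_{+,q}\setminus\{0\}} m_v(t_{max})/A(v)\big)$, so that $\Lambda = \inf_{v} m_v(t_{max})/A(v)$. For $\la \in (0,\Lambda)$ this gives $\la A(u) < m_u(t_{max})$ strictly for every admissible $u$, contradicting $\la A(u) = m_u(t_{max})$ from the previous step. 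Hence no such $u$ exists and $\mc N_\la^0 = \{0\}$, as claimed.

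I expect the only delicate point to be the identity $\Lambda = \inf_{v} m_v(t_{max})/A(v)$ — in particular checking that the $\la$-set in the definition of $\Lambda$ is genuinely the half-open interval $(0,\Lambda)$, open at the right endpoint (a supremum that need not be attained). This is precisely why the conclusion is stated for $\la \in (0,\Lambda)$ and not for $\la = \Lambda$: at $\la=\Lambda$ a minimizing function (or a limiting one) could yield a point of $\mc N_\Lambda^0$. Everything else is the one–variable analysis of $m_u$ already carried out in Lemma~\ref{lem3.2}, so no new estimate is required.
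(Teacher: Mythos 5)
Your argument is correct and follows essentially the same route as the paper: both rest on the one-variable analysis of $m_u$ from Lemma~\ref{lem3.2}, which shows that a degenerate critical point of $\phi_u$ forces $1=t_{\max}$ and $\la A(u)=m_u(t_{\max})$, incompatible with $\la<\La$ since then $\phi_u$ has two nondegenerate critical points for every $u$. You simply make explicit the identification $\La=\inf_{v}m_v(t_{\max})/A(v)$ and the monotonicity in $\la$ that the paper leaves implicit (stating Lemma~\ref{lem3.2} only for $\la<\la_*$ and invoking it loosely), so this is a slightly more careful version of the same proof.
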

\begin{proof}
Let $u \not\equiv 0 \in \mc N_{\la}^{0}$. Then $u \in \mc N_{\la}^{0}$ implies $u \in \mc N_{\la}$ that is, $1$ is a critical point of $\phi_{u}$. Using previous results, we say that $\phi_{u}$ has critical points corresponding to local minima or local maxima. So, $1$ is the critical point corresponding to local minima or local maxima of $\phi_{u}$. Thus, either $u \in \mc N_{\la}^{+}$ or $u \in \mc N_{\la}^{-}$ which is a contradiction.\QED
\end{proof}
{\bf Proof of Theorem \ref{thm3.2}}:
From lemma \ref{lem3.2}, we see that $\Lambda$ is positive. If $I_\la(tu)$ has two critical points for some $\lambda=\lambda^*$, then $t\mapsto I_\la(tu)$ also has two critical points for all $\la<\la^*$.\QED

\noi We can show that $\mc N_{\la}^{+}$ and $\mc N_{\la}^{-}$ are bounded in the following way:

\begin{Lemma}\label{le01}
The following holds:
\begin{enumerate}
\item[$(i)$] $\sup \{ \|u\|: u \in \mc N_{\la}^{+}\} < \infty $
\item[$(ii)$] $\inf \{ \|v\|: v \in \mc N_{\la}^{-} \} >0$  and $ \sup \{ \|v\| : v \in \mc N_{\la}^{-} , I(v) \leq M\} < \infty$ for each $M > 0$.
\end{enumerate}
Moreover, $\inf I(\mc N_{\la}^{+}) > - \infty$ and $\inf I(\mc N_{\la}^{-}) > - \infty$.
\end{Lemma}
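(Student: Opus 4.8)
\noi The plan is to read off each required bound directly from the two Nehari identities, together with the Sobolev inequality and the coercivity of $I$ on $\mc N_\la$ already established. Keep the notation $A(u)=\int_\Om a(x)|u|^{1-q}\,dx$ and $B(u)=\int_\Om|u|^{2^*_s}\,dx$ of Lemma \ref{lem3.2}; for $q=1$ read $A(u)=\int_\Om a\,dx>0$, in accordance with the formulas for $\phi_u'$ and $\phi_u''$ in that case. Thus $u\in\mc N_\la$ means $\phi_u'(1)=0$, i.e. $\|u\|^2=\la A(u)+B(u)$, and the sign of $\phi_u''(1)=\|u\|^2+q\la A(u)-(2^*_s-1)B(u)$ decides between $\mc N_\la^{+}$ and $\mc N_\la^{-}$; note $A(u)\ge0$ always.

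\medskip
\noi \emph{Part (i).} For $u\in\mc N_\la^{+}$, eliminate $B(u)$ between $\|u\|^2=\la A(u)+B(u)$ and $\phi_u''(1)>0$ to obtain $(2^*_s-2)\|u\|^2<(2^*_s-1+q)\la A(u)$. If $0<q<1$, estimate $A(u)\le\|a\|_\infty C_{1-q}\|u\|^{1-q}$; since $1-q<2$ this yields $\|u\|^{1+q}\le C\la$, hence a bound on $\|u\|$ uniform over $\mc N_\la^{+}$. If $q=1$, then $A(u)=\int_\Om a$ is constant and the inequality reads $\|u\|^2\le\tfrac{2^*_s}{2^*_s-2}\,\la\int_\Om a$. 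In either case $\sup\{\|u\|:u\in\mc N_\la^{+}\}<\infty$.

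\medskip
\noi \emph{Part (ii).} For the lower bound take $v\in\mc N_\la^{-}$. Since $A(v)\ge0$, the inequality $\phi_v''(1)<0$ forces $(2^*_s-1)B(v)>\|v\|^2$; combined with $B(v)\le S^{-2^*_s/2}\|v\|^{2^*_s}$ (from the definition of $S$) this gives $\|v\|^{2^*_s-2}>S^{2^*_s/2}/(2^*_s-1)$, a positive lower bound independent of $v$ and of $\la$. For the sublevel bound, recall that $I$ is coercive on $\mc N_\la\supset\mc N_\la^{-}$: one has $I(v)\ge c_1\|v\|^2-c_2\|v\|^{1-q}$ when $0<q<1$ and $I(v)\ge c_1'\|v\|^2-c_2'\|v\|$ when $q=1$, with $c_1,c_1'>0$; hence $I(v)\le M$ bounds $\|v\|$ in terms of $M$. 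Finally, since $\mc N_\la^{\pm}\subset\mc N_\la$ and $I$ is bounded below on $\mc N_\la$, we get $\inf I(\mc N_\la^{\pm})\ge\inf I(\mc N_\la)>-\infty$. (On $\mc N_\la^{+}$ this is also immediate from (i): for $0<q<1$, $I(u)=(\tfrac12-\tfrac{1}{2^*_s})\|u\|^2-\la(\tfrac{1}{1-q}-\tfrac{1}{2^*_s})A(u)$ with $\|u\|$ and $A(u)$ bounded, and likewise for $q=1$.)

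\medskip
\noi The argument is elementary algebra on the Nehari conditions, so there is no real obstacle; the only non-mechanical inputs are the Sobolev constant $S$ (for the lower bound in (ii)) and the already-proved coercivity of $I$ on $\mc N_\la$ (for the sublevel bound and the last clause). The sole point needing separate care is the case $q=1$, where $A(u)$ degenerates to a constant and the exponent $1-q$ vanishes --- but there every estimate only simplifies.
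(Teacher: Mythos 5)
Your argument is correct and follows essentially the same route as the paper's: both parts (i) and (ii) are read off from $\phi_u''(1)$ after eliminating $B(u)$ via the Nehari identity, then estimated by the embedding constants $C_{1-q}$, $C_{2^*_s}$ (equivalently, the Sobolev constant $S$). The one place where you deviate slightly is the lower bound in (ii): the paper substitutes $\|v\|^2 = \la A(v) + B(v)$ into $\phi_v''(1)$ to get $(1+q)\|v\|^2 < (2^*_s-1+q)B(v)$, whereas you simply drop the nonnegative term $q\la A(v)$ to get $\|v\|^2 < (2^*_s-1)B(v)$; both yield a positive, $\la$-independent lower bound, so this is a harmless variant. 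For the final clause you invoke the already-proved coercivity and boundedness below of $I$ on $\mc N_\la$ (Theorem 3.1), noting $\mc N_\la^\pm\subset\mc N_\la$ --- this is cleaner and more direct than the paper's version, which re-derives pointwise lower bounds for $I$ on each component and then combines them with (i) and (ii).
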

\begin{proof}
\begin{enumerate}
\item[$(i)$] Let $u \in \mc N_{\la}^{+}$. Then we have
\begin{equation*}
\begin{split}
0 & < \phi^{\prime \prime}_u(1) =
(2-2_{s}^{*}) \|u\|^2 + \la (2_{s}^{*}-1+q) \int_{\Om} a(x) |u|^{1-q} dx   \\
 & \leq (2-2_{s}^{*}) \|u\|^2 + \la (2_{s}^{*}-1+q) C_{1-q} \|a\|_{\infty} \|u\|^{1-q} .\\
\end{split}
\end{equation*}
 Thus we obtain
\[ \|u\| \leq \left ( \frac{\la (2_{s}^{*}-1+q) C_{1-q}\|a\|_{\infty} }{2_{s}^{*}-2} \right )^{\frac{1}{1+q}}.\]


\item[$(ii)$] Let $v \in \mc N_{\la}^{-}$. We have
\begin{equation*}
\begin{split}
0 & > \phi^{\prime \prime}_v(1) =
   (1+q) \|v\|^2 - (2_{s}^{*}-1+q) \int_{\Om}|v|^{2_{s}^{*}} dx  \\
 & \geq (1+q) \|v\|^2 - (2_{s}^{*}-1+q) C_{2_{s}^{*}}\|v\|^{2_{s}^{*}}.
\end{split}
\end{equation*}
Thus, we obtain
\[ \|v\| \geq \left ( \frac{1+q}{(2_{s}^{*}-1+q)C_{2_{s}^{*}}}   \right )^{\frac{1}{2_{s}^{*}-2}}\]
which implies that $\inf \{ \|v\| : v \in \mc N_{\la}^{-} \} >0$. If $I(v) \leq M$, similarly we have for $0<q<1$
\[  \frac{(2_{s}^{*}-2)}{2 \times 2_{s}^{*}} \|v\|^2 - \la \left( \frac{2_{s}^{*}-1+q}{2_{s}^{*}(1-q)} \right)  C_{1-q} \|a\|_{\infty} \|v\|^{1-q} \leq M. \]
Now for $q=1$, using $\ln(|v|) \leq |v|$, we obtain
\begin{equation*}
 M  \geq \frac{(2_{s}^{*}-2)}{2 \times 2_{s}^{*}} \|v\|^2 - \la \|a\|_{\infty} C_1 \|v\| + \frac{\la}{2^*_s}\|a\|_{1} \geq \frac{(2_{s}^{*}-2)}{2 \times 2_{s}^{*}} \|v\|^2 - \la \|a\|_{\infty} C_1 \|v\|
\end{equation*}
 which implies  $ \sup \{ \|v\| : v \in \mc N_{\la}^{-} , Iv \leq M\} < \infty$, for each $M > 0$. For $u \in \mc N_{\la}^{+}$, when $0<q<1$ we have
\[ I(u) \geq -\frac{(1+q)}{2(1-q)}  \|u\|^2 - \frac{(2_{s}^{*}-1+q)}{2_{s}^{*}(1-q)} C_{2_{s}^{*}} \|u\|^{2_{s}^{*}}\]
and when $q=1$, we have
\[I(u) \geq \frac{\|u\|^2}{2}- \la \|a\|_{\infty}|\Om|^{\frac{2^*_s-1}{2^*_s}}C_{2^*_s}^{\frac{1}{2^*_s}}\|u\| - \frac{C_{2^*_s}}{2^*_s}\|u\|^{2^*_s}. \]
So, using ($i$) we conclude that $\inf I(\mc N_{\la}^{+}) > - \infty$ and similarly, using ($ii$) we can show that $\inf I(\mc N_{\la}^{-}) > - \infty$.\QED
\end{enumerate}
\end{proof}

\begin{Lemma}\label{le03} Suppose $u$ and $v$ be minimizers of $I$ over $\mc N_{\la}^{+}$ and $\mc N_{\la}^{-}$ respectively. Then for each $ w \in X_{+}$,
\begin{enumerate}
\item  there exists $\epsilon_0 > 0$ such that $I(u +\epsilon w) \geq I(u)$ for each $ \epsilon \in [0, \epsilon_0]$, and
\item $t_{\epsilon} \rightarrow 1$  as $\epsilon \rightarrow 0^+$, where $t_{\epsilon}$ is the unique positive real number satisfying $t_{\epsilon} (v + \epsilon w) \in \mc N_{\la}^{-}.$
\end{enumerate}
\end{Lemma}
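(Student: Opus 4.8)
The plan is to deduce both parts from a stability analysis of the critical points of the fiber maps under the perturbation $\psi\mapsto\psi+\epsilon w$. Recall that, for $\la\in(0,\La)$, every $\psi\in X_{+,q}\setminus\{0\}$ has a fiber map $\phi_\psi$ with exactly two, \emph{non-degenerate}, critical points $0<t_1(\psi)<t_2(\psi)$ --- this follows from Lemma~\ref{lem3.2}, the Corollary above giving $\mc N_\la^{0}=\{0\}$ (so no critical point is degenerate), and the behaviour of $\phi_\psi$ at $0^{+}$ and at $\infty$; consequently $\phi_\psi$ is strictly decreasing on $(0,t_1(\psi))$, strictly increasing on $(t_1(\psi),t_2(\psi))$ and strictly decreasing on $(t_2(\psi),\infty)$, with $t_1(\psi)\psi\in\mc N_\la^{+}$ and $t_2(\psi)\psi\in\mc N_\la^{-}$. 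In particular $t_1(u)=1<t_2(u)$ and $t_1(v)<1=t_2(v)$. As preliminaries I would note that, for $w\in X_{+}$ and $\epsilon\ge0$, one has $u+\epsilon w,\ v+\epsilon w\in X_{+,q}\setminus\{0\}$ (immediate for $0<q<1$; for $q=1$ use $\ln u\le\ln(u+\epsilon w)\le u+\epsilon w$, so $|\ln(u+\epsilon w)|\le|\ln u|+(u+\epsilon w)\in L^{1}(\Om)$), and that, setting $A(\psi)=\int_\Om a|\psi|^{1-q}$ and $B(\psi)=\int_\Om|\psi|^{2^*_s}$, as $\epsilon\to0^{+}$ we have $\|u+\epsilon w\|^{2}\to\|u\|^{2}$ (a polynomial identity in $\epsilon$), $B(u+\epsilon w)\to B(u)$ (since $u+\epsilon w\to u$ in $L^{2^*_s}(\Om)$) and $A(u+\epsilon w)\to A(u)$ (by dominated convergence; $A\equiv\int_\Om a$ when $q=1$), and similarly for $v$. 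Hence the coefficients appearing in $\phi'_\psi$ and $\phi''_\psi$, for $\psi=u+\epsilon w$ or $\psi=v+\epsilon w$, depend continuously on $\epsilon$, and $\phi'_{u+\epsilon w}\to\phi'_u$, $\phi''_{u+\epsilon w}\to\phi''_u$ uniformly on compact subsets of $(0,\infty)$ (and likewise for $v$).

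\smallskip
\noindent\textbf{Part (1).} Write $t_i(\epsilon)=t_i(u+\epsilon w)$. The main point is that $t_2(\epsilon)\to t_2(0)>1$. Indeed, $\phi''_{u+\epsilon w}(t_2(\epsilon))<0$ reads $t_2(\epsilon)^{2^*_s-2}>\frac{(1+q)\|u+\epsilon w\|^{2}}{(2^*_s-1+q)B(u+\epsilon w)}$, whose right-hand side tends to a positive limit, so $t_2(\epsilon)$ stays bounded below; and if $t_2(\epsilon_k)\to\infty$ along some $\epsilon_k\to0^{+}$, then $\phi_{u+\epsilon_k w}(t_2(\epsilon_k))\to-\infty$ by the (uniform in $\epsilon$) growth of $\phi_\psi$ at $\infty$, while $\phi_{u+\epsilon_k w}(t_2(\epsilon_k))\ge\phi_{u+\epsilon_k w}(t_1(\epsilon_k))=I\big(t_1(\epsilon_k)(u+\epsilon_k w)\big)\ge\inf I(\mc N_\la^{+})=I(u)>-\infty$, a contradiction, so $t_2(\epsilon)$ is also bounded above. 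Then, along any $\epsilon_k\to0^{+}$, a subsequential limit $t^{**}\in(0,\infty)$ of $t_2(\epsilon_k)$ satisfies $\phi'_u(t^{**})=0$ and $\phi''_u(t^{**})\le0$ (limits of the defining relations for $t_2(\epsilon_k)$), whence $t^{**}=t_2(0)$ (not $t_1(0)=1$, where $\phi''_u>0$). So $t_2(\epsilon)\to t_2(0)>1$, and there is $\epsilon_0>0$ with $t_2(\epsilon)>1$ for $\epsilon\in[0,\epsilon_0]$; for such $\epsilon$, as $\phi_{u+\epsilon w}$ attains its minimum over $(0,t_2(\epsilon)]$ at $t_1(\epsilon)$ and $1\in(0,t_2(\epsilon))$,
\[ I(u+\epsilon w)=\phi_{u+\epsilon w}(1)\geq\phi_{u+\epsilon w}(t_1(\epsilon))=I\big(t_1(\epsilon)(u+\epsilon w)\big)\geq\inf I(\mc N_\la^{+})=I(u). \]

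\smallskip
\noindent\textbf{Part (2).} Here $t_\epsilon=t_2(v+\epsilon w)$ is precisely the unique positive number with $t_\epsilon(v+\epsilon w)\in\mc N_\la^{-}$, and $t_0=1$. As before, $\phi''_{v+\epsilon w}(t_\epsilon)<0$ gives $t_\epsilon\ge c_0>0$ for small $\epsilon$; moreover $I(t_\epsilon(v+\epsilon w))=\phi_{v+\epsilon w}(t_\epsilon)$ is bounded above by a constant independent of small $\epsilon$ (the supremum over $t\ge c_0$ of the one-variable function obtained by discarding the nonnegative singular term, whose remaining coefficients are bounded), so Lemma~\ref{le01}(ii) bounds $\|t_\epsilon(v+\epsilon w)\|$ and hence $t_\epsilon$. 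Given $\epsilon_k\to0^{+}$, take a subsequence with $t_{\epsilon_k}\to t^{*}\in(0,\infty)$; the limit of $\phi'_{v+\epsilon_k w}(t_{\epsilon_k})=0$ gives $\phi'_v(t^{*})=0$, so $t^{*}\in\{t_1(v),1\}$, while the limit of $\phi''_{v+\epsilon_k w}(t_{\epsilon_k})<0$ gives $\phi''_v(t^{*})\le0$, ruling out $t^{*}=t_1(v)$ (where $\phi''_v>0$). Hence $t^{*}=1$, and since every subsequence of $(t_\epsilon)$ has a further subsequence converging to $1$, we conclude $t_\epsilon\to1$ as $\epsilon\to0^{+}$.

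\smallskip
\noindent\textbf{Main difficulty.} The heart of the matter is the stability of the fiber critical points under the perturbation: showing the relevant multiplier cannot run off to $0$ or $\infty$, and that its limit points are critical points of the limiting fiber map of the correct convexity. Excluding escape to $\infty$ is exactly where minimality of $u$ and $v$ (and Lemma~\ref{le01}(ii)) is used, and excluding the spurious root $t_1$ in the limit is where strict non-degeneracy ($\mc N_\la^{0}=\{0\}$, hence $\phi''_v(t_1(v))>0$) is essential. The singular nonlinearity is harmless here: it enters $\phi'_\psi$ and $\phi''_\psi$ only through the finite, $\epsilon$-continuous quantity $A(\psi)$, so no directional differentiability of $I$ at $u$ or $v$ is needed.
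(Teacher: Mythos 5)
Your proof is correct, but it follows a genuinely different route from the paper's. For part (1), the paper simply observes that $\rho(\epsilon):=\phi''_{u+\epsilon w}(1)$ is continuous in $\epsilon$ with $\rho(0)=\phi''_u(1)>0$, hence $\rho(\epsilon)>0$ on $[0,\epsilon_0]$; since $t\mapsto\phi''_\psi(t)$ is strictly decreasing this immediately yields $1<t_2(u+\epsilon w)$, so $\phi_{u+\epsilon w}(1)\ge\phi_{u+\epsilon w}(t_1(u+\epsilon w))\ge\inf I(\mc N_\la^{+})=I(u)$. You instead obtain $1<t_2(u+\epsilon w)$ by proving $t_2(u+\epsilon w)\to t_2(u)>1$ via a priori bounds and a subsequential-limit argument; this is more laborious but has the merit of reusing exactly the same mechanism as your part (2). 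For part (2), the paper applies the implicit function theorem to $h(t,l_1,l_2,l_3)=l_1t-\la t^{-q}l_2-t^{2^*_s-1}l_3$ at $(1,\|v\|^2,A(v),B(v))$, using $\partial_t h=\phi''_v(1)<0$ to produce a continuous local root $g$ with $t_\epsilon=g(\cdots)\to1$; your compactness argument avoids the IFT and, incidentally, supplies the a priori localisation of $t_\epsilon$ that the IFT route leaves implicit (the IFT identifies $t_\epsilon$ with $g(\cdots)$ only once one knows $t_\epsilon$ lies in the IFT neighbourhood, which is precisely what your trapping argument shows). One small imprecision on your side: for $q=1$ the phrase ``discarding the nonnegative singular term'' is not literally correct, since $-\la\int_\Om a\ln(t|v+\epsilon w|)$ is not sign-definite; the bound still holds because on $[c_0,\infty)$ one has $-\la\ln(t)\|a\|_1\le-\la\ln(c_0)\|a\|_1$ and $-\la\int_\Om a\ln|v+\epsilon w|\to-\la\int_\Om a\ln|v|$. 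A slightly cleaner upper bound on $t_\epsilon$, bypassing Lemma~\ref{le01}(ii), comes from $\phi'_{v+\epsilon w}(t_\epsilon)=0$, i.e.\ $m_{v+\epsilon w}(t_\epsilon)=\la A(v+\epsilon w)$, together with the facts that $m_\psi(t)\to-\infty$ as $t\to\infty$ locally uniformly in the coefficients of $\psi$ and $\la A(v+\epsilon w)\to\la A(v)\ge0$.
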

\begin{proof}
\begin{enumerate}
\item{
Let $w \in X_{+}$ that is $w \in X_0$ and $w \geq 0$. We set
$$\rho(\epsilon) = \|u+\epsilon w\|^2 + \la q \int_{\Om} a(x) |u+\epsilon w|^{1-q}~dx - (2^{*}_{s}-1) \int_{\Om} |u+\epsilon w|^{2^{*}_{s}}$$
for each $\epsilon \geq 0$. Then using continuity of $\rho$ and $\rho(0) = \phi^{\prime \prime}_u(1) >0$, since $u \in \mc{N}^{+}_{\la}$, there exist $\epsilon_0>0 $ such that $\rho(\epsilon)> 0$ for $\epsilon \in [0, \epsilon_0]$. Since for each $\epsilon > 0$, there exists $t_{\epsilon}^{\prime}>0$ such that $t_{\epsilon}^{\prime}(u + \epsilon w) \in \mc{N}^{+}_{\la}$, so $t_{\epsilon}^{\prime} \rightarrow 1$ as $\epsilon \rightarrow 0$ and for each $\epsilon \in [0, \epsilon_0]$, we have
\[ I(u + \epsilon w) \geq I(t_{\epsilon}^{\prime}(u + \epsilon w))\geq \inf I(\mc{N}^{+}_{\la})= I(u).\]
}
\item{
We define $h :(0, \infty)\times \mb R^3 \rightarrow \mb R  $ by
\[ h(t,l_1,l_2,l_3) = l_1t - \la t^{-q}l_2 - t^{2^*_s-1}l_3 \]
for $(t,l_1,l_2,l_3)\in (0, \infty)\times \mb R^3.$ Then, $h$ is a $C^{\infty}$ function. Also, we have
\begin{equation*}
\frac{dh}{dt}(1, \|v\|^2,\int_{\Om}a(x)|v|^{1-q}~dx ,\int_{\Om}|v|^{2^*_s}) = \phi^{\prime \prime}_v(1)<0
\end{equation*}
and for each $\epsilon > 0, \; h(t_{\epsilon}, \|v+\epsilon w\|^2, \int_{\Om}a(x)|v + \epsilon w|^{1-q}~dx ,\int_{\Om} |v|^{2^*_s}) = \phi^{\prime}_{v+\epsilon w}(t_\epsilon)=0$. Moreover,
\[ h(1, \|v\|^2, \int_{\Om}a(x) |v|^{1-q}~dx, \int_{\Om} |v|^{2^*_s}) = \phi^{\prime}_v(1) = 0.\]
Therefore, by implicit function theorem, there exists an open neighborhood $ A \subset (0, \infty)$ and $B \subset \mb R^3$ containing $1$ and $(\|v\|^2,\int_{\Om}a(x) |v|^{1-q}~dx, \int_{\Om}|v|^{2^*_s} )$ respectively such that  for all $y \in B$, $h(t,y) = 0 $ has a unique solution $ t = g(y)\in A $, where $g : B \rightarrow A$ is a continuous function. So, $(\|v+\epsilon w\|^2,\; \int_{\Om}a(x)|v+ \epsilon w|^{1-q}~dx, \int_{\Om}|v+ \epsilon w|^{2^*_s}) \in B$ and
\[ g \left( \|v+\epsilon w)\|^2,\; \int_{\Om}a(x)|v+ \epsilon w|^{1-q}~dx, \int_{\Om}|v+ \epsilon w|^{2^*_s} \right) = t_{\epsilon}  \]
\noi since $h(t_{\epsilon},\|v+\epsilon w)\|^2,\; \int_{\Om}a(x)|v+ \epsilon w|^{1-q}~dx, \int_{\Om}|v+ \epsilon w|^{2^*_s}) = 0$. Thus, by continuity of $g$, we obtain $t_{\epsilon} \rightarrow 1$ as $\epsilon \rightarrow 0^+$.}\QED
\end{enumerate}
\end{proof}

\begin{Lemma} Suppose $u$ and $v$ are minimizers of $I$ on $\mc N_{\la}^{+}$ and $\mc N_{\la}^{-}$ respectively. Then
for each $w \in X_{+}$, we have $u^{-q}w, v^{-q} w \in L^{1}(\Om)$ and
\begin{align}
&\int_Q \frac{(u(x)-u(y))(w(x)-w(y))}{|x-y|^{n+2s}}~ dxdy - \la \int_\Om a(x) u^{-q}w~dx - \int_\Om  u^{2^*_s-1}w  \geq 0, \label{eq4.2}\\
&\int_Q \frac{(v(x)-v(y))(w(x)-w(y))}{|x-y|^{n+2s}} ~dxdy - \la \int_\Om a(x) v^{-q}w~dx - \int_\Om  v^{2^*_s-1}w  \geq 0.\label{eq4.3}\end{align}
\end{Lemma}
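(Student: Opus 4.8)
The plan is to use the minimality of $u$ and $v$ by testing against one‑sided perturbations in the nonnegative direction $w$, dividing the resulting inequality by $\e>0$, and letting $\e\to 0^+$. The only delicate term is the singular one, which I would control through the convexity structure of $G_q$: since $G_q$ is concave and nondecreasing on $[0,\infty)$, for $w\ge 0$ the difference quotients $\e^{-1}\big(G_q(z+\e w)-G_q(z)\big)$ are nonnegative and increase, as $\e\downarrow 0$, to $z^{-q}w\in[0,+\infty]$. (When $q=1$ one first observes that $G_q(u),G_q(v)\in L^1(\Om)$ forces $u,v>0$ a.e.\ in $\Om$, so that $u^{-q}$ and $v^{-q}$ are meaningful.) Monotone convergence applied to this quotient then yields, at one stroke, the integrability of $u^{-q}w$ and $v^{-q}w$ and the two inequalities, the integrability being forced because the remaining, regular, terms have finite limits.

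For $u\in\mc N_\la^{+}$ the argument is direct. By Lemma \ref{le03}(1) there is $\e_0>0$ with $I(u+\e w)\ge I(u)$ for all $\e\in[0,\e_0]$. Expanding $I(u+\e w)-I(u)\ge 0$, dividing by $\e$ and moving the singular term to one side gives, for $\e\in(0,\e_0]$,
\begin{equation*}
\la\int_\Om a(x)\,\frac{G_q(u+\e w)-G_q(u)}{\e}\,dx\ \le\ \frac{\|u+\e w\|^2-\|u\|^2}{2\e}-\frac{1}{2^*_s}\cdot\frac{\int_\Om|u+\e w|^{2^*_s}\,dx-\int_\Om|u|^{2^*_s}\,dx}{\e}.
\end{equation*}
As $\e\to0^+$ the right‑hand side tends to $\int_Q\frac{(u(x)-u(y))(w(x)-w(y))}{|x-y|^{n+2s}}\,dxdy-\int_\Om u^{2^*_s-1}w\,dx$ (differentiability of $t\mapsto\|u+tw\|^2$, and dominated convergence for the critical term since $u,w\in L^{2^*_s}(\Om)$), while the left‑hand side tends to $\la\int_\Om a(x)u^{-q}w\,dx\in[0,+\infty]$ by monotone convergence. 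Since the right‑hand limit is finite and $a\ge\theta>0$, we conclude $u^{-q}w\in L^1(\Om)$, and passing to the limit gives \eqref{eq4.2}.

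For $v\in\mc N_\la^{-}$ the perturbation $v+\e w$ need not belong to $\mc N_\la^{-}$, so I would use the renormalized family $t_\e(v+\e w)\in\mc N_\la^{-}$ provided by Lemma \ref{le03}(2), which satisfies $t_\e\to 1$ as $\e\to0^+$. Since $\phi^{\prime\prime}_v(1)<0$, the point $t=1$ is a strict local maximum of $\phi_v$; hence for $\e$ small $I(t_\e v)=\phi_v(t_\e)\le\phi_v(1)=I(v)$, and combining this with $I(t_\e(v+\e w))\ge\inf I(\mc N_\la^{-})=I(v)$ gives $I(t_\e(v+\e w))-I(t_\e v)\ge 0$. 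Comparing with $I(t_\e v)$ rather than with $I(v)$ is precisely what makes the passage to the limit work without any differentiability of $\e\mapsto t_\e$: in this difference the scaling enters the singular term only through the multiplicative factor $G_q(t_\e(v+\e w))-G_q(t_\e v)=t_\e^{\,1-q}\big(G_q(v+\e w)-G_q(v)\big)$ when $0<q<1$ (and the factor disappears, $G_q(t_\e(v+\e w))-G_q(t_\e v)=G_q(v+\e w)-G_q(v)$, when $q=1$). Dividing $I(t_\e(v+\e w))-I(t_\e v)\ge 0$ by $\e$ and isolating the singular term yields
\begin{equation*}
\la\,t_\e^{\,1-q}\int_\Om a(x)\,\frac{G_q(v+\e w)-G_q(v)}{\e}\,dx\ \le\ \frac{t_\e^{2}}{2}\cdot\frac{\|v+\e w\|^2-\|v\|^2}{\e}-\frac{t_\e^{2^*_s}}{2^*_s}\cdot\frac{\int_\Om|v+\e w|^{2^*_s}\,dx-\int_\Om|v|^{2^*_s}\,dx}{\e}
\end{equation*}
(read $t_\e^{\,1-q}=1$ when $q=1$). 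Letting $\e\to0^+$ and using $t_\e\to1$, the right‑hand side tends to $\int_Q\frac{(v(x)-v(y))(w(x)-w(y))}{|x-y|^{n+2s}}\,dxdy-\int_\Om v^{2^*_s-1}w\,dx$ and the left‑hand side, by monotone convergence, to $\la\int_\Om a(x)v^{-q}w\,dx$; as before this forces $v^{-q}w\in L^1(\Om)$ and gives \eqref{eq4.3}.

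The main obstacle is the $\mc N_\la^{-}$ case: one has to pass to the limit in $\e^{-1}\big(I(t_\e(v+\e w))-I(v)\big)$ without knowing a priori that $v^{-q}w$ is integrable and with only the continuity (not differentiability) of $\e\mapsto t_\e$ at one's disposal. Both difficulties are resolved simultaneously by subtracting $I(t_\e v)$ instead of $I(v)$ — admissible exactly because $1$ is a local maximum of $\phi_v$ — which reduces the $t_\e$‑dependence of the singular term to the harmless factor $t_\e^{\,1-q}\to1$; monotone convergence on the singular difference quotient then both delivers its integrability as a by‑product and allows the inequality to pass to the limit.
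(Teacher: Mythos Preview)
Your proof is correct and follows essentially the same strategy as the paper's: for $u$ you use Lemma~\ref{le03}(1) to get $I(u+\e w)\ge I(u)$, expand the difference quotient, and pass to the limit via monotone convergence on the concave $G_q$-quotients; for $v$ you use the chain $I(t_\e(v+\e w))\ge I(v)\ge I(t_\e v)$ (the first inequality by minimality on $\mc N_\la^-$, the second because $t=1$ is a local maximum of $\phi_v$), compare $I(t_\e(v+\e w))$ with $I(t_\e v)$ so that the $t_\e$-dependence in the singular term reduces to the factor $t_\e^{1-q}\to1$, and conclude by the same monotone convergence argument. This is precisely the paper's approach, with your write-up making the role of the comparison with $I(t_\e v)$ and the monotonicity of the $G_q$-difference quotients a bit more explicit.
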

\begin{proof}
Let $w \in X_{+}$. For sufficiently small $\epsilon > 0$, by lemma \ref{le03},
\begin{equation}\label{eq7}
\begin{split}
0  \leq \frac{I(u+\epsilon w) - I(u)}{\epsilon}
 = & \frac{1}{2\e} (\|u+\epsilon w\|^2- \|u\|^2) - \frac{\la}{\e} \int_{\Om} a(x)(G_q(u + \epsilon w) - G_q(u))~dx \\
& - \frac{1}{\e 2^*_s} \int_{\Om} (|u+\epsilon w|^{2^*_s} - |u|^{2^*_s}) .\\
\end{split}
\end{equation}
We can easily verify that
\begin{enumerate}
\item[($i$)] $\ds \frac{(\|u+ \epsilon w\|^2- \|u\|^2)}{\epsilon} \rightarrow 2\int_{Q} \frac{(u(x)-u(y))(w(x)-w(y))}{|x-y|^{n+2s}}~dxdy\;\; \text{as}\; \epsilon \rightarrow 0^+$,
\item[($ii$)] $\ds \int_{\Om} \frac{(|u+\epsilon w|^{2^*_s}-|u|^{2^*_s})}{\epsilon}  \rightarrow 2^*_s \int_{\Om} |u|^{2^*_s-1} w \;\;\text{as}\; \epsilon \rightarrow 0^+$
\end{enumerate}
which implies that $ a(x) \frac{(G_q(u + \epsilon w) - G_q(u))}{\epsilon} \in L^{1}(\Om)$. Also, for each $x \in \Om,$
$$\frac{G_q(u(x)+\epsilon w(x)) - G_q(u(x))}{\epsilon}=
\left\{
	\begin{array}{ll}
		\frac{1}{\epsilon} \left( \frac{|u+\epsilon w|^{1-q}(x) - |u|^{1-q}(x)}{1-q} \right) & \mbox{if } 0<q<1 \\
		\frac{1}{\epsilon} \left( \ln(|u+\epsilon w|(x)) - ln (|u|(x)) \right) & \mbox{if } q=1
	\end{array}
\right.$$
which increases monotonically as $\epsilon \downarrow 0$ and
$$\lim\limits_{\epsilon \downarrow 0} \frac{G_q(u(x)+\epsilon w(x)) - G_q(u(x))}{\epsilon} =
\left\{
	\begin{array}{ll}
	0 & \mbox{if} \; w(x)=0 \\
	(u(x))^{-q} w(x) & \mbox{if} \; w(x)>0 , u(x) > 0\\
	\infty & \mbox{if}\; w(x) > 0 , u(x) =0.
    \end{array}
\right.$$
So using monotone convergence theorem, we obtain $u^{-q}w \in L^1(\Om)$. Letting $\epsilon \downarrow 0$ in both sides of \eqref{eq7}, we obtain \eqref{eq4.2}.
Next, we will show these properties for $v$. For each $\epsilon >0 $, there exists $t_{\epsilon}>0$ with $t_{\epsilon}(v+\epsilon w) \in \mc N^-_\la$. By lemma \ref{le03}(2), for sufficiently small $\epsilon > 0$, there holds
\[ I(t_{\epsilon}(v+\epsilon w)) \geq I(v) \geq I(t_{\epsilon}v)\]
which implies $I(t_{\epsilon}(v+\epsilon w)) - I(v) \geq 0$ and thus, we have
\[\la  \int_{\Om} a(x) ( G_q(|v+\epsilon w|^{1-q}) - G_q(|v|^{1-q})) dx \leq \frac{t_{\epsilon}^{q}}{2} (\|v+\epsilon w\|^2 - \|v\|^2) - \frac{t^{q+2^*_s}_\epsilon}{ 2^*_s} \int_{\Om}(|v+\epsilon w|^{2^*_s} - |v|^{2^*_s}).\]
As $\epsilon \downarrow 0$, $t_\epsilon \rightarrow 1$. Thus, using similar arguments as above, we obtain $v^{-q}w \in L^1(\Om)$ and \eqref{eq4.3} follows. \QED
\end{proof}

\noi Let $\phi_1>0$ be the eigenfunction of $(-\De)^s$ corresponding to the smallest eigenvalue $\la_1$. Then, $\phi_1 \in L^{\infty}(\Om)$ (see \cite{s3}) and
 \begin{equation*}
 \quad (-\De)^s \phi_1 = \la_1 \phi_1, \quad u>0\; \text{in}\;
\Om,\quad   \phi_1 = 0 \; \mbox{on}\; \mb R^n \setminus\Om.
\end{equation*}
For instance,  here we assume $\|\phi_1\|_{\infty} = 1$. Let $\eta > 0$ be such that $\phi = \eta \phi_1$ satisfies
 \begin{equation} \label{eq3.4} (-\De)^s \phi + \la a(x)\phi^{-q} + \phi^{2^*_s-1} > 0 \end{equation}
 and  $\phi^{2^*_s-1+q} (x) \leq \la a(x) \left(\frac{ q}{2^*_s-1}\right)$, for each $x \in \Om$. Then we have the following Lemma
 \begin{Lemma}\label{le05} Suppose $u$ and $v$ are minimizers of $I$ on $\mc N_{\la}^{+}$ and $\mc N_{\la}^{-}$ respectively. Then
  $u \geq \phi$ and $v\ge \phi.$
 \end{Lemma}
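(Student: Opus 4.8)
The plan is a sub/supersolution comparison carried out at the level of the variational inequalities \eqref{eq4.2}--\eqref{eq4.3}. For $u$ one tests \eqref{eq4.2} with $w=(\phi-u)^+$; this is a legitimate element of $X_+$ because $X_0$ is stable under truncation ($|f^+(x)-f^+(y)|\le|f(x)-f(y)|$ pointwise, and $f^+$ still vanishes outside $\Om$), and by the preceding lemma $u^{-q}(\phi-u)^+\in L^1(\Om)$. One argues symmetrically for $v$ using \eqref{eq4.3} and $v^{-q}(\phi-v)^+\in L^1(\Om)$, so I describe only the case of $u$.

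First I would use that $\phi=\eta\phi_1$ is a weak eigenfunction, so that $\int_Q\frac{(\phi(x)-\phi(y))(w(x)-w(y))}{|x-y|^{n+2s}}\,dxdy=\la_1\int_\Om\phi w\,dx$ for every $w\in X_0$. Combining this identity, evaluated at $w=(\phi-u)^+$, with \eqref{eq4.2} gives
\[
\int_Q\frac{\bigl((\phi-u)(x)-(\phi-u)(y)\bigr)\bigl(w(x)-w(y)\bigr)}{|x-y|^{n+2s}}\,dxdy\;\le\;\int_\Om\Bigl(\la_1\phi-\la a(x)u^{-q}-u^{2^*_s-1}\Bigr)\,w\,dx .
\]
For the left-hand side I would invoke the elementary inequality $(\zeta-\xi)(\zeta^+-\xi^+)\ge(\zeta^+-\xi^+)^2$, valid for all real $\zeta,\xi$, applied to $\zeta=(\phi-u)(x)$, $\xi=(\phi-u)(y)$; integrating against the kernel shows the left-hand side is bounded below by $\|(\phi-u)^+\|^2\ge0$.

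For the right-hand side, observe that $w$ is supported in $\{\phi>u\}$, and that since $u^{-q}w\in L^1(\Om)$ while $\phi>0$ a.e. in $\Om$, necessarily $u>0$ a.e. on $\{\phi>u\}$. On this set $0<u<\phi$, and the key point is that, by the choice of $\eta$, the map $t\mapsto F(t):=\la a(x)t^{-q}+t^{2^*_s-1}$ is non-increasing on $(0,\phi(x)]$: indeed $F'(t)=-q\la a(x)t^{-q-1}+(2^*_s-1)t^{2^*_s-2}\le0$ is equivalent to $t^{2^*_s-1+q}\le\la a(x)\,q/(2^*_s-1)$, which holds for all $t\le\phi(x)$ precisely because $\phi^{2^*_s-1+q}(x)\le\la a(x)\,q/(2^*_s-1)$. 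Hence $F(\phi)\le F(u)$ a.e. on $\{\phi>u\}$; combining this with the subsolution inequality \eqref{eq3.4}, i.e. $\la_1\phi\le F(\phi)$, the integrand satisfies $(\la_1\phi-F(u))\,w\le(F(\phi)-F(u))\,w\le0$ a.e. Therefore $\|(\phi-u)^+\|^2\le0$, so $(\phi-u)^+\equiv0$, which is $u\ge\phi$; the same computation applied to \eqref{eq4.3} yields $v\ge\phi$.

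I expect the main obstacles to be the technical ones: checking that $(\phi-u)^+$ is admissible and that the nonlocal bilinear form satisfies $\langle\phi-u,(\phi-u)^+\rangle\ge\|(\phi-u)^+\|^2$, and establishing $u>0$ a.e. on $\{\phi>u\}$ so that $u^{-q}$ is meaningful there. The genuinely substantive step is the monotonicity of $F$ on $(0,\phi(x)]$, which is exactly what the two conditions imposed on $\eta$ are designed to furnish — $F$ on its own is not monotone, so the comparison would otherwise fail on the critical term $u^{2^*_s-1}$.
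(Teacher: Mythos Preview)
Your argument is correct and follows the same strategy as the paper: test the variational inequality \eqref{eq4.2} against $(\phi-u)^+$, use the eigenfunction identity for $\phi$, bound the nonlocal bilinear form via the pointwise inequality $(\zeta-\xi)(\zeta^+-\xi^+)\ge(\zeta^+-\xi^+)^2$, and conclude via the monotonicity of $t\mapsto\la a(x)t^{-q}+t^{2^*_s-1}$ on $(0,\phi(x)]$ together with the subsolution property of $\phi$. The only noteworthy difference is that the paper routes the computation through an approximating sequence $\{w_k\}$ with compact support (Lemma~\ref{lem2.1}) and passes to the limit, whereas you test directly with $(\phi-u)^+$; since \eqref{eq4.2} is already established for every $w\in X_+$ and the eigenfunction equation pairs with any $w\in X_0$, your direct approach is legitimate and in fact tidier.
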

\begin{proof}
By lemma \ref{lem2.1},  let $\{w_k\}$ be a sequence in $X_0$ such that supp$(w_k)$ is compact, $0 \leq w_k \leq (\phi - u)^+$ for each $k$ and $\{w_k\}$ strongly converges to $(\phi - u)^+$ in $X_0$. Then
for each $x \in \Om$,
\begin{equation}\label{eq3.5}
\frac{d}{dt}(\la a(x)t^{-q}+t^{2^*_s-1})= -q \la a(x)t^{-q-1} +(2^*_s-1)t^{2^*_s-2} \leq 0\end{equation}
if and only if $t^{2^*_s-1+q}\leq \la a(x) \left(\frac{q}{2^*_s-1}\right).$
Using previous lemma and \eqref{eq3.4}, we have
\begin{align*}
 &\left(\int_Q \frac{(u(x)-u(y))(w_k(x)-w_k(y))}{|x-y|^{n+2s}} ~dxdy - \la \int_\Om a(x) u^{-q}w_k~dx - \int_\Om u^{2^*_s-1}w_k \right)\\
 &\quad - \left(\int_Q \frac{(\phi(x)-\phi(y))(w_k(x)-w_k(y))}{|x-y|^{n+2s}}~ dxdy - \la \int_\Om a(x) \phi^{-q}w_k~dx - \int_\Om \phi^{2^*_s-1}w_k\right) \geq 0\end{align*}
which implies
\begin{align*} \int_Q \frac{(u(x)-u(y))- (\phi(x)-\phi(y))}{|x-y|^{n+2s}} [w_k(x) - w_k(y)] ~dxdy
&-  \int_\Om (\la a(x)u^{-q} + u^{2^*_s-1})w_k~dx \\
&+ \int_{\Om} (\la a(x)\phi^{-q} +\phi^{2^*_s-1})w_k~dx \geq 0.\end{align*}
Using the strong convergence, we assume $\{w_k\}$ converges to $(\phi - u)^+$ pointwise almost everywhere in $\Om$ and we write $w_k(x) = (\phi - u)^+(x)+ o(1)$ as $k \rightarrow \infty$. Consider,
\begin{equation*}
\begin{split}
\int_Q &\frac{((u-\phi)(x)-(u - \phi)(y))}{|x-y|^{n+2s}} [w_k(x) - w_k(y)] ~dxdy\\
& = \int_Q \frac{((u-\phi)(x)-(u - \phi)(y))}{|x-y|^{n+2s}} ((\phi - u)^+(x) - (\phi - u)^+(y)) ~dxdy \\
&\quad \quad + o(1)\int_Q \frac{((u-\phi)(x)-(u - \phi)(y))}{|x-y|^{n+2s}}~dxdy
\end{split}
\end{equation*}
where we can see that
{\small
\begin{equation*}
\begin{split}
& \int_Q \frac{((u-\phi)(x)-(u - \phi)(y))}{|x-y|^{n+2s}} ((\phi - u)^+(x) - (\phi - u)^+(y)) ~dxdy\\
& = \int_Q \frac{-(\phi-u)(x)+(\phi-u)(y)}{|x-y|^{n+2s}}((\phi - u)^+(x) - (\phi - u)^+(y)) ~dxdy\\
& = \int_Q \frac{((\phi-u)^{+}-(\phi-u)^{-})(y)-((\phi-u)^{+}-(\phi-u)^{-})(x)}{|x-y|^{n+2s}}((\phi-u)^{+}(x)-(\phi-u)^{+}(y))~dxdy\\
& \leq -\int_Q \frac{|(\phi-u)^{+}(y)|^{2}+|(\phi-u)^{+}(x)|^2+(\phi-u)^{-}(y)(\phi-u)^{+}(x)+(\phi-u)^{-}(x)(\phi-u)^{+}(y)}{|x-y|^{n+2s}}~dxdy\\
& = - \int_Q \frac{|(\phi-u)^{+}(x) - (\phi-u)^{+}(y)|^2}{|x-y|^{n+2s}}~dxdy = - \| (\phi - u)^{+}\|^{2}.\\
\end{split}
\end{equation*}}
\noi Since $\phi^{2^*_s-1+q} (x) \leq \la a(x) \left(\frac{q}{2^*_s-1}\right)$ for each $x \in \Om$, using \eqref{eq3.5} we obtain
\begin{equation*}
\begin{split}
& \int_\Om ((\la a(x)u^{-q} + u^{2^*_s-1})- (\la a(x)\phi^{-q} + \phi^{2^*_s-1}))w_k~dx \\
& = \int_{\Om \cap \{\phi \geq u\}} ((\la a(x)u^{-q} + u^{2^*_s-1})- (\la a(x)\phi^{-q} + \phi^{2^*_s-1}))(\phi-u)^{+}(x)~dx + o(1) \geq 0.\\
 \end{split}
\end{equation*}
This implies
 \begin{align*}
 0 &\leq -\|(\phi-u )^+\|^2 -\int_\Om (\la a(x)u^{-q} + u^{2^*_s-1})w_k~dx + \int_{\Om} (\la a(x)\phi^{-q} + \phi^{2^*_s-1})w_k~dx +o(1)\\
  &\leq -\|(\phi-u )^+\|^2 + o(1)
  \end{align*}
and letting $k \rightarrow \infty$, we obtain
$-\|(\phi-u )^+\|^2 \geq 0.$
Thus, we showed $u \geq \phi$. Similarly, we can show $v\ge \phi.$ \QED
\end{proof}

\section{Existence of minimizer on $\mc N_{\la}^{+}$ }
In this section, we will show that the minimum of $I$ is achieved in $\mc N_{\la}^{+}.$ Also, we show that this minimizer is also the first solution of $(P_\la).$
\begin{Proposition}
For all $\la\in (0,\La)$, there exist $u_\la \in \mc N_{\la}^{+}$ satisfying $I(u_\la) = \inf\limits_{u\in \mc N_{\la}^{+}} I(u)$.
\end{Proposition}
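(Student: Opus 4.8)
\noi\textbf{Proof proposal.} The plan is to run the direct method on $\mc N_\la^+$. Set $c_+:=\inf_{u\in\mc N_\la^+}I(u)$. By Lemma \ref{le01} we have $c_+>-\infty$, and I would first check $c_+<0$: for $\la<\La$, Lemma \ref{lem3.2} (extended to $(0,\La)$ via Theorem \ref{thm3.2}) attaches to every $w\in X_{+,q}\setminus\{0\}$ a unique $t_1(w)>0$ with $t_1(w)w\in\mc N_\la^+$, namely the first, local-minimum critical point of $\phi_w$; since for $0<q<1$ one has $\phi_w(0^+)=0$ with $\phi_w$ strictly decreasing on $(0,t_1(w))$, this gives $I(t_1(w)w)=\phi_w(t_1(w))<0$ (for $q=1$ this inference is replaced by the fact that $G_1$ is unbounded below at the origin). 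Then I would take a minimizing sequence $\{u_k\}\subset\mc N_\la^+$, $I(u_k)\to c_+$; it is bounded in $X_0$ by Lemma \ref{le01}$(i)$, so along a subsequence $u_k\rightharpoonup u_\la$ in $X_0$, $u_k\to u_\la$ in $L^r(\Om)$ for every $r<2^*_s$ and a.e.\ in $\Om$, and $u_\la\ge0$.

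The first substantive point is that $u_\la\not\equiv0$. If $u_\la\equiv0$ and $0<q<1$, then $\int_\Om a|u_k|^{1-q}\to0$ by compact embedding; combining $\langle I'(u_k),u_k\rangle=0$ with $\phi_{u_k}''(1)>0$ forces $(2^*_s-2)\int_\Om|u_k|^{2^*_s}<(1+q)\la\int_\Om a|u_k|^{1-q}\to0$, whence $\|u_k\|^2\to0$ and $I(u_k)\to0$, contradicting $c_+<0$. If $u_\la\equiv0$ and $q=1$, then $u_k\to0$ in $L^1(\Om)$ gives $\int_\Om a(\ln|u_k|)^+\le\int_\Om a|u_k|\to0$, while $u_k\to0$ a.e.\ and Fatou force $\int_\Om a(\ln|u_k|)^-\to+\infty$ (using $a\ge\theta>0$), so $I(u_k)\to+\infty$, again impossible.

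The main difficulty, and the step I expect to be hardest, is to prove $u_k\to u_\la$ strongly in $X_0$: because of the critical term $I$ is not weakly lower semicontinuous, so loss of compactness through concentration must be excluded. Write $v_k:=u_k-u_\la\rightharpoonup0$. By the Brezis--Lieb lemma, $\|u_k\|^2=\|u_\la\|^2+\|v_k\|^2+o(1)$ and $\int_\Om|u_k|^{2^*_s}=\int_\Om|u_\la|^{2^*_s}+\int_\Om|v_k|^{2^*_s}+o(1)$, while $\int_\Om aG_q(u_k)\to\int_\Om aG_q(u_\la)$ (immediate for $0<q<1$; for $q=1$ some care with $(\ln|u_k|)^-$ is needed, but $\int_\Om a(\ln|u_k|)^-$ stays bounded since $I(u_k)$ does). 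Passing to a further subsequence, let $\|v_k\|^2\to d\ge0$ and $\int_\Om|v_k|^{2^*_s}\to e\ge0$; the Sobolev inequality gives $Se^{2/2^*_s}\le d$, so $e>0$ whenever $d>0$. Letting $k\to\infty$ in $\langle I'(u_k),u_k\rangle=0$, in $\phi_{u_k}''(1)>0$ and in $I(u_k)\to c_+$ yields
\[
\phi_{u_\la}'(1)=e-d,\qquad \phi_{u_\la}''(1)\ge(2^*_s-1)e-d,\qquad I(u_\la)=c_+-\tfrac d2+\tfrac e{2^*_s}.
\]
Since $u_\la\in X_{+,q}\setminus\{0\}$, Lemma \ref{lem3.2} provides $\tau:=t_1(u_\la)$ with $\tau u_\la\in\mc N_\la^+$, so $\phi_{u_\la}(\tau)\ge c_+$. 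Assuming $d>0$, I would distinguish cases according to the sign of $\phi_{u_\la}'(1)=e-d$ and the position of $1$ among the critical points $t_1(u_\la)<t_2(u_\la)$ of $\phi_{u_\la}$, using that $\phi_{u_\la}$ is decreasing on $(0,t_1(u_\la))$ and that $\phi_{u_\la}(t_1(u_\la))=\min_{t\in(0,t_2(u_\la)]}\phi_{u_\la}(t)$; together with the three displayed identities, $Se^{2/2^*_s}\le d$ and $d\le\sup_{\mc N_\la^+}\|\cdot\|^2$, this should force $d$ to be at least a fixed positive multiple of $S^{n/2s}$, and checking that $\sup_{\mc N_\la^+}\|\cdot\|^2$ stays below that multiple for $\la<\La$ (which is exactly where the definition of $\La$ enters) gives the contradiction. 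Hence $d=0$ and $u_k\to u_\la$ strongly.

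With strong convergence in hand, $\langle I'(u_\la),u_\la\rangle=0$ and $I(u_\la)=c_+$, so $u_\la\in\mc N_\la$; since $\mc N_\la^0=\{0\}$ for $\la\in(0,\La)$ (the Corollary above) and $u_\la\not\equiv0$, either $u_\la\in\mc N_\la^+$ or $u_\la\in\mc N_\la^-$. In the latter case $t_1(u_\la)<1$ and $\phi_{u_\la}(t_1(u_\la))<\phi_{u_\la}(1)=c_+$, contradicting $t_1(u_\la)u_\la\in\mc N_\la^+$. Therefore $u_\la\in\mc N_\la^+$ and $I(u_\la)=\inf_{\mc N_\la^+}I$, as claimed. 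The only genuinely delicate points are the compactness step above and, as a secondary technical matter, the limit of the logarithmic nonlinearity when $q=1$.
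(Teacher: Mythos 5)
Your overall strategy is exactly the paper's: take a minimizing sequence in $\mc N_\la^+$, show $u_\la\not\equiv0$ (energy estimate for $0<q<1$, Fatou for $q=1$), then a Brezis--Lieb decomposition and a case analysis on the position of $1$ relative to $t_1(u_\la),t_2(u_\la)$ and on the sign of the defect $c^2/2-d^{2^*_s}/2^*_s$. So you have identified the right route.

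The difficulty is that the decisive step is left as a sketch. You write that ``this should force $d$ to be at least a fixed positive multiple of $S^{n/2s}$, and checking that $\sup_{\mc N_\la^+}\|\cdot\|^2$ stays below that multiple \dots gives the contradiction.'' That is one mechanism, and it covers only one of the cases. The paper's actual argument splits into three distinct cases, each killed by a \emph{different} device: when $t_2<1$, the contradiction comes from showing the auxiliary function $h(t)=\phi_{u_\la}(t)+\tfrac{c^2t^2}{2}-\tfrac{d^{2^*_s}t^{2^*_s}}{2^*_s}$ is increasing on $[t_2,1]$, which forces $\inf I(\mc N_\la^+)>\phi_{u_\la}(t_1)\ge\inf I(\mc N_\la^+)$; when $t_2\ge1$ and the defect is negative, the argument is the norm-size comparison you gesture at; and when $t_2\ge1$ with nonnegative defect, one gets a chain of inequalities that must all be equalities, forcing $t_1=1$ and $c=0$. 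None of these is automatic, and your sketch does not exhibit the argument that makes the contradiction actually fire in each case. In particular, $d\le\sup_{\mc N_\la^+}\|\cdot\|^2$ by itself is useless without the lower bound on $d$ coming from the specific case hypotheses, and the lower bound on $d$ does not come for free from $Se^{2/2^*_s}\le d$ alone. Without filling in this case analysis the proof is incomplete.

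Two smaller points. First, your justification that $c_+<0$ in the case $q=1$ is wrong: since $G_1(x)=\ln|x|$ enters $I$ with a minus sign, $\phi_u(t)\to+\infty$ as $t\to0^+$, so ``$G_1$ unbounded below at the origin'' pushes the energy \emph{up}, not down, and gives no sign information on $\phi_u(t_1)$. Fortunately you (like the paper) do not actually use $c_+<0$ in the $q=1$ branch, but the stated reason should be dropped. Second, the inference ``$Se^{2/2^*_s}\le d$, so $e>0$ whenever $d>0$'' is backwards: the inequality shows $e>0$ \emph{implies} $d>0$, not the other way around; this needs to be corrected or removed before it infects the case analysis.
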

\begin{proof}
Assume $0<q\leq1$ and $\la \in (0, \Lambda)$. Let $\{u_{k}\} \subset \mc N_{\la}^{+}$ be a sequence such that $I(u_{k}) \rightarrow \inf\limits_{u\in \mc N_{\la}^{+}} I(u)$ as $k \rightarrow \infty$. Using lemma \ref{le01}, we can assume that there exist $u_\la$ such that $u_{k} \rightharpoonup u_\la$ weakly  in $X_0$. First we will show that $\inf\limits_{u\in \mc N_{\la}^{+}} I(u) < 0$. Let $u_0 \in \mc N_{\la}^{+}$, then we have $\phi^{\prime \prime}_{u_0}(1) >0$ which gives
\[ \left( \frac{1+q}{2^{*}_{s}-1+q}  \right)\|u_0\|^2 > \int_{\Om}|u_0|^{2^{*}_{s}} dx .\]
Therefore, using $2^{*}_{s}-1>1$ we obtain
\begin{equation*}
\begin{split}
I(u_0) & = \left( \frac12 - \frac{1}{1-q} \right) \|u_0\|^2 + \left( \frac{1}{1-q} - \frac{1}{2^{*}_{s}} \right)\int_{\Om} |u_0|^{2^{*}_{s}}dx\\
& \leq -  \frac{(1+q)}{2(1-q)} \|u_0\|^2 + \frac{(1+q)}{2^{*}_{s}(1-q)} \|u_0\|^2= \left( \frac{1}{2^*_s}-\frac{1}{2}\right)\left(\frac{1+q}{1-q}\right) \|u_0\|^2<0.\\
\end{split}
\end{equation*}\\
We set $w_k := (u_k - u_\la)$ and claim that $u_k \rightarrow u_\la$ strongly in $X_0$. Suppose $\|w_k\|^2 \rightarrow c^2 \neq 0$ and $\int_{\Om} |w_k|^{2^*_s}dx \rightarrow d^{2^*_s}$ as $k \rightarrow \infty$. Since $u_k \in \mc N^+_{\la}$, using Brezis-Lieb we obtain
\begin{equation}\label{eq8}
0  = \lim_{k \rightarrow \infty} \phi^{\prime}_{u_k}(1) = \phi^{\prime}_{u_\la}(1)+c^2 -d^{2^*_s}
\end{equation}
which implies
$$ \|u_\la\|^2+c^2 = \la \int_{\Om}a(x)|u_\la|^{1-q}dx + \int_{\Om}|u_k|^{2^*_s}dx +d^{2^*_s}.$$
We claim that $u_\la \in X_{+,q}$. Suppose $u_\la\equiv 0$. If $0<q<1$ and $c=0$ then $0 > \inf I(\mc N^+_{\la}) = I(0)=0$, which is a contradiction and if $c\neq 0$ then
\begin{equation}\label{eq4.2new}
\inf\limits_{u\in \mc N_{\la}^{+}} I(u)= I(0)+\frac{c^2}{2} - \frac{d^{2^*_s}}{2^*_s} = \frac{c^2}{2} - \frac{d^{2^*_s}}{2^*_s} .\end{equation}
But we have $\|u_k\|^2_{2^*_s}S \leq \|u_k\|^2$ which gives $c^2 \geq Sd^2 $. Also from \eqref{eq8}, we have $c^2=d^{2^*_s}$. Then \eqref{eq4.2new} implies
\[ 0 > \inf\limits_{u\in \mc N_{\la}^{+}} I(u) = \left(\frac{1}{2}-\frac{1}{2^*_s}\right)c^2 \geq \frac{s}{n}S^{\frac{n}{2s}},\]
which is again a contradiction. In the case $q = 1$, the sequence $\left \{ \int_{\Om} \ln(|u_{k}|) \right \}$ is bounded, since the sequence $\{I(u_{k})\}$ and $\{\|u_{k}\|\}$ is bounded, using Fatou's lemma and  for each $k, \; \ln(|u_{k}|) \leq u_{k},$ we obtain
\[ -\infty < \overline{\lim_{k \rightarrow \infty}} \int_{\Om} \ln(|u_{k}|) dx \leq  \int_{\Om} \overline{\lim_{k \rightarrow \infty}} \ln(|u_{k}|)dx =  \int_{\Om} \ln(|u_\la|)dx. \]
which implies $u_\la \not\equiv 0$. Thus, in both cases we have shown that $u_\la \in X_{+,q}$. So, there exists $0 < t_1 < t_2$ such that $\phi^{\prime}_{u_{\la}}(t_1)= \phi^{\prime}_{u_{\la}}(t_2) = 0$ and $t_{1}u_{\la} \in \mc N^{+}_{\la}$. Then, three cases arise: \\
(i) $t_2 < 1$,\\
(ii) $t_2 \geq 1$ and $\frac{c^2}{2}- \frac{d^{2^*_s}}{2^*_s} < 0$, and \\
(iii) $t_2 \geq 1$ and $\frac{c^2}{2}- \frac{d^{2^*_s}}{2^*_s} \geq 0$.\\
\noi Case (i) Let $h(t) = \phi_{u_{\la}}(t)+ \frac{c^2t^2}{2} - \frac{d^{2^*_s}t^{2^*_s}}{2^*_s}$, $t >0$. By \eqref{eq8}, we obtain $ h^{\prime}(1) = \phi^{\prime}_{u_{\la}}(1)+c^2-d^{2^*_s} = 0$ and
\begin{equation*}
 h^{\prime}(t_2) = \phi^{\prime}_{u_{\la}}(1)+t_2c^2-{t_2}^{2^*_s-1}d^{2^*_s} = {t_2}(c^2 - {t_2}^{2^*_s-2}d^{2^*_s}) > {t_2}(c^2 - d^{2^*_s})
 > 0
\end{equation*}
which implies that $h$ increases on $[t_2,1]$. Then we obtain
\begin{equation*}
\begin{split}
\inf\limits_{u\in \mc N_{\la}^{+}} I(u) &= \lim I(u_k) \geq \phi_{u_{\la}}(1) + \frac{c^2}{2}- \frac{d^{2^*_s}}{2^*_s}  = h(1) > h(t_2)\\
& =\phi_{u_{\la}}(t_2) + \frac{c^2t_{2}^{2}}{2}- \frac{d^{2^*_s}t_{2}^{2^*_s}}{2^*_s} \geq \phi_{u_{\la}}(t_2) + \frac{t_{2}^{2}}{2} (c^2 - d^{2^*_s})\\
& > \phi_{u_{\la}}(t_2) > \phi_{u_{\la}}(t_1) \geq \inf\limits_{u\in \mc N_{\la}^{+}} I(u),
\end{split}
\end{equation*}
which is a contradiction.\\
\noi Case (ii) In this case, since $\la \in (0, \Lambda)$, we have $(c^2/2 - d^{2^*_s}/{2^*_s}) < 0$ and $Sd^2 \leq c^2$. Also we see that, for each $u_0 \in \mc N^{+}_{\la}$
\begin{equation*}
\begin{split}
0  < \phi^{\prime \prime}_{u_0}(1)& = \|u_0\|^2 + q \la \int_{\Om} a(x)|u_0|^{1-q} dx - (2^*_s-1) \int_{\Om}|u_0|^{1+p}dx \\
 & = (1+q) \|u_0\|^2 +  (-q-2^*_s+1) \int_{\Om} |u_0|^{2^*_s} dx  \\
\end{split}
\end{equation*}
\noi  which implies $ (1+q)\|u_0\|^2  > (q+2^*_s-1)\int_{\Om}|u_0|^{2^*_s}dx =  (q+2^*_s-1)\|u_0\|^{2^*_s}_{{2^*_s}}dx $ \\
\noi or, $ C_{2^*_s}  \leq \left(\frac{1+q}{q+2^*_s-1}\right) \|u_0\|^{2-2^*_s}$
\noi or, $ \|u_0\|^{2}  \leq  \left(\frac{1+q}{q+2^*_s-1} \right )^{\frac{2}{2^*_s-2}} S^{\frac{2^*_s}{2^*_s-2}}$. Thus, we have
\[ \sup\{ \|u\|^2: u \in \mc N^+_{\la}\} \leq \left(\frac{2}{2^*_s}\right)^{\frac{2}{2^*_s-2}} S^{\frac{2^*_s}{2^*_s-2}} < c^2 \leq \sup\{ \|u\|^2: u \in \mc N^+_{\la}\},\]
which gives a contradiction. Consequently, in case (iii)  we have
$$ \inf\limits_{u\in \mc N_{\la}^{+}} I(u) = I(u_\la)+\frac{c^2}{2}-\frac{d^{2^*_s}}{2^*_s} \geq I(u_\la) = \phi_{u_\la}(1) \geq \phi_{u_\la}(t_1) \geq \inf I(\mc N^+_{\la}) .$$
Clearly, this holds when $t_1 = 1$ and $(c^2/2 - d^{2^*_s}/{2^*_s}) = 0$ which yields $c=0$ and $u_\la \in \mc N^+_{\la}$. Thus, $u_k \rightarrow u_\la$ strongly in $X_0$ and $I(u_\la) = \inf\limits_{u\in \mc N_{\la}^{+}} I(u)$. \QED
\end{proof}
\begin{Proposition}\label{prp4.2}
$u_\la$  is a positive weak solution of $(P_\la)$.
\end{Proposition}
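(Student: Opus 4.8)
The plan is to upgrade the one-sided inequality \eqref{eq4.2} --- which holds for every $w \in X_+$ by the preceding lemma --- to the two-sided identity in the Definition of weak solution, by testing it against the admissible perturbations $w = u_\la \pm \epsilon\psi$, with $\psi \in C^\infty_c(\Om)$ and $\epsilon > 0$ small, and then dividing by $\epsilon$ and letting $\epsilon\to 0^+$.

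First I would collect the facts already available. Since $u_\la \in \mc N_\la^+ \subset \mc N_\la \subset X_{+,q}$ we have $u_\la \geq 0$, $u_\la \not\equiv 0$, and $\phi'_{u_\la}(1) = 0$, i.e.
\[ \|u_\la\|^2 = \la\int_\Om a(x)\,u_\la^{1-q}\,dx + \int_\Om u_\la^{2^*_s}\,dx . \]
By Lemma \ref{le05}, $u_\la \geq \phi = \eta\phi_1$ in $\Om$; since $\phi_1$ is continuous and strictly positive in $\Om$, for each fixed $\psi \in C^\infty_c(\Om)$ the quantity $m := \inf_{\operatorname{supp}\psi} u_\la$ is positive. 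Hence $u_\la^{-q}$ is bounded on $\operatorname{supp}\psi$, so $u_\la^{-q}\psi \in L^\infty(\Om)$ has compact support and in particular lies in $L^1(\Om)$, while $u_\la^{2^*_s-1}\psi \in L^1(\Om)$ by H\"older since $u_\la \in L^{2^*_s}(\Om)$; moreover for every $\epsilon$ with $0 < \epsilon < m/(1 + \|\psi\|_\infty)$ one has $u_\la \pm \epsilon\psi \geq 0$ on $\Om$, so $u_\la \pm \epsilon\psi \in X_+$.

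Next, substituting $w = u_\la + \epsilon\psi$ into \eqref{eq4.2} and expanding the bilinear form and the two integrals by (bi)linearity, the $\epsilon$-free terms add up to $\|u_\la\|^2 - \la\int_\Om a\,u_\la^{1-q}\,dx - \int_\Om u_\la^{2^*_s}\,dx$, which vanishes by the Nehari identity above; what remains is
\[ \epsilon\left( \int_Q \frac{(u_\la(x)-u_\la(y))(\psi(x)-\psi(y))}{|x-y|^{n+2s}}\,dxdy - \la\int_\Om a(x)\,u_\la^{-q}\psi\,dx - \int_\Om u_\la^{2^*_s-1}\psi\,dx \right) \geq 0 . \]
Dividing by $\epsilon > 0$ gives the inequality ``$\geq 0$''. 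Repeating the computation with $w = u_\la - \epsilon\psi$ (equivalently, replacing $\psi$ by $-\psi$, which is again admissible) gives ``$\leq 0$''. Therefore equality holds for every $\psi \in C^\infty_c(\Om)$, which is precisely the weak formulation; together with $u_\la \in X_0$ and $u_\la \geq \phi > 0$ in $\Om$, this shows $u_\la$ is a positive weak solution of $(P_\la)$.

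The step requiring the most care is the admissibility of the perturbations, i.e.\ that $u_\la \pm \epsilon\psi$ is nonnegative for small $\epsilon$ and that the singular term is integrable against $\psi$; both rest entirely on the barrier estimate $u_\la \geq \phi$ of Lemma \ref{le05} and the interior positivity of $\phi_1$. Without such a lower bound one would be forced to test \eqref{eq4.2} with the truncations $w = (u_\la + \epsilon\psi)^+$, decompose $(u_\la+\epsilon\psi)^+ = (u_\la+\epsilon\psi) + (u_\la+\epsilon\psi)^-$, discard the negative part using $\langle g, g^-\rangle \leq -\|g^-\|^2 \leq 0$, and separately estimate the remaining error as $o(\epsilon)$; the a priori bound makes all of this unnecessary, and the rest of the argument is routine bookkeeping.
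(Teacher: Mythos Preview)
Your proof is correct. The overall architecture matches the paper's: both arguments hinge on the barrier estimate $u_\la \geq \phi$ from Lemma~\ref{le05}, which guarantees that $u_\la \pm \epsilon\psi \in X_+$ for small $\epsilon$ and makes the singular term $u_\la^{-q}\psi$ integrable. The execution differs slightly, however. The paper re-invokes the argument of Lemma~\ref{le03}(1) to show directly that $I(u_\la + \epsilon\psi) \geq I(u_\la)$ for small $\epsilon$, and then computes the one-sided derivative $\lim_{\epsilon\to 0^+}\epsilon^{-1}(I(u_\la+\epsilon\psi)-I(u_\la))$; arbitrariness of the sign of $\psi$ then gives equality. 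You instead feed $w = u_\la \pm \epsilon\psi$ into the already-established linear inequality~\eqref{eq4.2} and cancel the $\epsilon$-free part via the Nehari identity $\phi'_{u_\la}(1)=0$. Your route is a shade cleaner: since \eqref{eq4.2} is linear in $w$, the substitution is pure algebra and you avoid re-differentiating the singular term $G_q$; the paper's route is more self-contained in that it goes straight back to the minimality of $u_\la$ without invoking \eqref{eq4.2}. Either way the substance is the same.
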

\begin{proof}
Let $\psi \in C^{\infty}_c(\Om)$. By lemma \ref{le05}, since $\phi > 0$, we can find $ \alpha >0$ such that $u_\la \geq \alpha$ on support of $\psi$. Then $u+\e \psi\ge0$ for small $\e$.  With similar reasoning as  in the proof of lemma \ref{le03}, $ I(u_\la+\epsilon \psi) \geq I(u_\la)$ for sufficiently small $\epsilon >0$. Then we have
\begin{equation*}
\begin{split}
0 & \leq \lim \limits_{\epsilon \rightarrow 0} \frac{I(u_\la+\epsilon\psi) - I(u_\la)}{\epsilon}\\
&= \int_Q \frac{(u_\la(x)-u_\la(y)) (\psi(x)-\psi(y))}{|x-y|^{n+2s}}~ dxdy - \la \int_\Om a(x) u_{\la}^{-q}\psi dx - \int_\Om u_{\la}^{2^*_s-1}\psi ~dx.
\end{split}
\end{equation*}
Since $\psi \in C^{\infty}_c(\Om)$ is arbitrary, we conclude that $u_\la$ is a positive weak solution of $(P_\la)$.\QED
\end{proof}
As a consequence, we have
\begin{Lemma}\label{lem4.3}
$\La <\infty.$
\end{Lemma}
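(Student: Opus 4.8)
The plan is to argue by contradiction. Suppose $\La=\infty$. Then for every $\la>0$ the parameter $\la$ lies in $(0,\La)$, so by the previous proposition $\mc N_\la^{+}$ has a minimizer $u_\la$ which, by Proposition \ref{prp4.2}, is a positive weak solution of $(P_\la)$, and by Lemma \ref{le05} satisfies $u_\la\ge\phi=\eta\phi_1$ on $\Om$. I will test against $\phi_1$ and obtain an upper bound for $\la_1$ that is violated once $\la$ is large.

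First I would record an elementary pointwise inequality. Since $a(x)\ge\theta>0$, minimizing the function $t\mapsto t^{-1}\big(\la a(x)t^{-q}+t^{2^*_s-1}\big)=\la a(x)t^{-q-1}+t^{2^*_s-2}$ over $t>0$ (a minimum is attained because $2^*_s-2>0$) produces a value proportional to $(\la\theta)^{(2^*_s-2)/(2^*_s-1+q)}$, hence there is $\kappa(\la)>0$ with $\kappa(\la)\to\infty$ as $\la\to\infty$ such that
\[ \la a(x)\,t^{-q}+t^{2^*_s-1}\ \ge\ \kappa(\la)\,t\qquad\text{for all }t>0,\ x\in\Om. \]

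Next, fix $\la>0$. Because $u_\la\ge\eta\phi_1$ on $\Om$ and $\phi_1\in L^{\infty}(\Om)$ with $0<q\le1$, we have $u_\la^{-q}\phi_1\le\eta^{-q}\phi_1^{1-q}\in L^{\infty}(\Om)$, so all integrals below are finite. Taking $w=\phi_1\in X_{+}$ in \eqref{eq4.2}, and using that $\phi_1$ solves the eigenvalue problem weakly so that
\[ \int_Q\frac{(u_\la(x)-u_\la(y))(\phi_1(x)-\phi_1(y))}{|x-y|^{n+2s}}\,dxdy=\la_1\int_\Om u_\la\phi_1\,dx \]
(valid since $u_\la\in X_0$ is an admissible test function), I obtain
\[ \la_1\int_\Om u_\la\phi_1\,dx\ \ge\ \int_\Om\big(\la a(x)u_\la^{-q}+u_\la^{2^*_s-1}\big)\phi_1\,dx\ \ge\ \kappa(\la)\int_\Om u_\la\phi_1\,dx. \]
Since $u_\la>0$ and $\phi_1>0$ on $\Om$, $\int_\Om u_\la\phi_1\,dx>0$, whence $\la_1\ge\kappa(\la)$; choosing $\la$ so large that $\kappa(\la)>\la_1$ yields a contradiction, and therefore $\La<\infty$.

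The only step requiring genuine care is the legitimacy of pairing with $\phi_1$: this is precisely why I would route the argument through the variational inequality \eqref{eq4.2} (which already holds for every $w\in X_{+}$, and $\phi_1\in X_{+}$) together with the pointwise lower bound $u_\la\ge\eta\phi_1$ of Lemma \ref{le05}, the latter being exactly what guarantees integrability of $u_\la^{-q}\phi_1$. Everything else — the scaling computation giving $\kappa(\la)$ and its divergence as $\la\to\infty$, and the positivity of $\int_\Om u_\la\phi_1$ — is routine.
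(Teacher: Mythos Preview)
Your proof is correct and follows essentially the same line as the paper's: exploit the existence of a positive solution $u_\la$ for every $\la\in(0,\La)$, pair with the first eigenfunction $\phi_1$, and use the elementary pointwise bound $\la a(x)t^{-q}+t^{2^*_s-1}\ge\kappa(\la)\,t$ with $\kappa(\la)\to\infty$ to force $\la_1\ge\kappa(\la)$, which caps $\la$.

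The only noteworthy difference is technical: the paper inserts $\phi_1$ directly into the weak equation (an equality), whereas you go through the variational inequality \eqref{eq4.2}. Your choice is arguably the cleaner one at this point in the paper, since the weak formulation in Definition~2.1 is stated only for $\psi\in C^\infty_c(\Om)$ and its extension to $\phi_1\in X_0$ is justified only later (Lemma~6.1), while \eqref{eq4.2} is already available for every $w\in X_+$. Incidentally, the integrability of $u_\la^{-q}\phi_1$ that you secure via Lemma~\ref{le05} is also part of the conclusion of the lemma containing \eqref{eq4.2}, so that step could be shortened.
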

\begin{proof}
Taking $\phi_1$ as the test function in $(P_\la)$, we obtain
\[\la_1 \int_\Om u \phi_1 = \int_{\Om} (\la u^{-q}+u^{2_{s}^{*}-1})\phi_1.\]
Let $\mu^*>0$ be  such that $\mu^* t^{-q} + t^{2_{s}^{*}-1} > (\la_1 +\e) t$, for all $t>0$. Then we obtain $\la <\mu^*$ and the proof follows. \QED
\end{proof}
\section{Existence of minimizer on $\mc N^{-}_{\la}$}

\noi In this section, we shall show the existence of second solution by proving the existence of minimizer of $I$ on $\mc N^-_{\la}$. We need some lemmas to prove this and for instance, we assume $0 \in \Om$. In order to put $U_\epsilon$ zero outside $\Om$, we fix $\delta >0$ such that $B_{4\delta} \subset \Om$ and let $\zeta \in C^{\infty}_c(\mb R^n)$ be such that $0 \leq \zeta \leq 1$ in $\mb R^n$, $\zeta \equiv 0$ in $\mc C B_{2\delta}$ and $\zeta \equiv 1$ in $B_\delta$. For each $\epsilon > 0$ and $x \in \mb R^n$, we define
$$ \Phi_\epsilon(x) :=  \zeta(x) U_\epsilon(x).$$ Moreover, since $u_\la$ is positive and bounded (see lemma \ref{lem6.2}), we can find $m,M>0$ such that for each $x \in \Om$, $m \leq u_\la(x)\leq M$.
\begin{Lemma}
$\sup \{I(u_\la + t\Phi_\epsilon): t\geq 0\} < I(u_\la) + \frac{s}{n}S^{\frac{n}{2s}}$, for each sufficiently small $\epsilon >0$.
\end{Lemma}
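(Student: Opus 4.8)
The plan is to reduce the statement to an asymptotic estimate for the one–variable function $\Psi_\epsilon(t):=I(u_\la+t\Phi_\epsilon)-I(u_\la)$, $t\ge 0$. Since $\Phi_\epsilon\ge 0$ and $\int_\Om|u_\la+t\Phi_\epsilon|^{2^*_s}\ge t^{2^*_s}\int_\Om\Phi_\epsilon^{2^*_s}$, we have $\Psi_\epsilon(t)\to-\infty$ as $t\to\infty$, while $\Psi_\epsilon(0)=0$, so the supremum is attained at some $t_\epsilon\in[0,\infty)$. If $t_\epsilon\to 0$ along a subsequence the conclusion is immediate ($\sup_t\Psi_\epsilon\to 0$), and $t_\epsilon$ is bounded above because the term $-\tfrac{t^{2^*_s}}{2^*_s}\int\Phi_\epsilon^{2^*_s}$ eventually dominates; so it suffices to prove $\Psi_\epsilon(t)<\tfrac sn S^{n/2s}$ for every $t\ge 0$ and all small $\epsilon$, with the decisive range being a fixed compact interval $[c_0,C_0]\subset(0,\infty)$.

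First I would expand $I(u_\la+t\Phi_\epsilon)$ around $u_\la$. Using that $u_\la$ is a positive weak solution of $(P_\la)$ (Proposition \ref{prp4.2}), with $\Phi_\epsilon$ an admissible test function since it is bounded with compact support in $\Om$ where $u_\la\ge m>0$, one gets $\langle u_\la,\Phi_\epsilon\rangle=\la\int_\Om a(x)u_\la^{-q}\Phi_\epsilon\,dx+\int_\Om u_\la^{2^*_s-1}\Phi_\epsilon\,dx$. The singular contribution then becomes
\[
\la t\int_\Om a(x)u_\la^{-q}\Phi_\epsilon\,dx-\la\int_\Om a(x)\big(G_q(u_\la+t\Phi_\epsilon)-G_q(u_\la)\big)dx=\la\int_\Om a(x)\left(\int_0^{t\Phi_\epsilon}\left(u_\la^{-q}-(u_\la+r)^{-q}\right)dr\right)dx,
\]
where we used $G_q'(\tau)=\tau^{-q}$ and the concavity of $G_q$ for $0<q\le 1$. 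This is nonnegative, and since $m\le u_\la\le M$ on $\mathrm{supp}\,\Phi_\epsilon$ the inner integral is $\le\min\{m^{-q}t\Phi_\epsilon,\ \tfrac q2 m^{-q-1}t^2\Phi_\epsilon^2\}$; splitting the integral according to whether $t\Phi_\epsilon$ is large or small and using the decay of $U_\epsilon$ shows this whole term is $O(\epsilon^{n/2})$, uniformly for $t\in[c_0,C_0]$.

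For the critical term I would use the elementary inequality $(a+b)^p\ge a^p+b^p+pa^{p-1}b$ for $p\ge 2$, $a,b\ge 0$, together with its refinement $(a+b)^p\ge a^p+b^p+pa^{p-1}b+\tfrac p2\,ab^{p-1}$ valid on $\{b\ge Ka\}$ for a suitable $K=K(p)$; applied with $p=2^*_s$, $a=u_\la$, $b=t\Phi_\epsilon$ on the bubble core $\{t\Phi_\epsilon\ge KM\}$ (and the plain inequality elsewhere), the terms $t\int u_\la^{2^*_s-1}\Phi_\epsilon$ cancel and we obtain, for $t\in[c_0,C_0]$,
\[
\Psi_\epsilon(t)\le O(\epsilon^{n/2})+\frac{t^2}{2}\|\Phi_\epsilon\|^2-\frac{t^{2^*_s}}{2^*_s}\int_\Om\Phi_\epsilon^{2^*_s}-\frac{t^{2^*_s-1}}{2}\int_{\{t\Phi_\epsilon\ge KM\}}u_\la\Phi_\epsilon^{2^*_s-1}\,dx.
\]
Inserting the standard asymptotics for the truncated bubble (cf. \cite{s3}), namely $\|\Phi_\epsilon\|^2=S^{n/2s}+O(\epsilon^{n-2s})$, $\int_\Om\Phi_\epsilon^{2^*_s}=S^{n/2s}+O(\epsilon^n)$, and $\int_{\{t\Phi_\epsilon\ge KM\}}u_\la\Phi_\epsilon^{2^*_s-1}\ge m\int_{\{\Phi_\epsilon\ge KM/c_0\}}\Phi_\epsilon^{2^*_s-1}\ge c\,\epsilon^{(n-2s)/2}$, and maximizing the first three terms in $t$ (which yields $\tfrac sn S^{n/2s}+O(\epsilon^{n-2s})$), gives
\[
\sup_{t\ge 0}\Psi_\epsilon(t)\le \frac sn S^{n/2s}+O(\epsilon^{n-2s})+O(\epsilon^{n/2})-c\,\epsilon^{(n-2s)/2};
\]
since $\tfrac{n-2s}{2}<\min\{n-2s,\ \tfrac n2\}$ for $n>2s$, the negative term dominates for small $\epsilon$, giving the strict inequality (the ranges $t<c_0$ and $t>C_0$ being disposed of directly, choosing $c_0$ small and $C_0$ large as above).

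The main obstacle is the balance at order $\epsilon^{(n-2s)/2}$. Two points are essential: (i) the singular term must be shown to contribute only $O(\epsilon^{n/2})$ rather than $O(\epsilon^{(n-2s)/2})$ — this is exactly the tangent-line cancellation above, using concavity of $G_q$ together with the two-sided bound $m\le u_\la\le M$ on $\mathrm{supp}\,\Phi_\epsilon$; and (ii) since the clean four-term inequality $(a+b)^p\ge a^p+b^p+pa^{p-1}b+pab^{p-1}$ fails when $2^*_s<3$, the improved inequality has to be localized to the region where $t\Phi_\epsilon$ is much larger than $u_\la$, i.e. the core of the bubble, which is precisely where $\int\Phi_\epsilon^{2^*_s-1}$ concentrates — this is what makes the negative gain of the exact order $\epsilon^{(n-2s)/2}$ and keeps the argument valid in every dimension $n>2s$.
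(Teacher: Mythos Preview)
Your proposal is correct and follows essentially the same route as the paper: expand $I(u_\la+t\Phi_\epsilon)-I(u_\la)$ using that $u_\la$ solves $(P_\la)$, control the singular remainder by concavity of $G_q$ and the two--sided bound $m\le u_\la\le M$ on $\mathrm{supp}\,\Phi_\epsilon$, and extract the decisive negative term $-c\,\epsilon^{(n-2s)/2}$ from the mixed critical piece $\int u_\la\Phi_\epsilon^{2^*_s-1}$ via a refined binomial inequality localized to the bubble core. One small imprecision: your claim that the singular remainder is $O(\epsilon^{n/2})$ is only correct when $n\ge 4s$; for $2s<n<4s$ the $\int\Phi_\epsilon^2$ contribution on the outer region is $O(\epsilon^{n-2s})$ rather than $O(\epsilon^{n/2})$ --- but since $n-2s>(n-2s)/2$ this is still $o(\epsilon^{(n-2s)/2})$ and your conclusion is unaffected (the paper avoids this by bounding the singular remainder via $\int|\Phi_\epsilon|^\rho$ with $1<\rho<\frac{n}{n-2s}$, which gives $O(\epsilon^{(n-2s)\rho/2})$ directly).
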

\begin{proof}
We assume $\epsilon>0$ to be sufficiently small. Using proposition $21$ of \cite{s3}, we have
$$ \int_{Q} \frac{|\Phi_\epsilon(x) - \Phi_\epsilon(y)|^2}{|x-y|^{n+2s}}dxdy \leq S^\frac{n}{2s}+ o(\epsilon^{n-2s})$$
 which says that we can find $r_1 > 0$ such that
 $$\int_{Q} \frac{|\Phi_\epsilon(x) - \Phi_\epsilon(y)|^2}{|x-y|^{n+2s}}dxdy \leq S^\frac{n}{2s}+ r_1\epsilon^{n-2s} .$$
Now, we consider

\begin{equation*}
\begin{split}
\int_{\Om} |\Phi_\epsilon|^{2^*_s}dx & =\int_{B_\delta} |U_\epsilon|^{2^*_s}dx +\int_{B_{2\delta} \setminus B_{\delta}} |\zeta(x)U_\epsilon(x)|^{2^*_s}dx \\
& = \int_{\mb R^n} |U_\epsilon|^{2^*_s}dx - \int_{\mb R^n \setminus B_\delta} |U_\epsilon|^{2^*_s}dx + \int_{B_{2\delta} \setminus B_{\delta}} |\zeta(x)U_\epsilon(x)|^{2^*_s}dx\\
& \geq \int_{\mb R^n} |U_\epsilon|^{2^*_s}dx - \int_{\mb R^n \setminus B_\delta} |U_\epsilon|^{2^*_s}dx\\
& = S^\frac{n}{2s}- \epsilon^{-n}\int_{\mb R^n \setminus B_\delta} |u^*(x/\epsilon)|^{2^*_s}dx \geq  S^\frac{n}{2s} - r_2\epsilon^n
\end{split}
\end{equation*}
 for some constant $r_2 >0$. We now fix $1 < \rho < \frac{n}{n-2s}$ and we have
 \begin{equation*}
\begin{split}
\int_{\Om}|\Phi_\epsilon|^\rho dx &= \epsilon^{-(n-2s)\rho/2}\int_{B_{2\delta}}|\zeta(x)u^*(x/\epsilon)|^\rho dx\\
& \leq {r_3}^{\prime} \epsilon^{-(n-2s)\rho/2} \int_{B_{2\delta}} \left( \left|\frac{x}{\epsilon S^\frac{1}{2s}}\right|^2\right)^{\frac{-(n-2s)\rho}{2}}dx\\
& = r_3 \epsilon^{(n-2s)\rho/2}
\end{split}
\end{equation*}
for constants ${r_3}^{\prime}, r_3 >0$. Now, choosing $\epsilon >\delta/S^\frac{1}{2s}$ we see that
 \begin{equation*}
\begin{split}
 \int_{B_\delta} |\Phi_\epsilon|^{2^*_s-1}dx &= \alpha^{2^*_s-1}\beta^{-(n+2s)} \epsilon^{(n-2s)/2} \int_{|y|< \frac{\delta}{\epsilon} S^\frac{1}{2s}}(1+|y|^2)^{-(n+2s)/2}dy\\
 & \geq \alpha^{2^*_s-1}\beta^{-(n+2s)} \frac{\epsilon^{(n-2s)/2}}{2^{(n+2s)/2}}\int_{|y|\leq 1}dy = r_4\epsilon^{(n-2s)/2}
 \end{split}
\end{equation*}
for some constant $r_4>0$. We can find appropriate constants $\rho_1, \rho_2 >0$ such that the following inequalities holds :
\begin{enumerate}
\item  $\displaystyle \la \left( \frac{(c+d)^{1-q}}{1-q} - \frac{c^{1-q}}{1-q} - \frac{d}{c^q} \right) \geq -\frac{\rho_1d^{\rho}}{r_3},$ for all $c \geq m, d\geq 0$,
\item
$\displaystyle \left(\frac{(c+d)^{2^*_s}}{2^*_s} - \frac{c^{2^*_s}}{2^*_s}-c^{2^*_s-1}d\right)\geq \frac{d^{2^*_s}}{2^*_s},$  for all $c,d \geq 0$,
\item
$ \displaystyle \frac{(c+d)^{2^*_s}}{2^*_s}-\frac{c^{2^*_s}}{2^*_s} - {c^{2^*_s-1}}d \geq \frac{d^{2^*_s}}{2^*_s} + \frac{\rho_2cd^{2^*_s-1}}{r_4m(2^*_s-1)},$ for all $0\leq c \leq M,\; d\geq 1$.
\end{enumerate}
 Case ($I$)($0<q<1$): Since $u$ is a positive weak solution of $(P_\la)$, using above inequalities, we obtain
 {\small \begin{equation*}
 \begin{split}
 I(u_\la +t\Phi_\epsilon)-I(u_\la)
 & = I(u_\la+t\Phi_\epsilon)-I(u_\la) - t\left(\int_Q \frac{(u_\la(x)-u_\la(y))(\Phi_\epsilon(x)-\Phi_\epsilon(y))}{|x-y|^{n+2s}}\right. ~dxdy \\
 &\hspace*{3.5cm}\left.-\la \int_\Om a(x)\Phi_\epsilon(x)u_{\la}^{-q}~dx - \int_{\Om}\Phi_\epsilon(x)u_{\la}^{2^*_s-1}~dx \right)\\
 & = \frac{t^2}{2} \int_{Q} \frac{|\Phi_\epsilon(x)- \Phi_\epsilon(y)|^2}{|x-y|^{n+2s}}~dxdy  -  \int_{\Om}( |u_\la+t\Phi_\epsilon|^{2^*_s}-|u_\la|^{2^*_s})~dx- t\int_{\Om}u_{\la}^{2^*_s-1}\Phi_\epsilon~dx  \\
 & \hspace*{2cm} - \la \left( \int_{\Om}\frac{a(x)(|u_\la+t\Phi_\epsilon|^{1-q}-|u_\la|^{1-q})}{1-q}~dx - t \int_{\Om}a(x)\Phi_\epsilon u_{\la}^{-q}~dx \right)\\
 & \leq \frac{t^2}{2}(S^{\frac{n}{2s}}+r_1\epsilon^{n-2s})-\frac{t^{2^*_s}}{2^*_s}\int_{\Om}|\Phi_\epsilon|^{2^*_s}dx + \frac{\rho_1t^{\rho}}{r_3}\int_{\Om}a(x)|\Phi_\epsilon|^{\rho}dx\\
 & \leq \frac{t^2}{2}(S^\frac{n}{2s}+r_1\epsilon^{n-2s})-\frac{\rho_2t^{2^*_s}}{2^*_s}(S^\frac{n}{2s}-r_2\epsilon^n)
 +\|a\|_{\infty}\rho_1t^{\rho}\epsilon^{(n-2s)\rho/2}
 \end{split}
\end{equation*}}
 for $0\leq t\leq 1/2$. Since we can assume $t\Phi_\epsilon \geq 1$, for each $t \geq 1/2$ and $|x| \leq 2\delta$, we have
 \begin{equation*}
 \begin{split}
 I(u_\la+t\Phi_\epsilon)-I(u_\la)
 & \leq \frac{t^2}{2}(S^\frac{n}{2s}+r_1\epsilon^{n-2s})-\frac{t^{2^*_s}}{2^*_s}\int_{\Om}|\Phi_\epsilon|^{2^*_s}dx -\frac{\rho_2t^{2^*_s-1}}{r_4(2^*_s-1)}\int_{|x|\leq \delta}|\Phi_\epsilon|^{2^*_s-1}dx \\
 &\quad + \frac{\rho_1t^{\rho}}{r_3}\int_{\Om}a(x)|\Phi_\epsilon|^{\rho}dx\\
 & \leq \frac{t^2}{2}(S^\frac{n}{2s}+r_1\epsilon^{n-2s})-\frac{t^{2^*_s}}{2^*_s}(S^\frac{n}{2s}-r_2\epsilon^n)-\frac{\rho_2 t^{2^*_s-1}}{(2^*_s-1)} \epsilon^{\frac{(n-2s)}{2}}\\
 &\quad +\frac{\rho_1t^{\rho}}{r_3}+\|a\|_{\infty}\rho_1t^{\rho}\epsilon^{\frac{(n-2s)\rho}{2}}.
\end{split}
\end{equation*}
Now, we define a function $h_\epsilon : [0, \infty) \rightarrow \mb R$ by
$$h_\e(t)=\begin{cases}		 \frac{t^2}{2}(S^\frac{n}{2s}+r_1\epsilon^{n-2s})-\frac{t^{2^*_s}}{2^*_s}(S^\frac{n}{2s}-r_2\epsilon^n)+\|a\|_{\infty}\rho_1t^{\rho}\epsilon^{\frac{(n-2s)\rho}{2}} &  t\in[0,1/2) \\ \left.\begin{array}{ll} \frac{t^2}{2}(S^\frac{n}{2s}+r_1\epsilon^{n-2s})-\frac{t^{2^*_s}}{2^*_s}(S^\frac{n}{2s}-r_2\epsilon^n)&-\frac{\rho_2t^{2^*_s-1}}{(2^*_s-1)} \epsilon^{\frac{(n-2s)}{2}}+\frac{\rho_1t^{\rho}}{r_3}  \\
 & +\|a\|_{\infty}\rho_1t^{\rho}\epsilon^{\frac{(n-2s)\rho}{2}}\end{array}\right\} & t\in [1/2,\infty)
	\end{cases}$$
With some computations, it can be checked that $h_\epsilon$ attains its maximum at
\[ t = 1-\frac{\rho_2\epsilon^{(n-2s)/2}}{(2^*_s-2)S^\frac{n}{2s}}+o(\epsilon^{(n-2s)/2}), \]
so we obtain
\[ \sup\{I(u_\la+t\Phi_\epsilon)-I(u_\la): t\geq 0 \} \leq \frac{s}{n}S^{\frac{n}{2s}}-\frac{\rho_2\epsilon^{(n-2s)/2}}{(2^*_s-1)}+o(\epsilon^{(n-2s)/2})< \frac{s}{n}S^{\frac{n}{2s}}. \]
\noi Case ($II$) ($q=1$): Since $u_\la$ is a positive weak solution of $(P_\la)$, using previous inequalities, we obtain
 \begin{equation*}
 \begin{split}
 I(u_\la+t\Phi_\epsilon)&-I(u_\la)
  = I(u_\la+t\Phi_\epsilon)-I(u_\la) - t\left(\int_Q \frac{(u_\la(x)-u_\la(y))(\Phi_\epsilon(x)-\Phi_\epsilon(y))}{|x-y|^{n+2s}}~dxdy \right.\\
 &\left.\hspace*{4.6cm}-\la \int_\Om a(x)\frac{\Phi_\epsilon(x)}{u_\la}~dx - \int_{\Om}\Phi_\epsilon(x)u_{\la}^{2^*_s-1}~dx\right)\\
 & = \frac{t^2}{2} \int_{Q} \frac{|\Phi_\epsilon(x)- \Phi_\epsilon(y)|^2}{|x-y|^{n+2s}}~dxdy  -  \int_{\Om}( |u_\la+t\Phi_\epsilon|^{2^*_s}-|u_\la|^{2^*_s})~dx- t\int_{\Om}u_{\la}^{2^*_s-1}\Phi_\epsilon~dx \\
 & \hspace*{3.5cm} - \la \left( \int_{\Om}a(x)( \ln |u_{\la}+t\Phi_\epsilon|- \ln |u_{\la}|)~dx - t\int_{\Om}a(x)\frac {\Phi_\epsilon}{u_\la}~dx \right)\\
 & \leq \frac{t^2}{2}(S^{\frac{n}{2s}}+r_1\epsilon^{n-2s})-\frac{t^{2^*_s}}{2^*_s}\int_{\Om}|\Phi_\epsilon|^{2^*_s}~dx - \la \int_{\Om} a(x) \frac{t^2|u_{\epsilon}|^2}{2|u_\la+tu_{\epsilon}|^2}~dx\\
 & \leq \frac{t^2}{2}(S^{\frac{n}{2s}}+r_1\epsilon^{n-2s})-\frac{t^{2^*_s}}{2^*_s}\int_{\Om}|\Phi_\epsilon|^{2^*_s}~dx + \frac{\la \|a\|_{\infty}}{2M^2}\int_{\Om}t^2|u_{\epsilon}|^2~dx\\
 & \leq \frac{t^2}{2}(S^{\frac{n}{2s}}+r_1\epsilon^{n-2s})-\frac{t^{2^*_s}}{2^*_s}\int_{\Om}|\Phi_\epsilon|^{2^*_s}~dx + \gamma \|a\|_{\infty} \epsilon^{(n-2s)}
 \end{split}
\end{equation*}
 for $0\leq t\leq 1/2$, where $\gamma$ is a constant. Since we can assume $t\Phi_\epsilon \geq 1$, for each $t \geq 1/2$ and $|x| \leq 2\delta$, we have
 \begin{equation*}
 \begin{split}
 I(u_\la+t\Phi_\epsilon)-I(u_\la)
 & \leq \frac{t^2}{2}(S^\frac{n}{2s}+r_1\epsilon^{n-2s})-\frac{t^{2^*_s}}{2^*_s}\int_{\Om}|\Phi_\epsilon|^{2^*_s}dx \\
 &\quad -\frac{\rho_2t^{2^*_s-1}}{r_4(2^*_s-1)}\int_{|x|\leq \delta}|\Phi_\epsilon|^{2^*_s-1}dx - \la \int_{\Om} a(x) \frac{t^2|u_{\epsilon}|^2}{2|u_\la+tu_{\epsilon}|^2}dx\\
 & \leq \frac{t^2}{2}(S^\frac{n}{2s}+r_1\epsilon^{n-2s})-\frac{t^{2^*_s}}{2^*_s}(S^\frac{n}{2s}-r_2\epsilon^n)-\frac{\rho_2t^{2^*_s-1}}{(2^*_s-1)} \epsilon^{\frac{(n-2s)}{2}}\\
 &\quad +\frac{\rho_1t^{\rho}}{r_3}  + \gamma \|a\|_{\infty} \epsilon^{(n-2s)}
\end{split}
\end{equation*}
for each $t \geq 1/2$. With similar computations as in  case (I), it can be shown that
\[ \sup \{I(u_\la + t\Phi_\epsilon): t\geq 0\} < I(u_\la) + \frac{s}{n}S^{\frac{n}{2s}}.\]\QED
\end{proof}
\begin{Lemma}
There holds $\inf I(\mc N^-_{\la}) < I(u_\la)+\frac{s}{n} S^{\frac{n}{2s}}$.
\end{Lemma}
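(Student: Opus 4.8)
The goal is to show $\inf I(\mc N^-_{\la}) < I(u_\la)+\frac{s}{n} S^{\frac{n}{2s}}$, and the plan is to exhibit a single element of $\mc N^-_{\la}$ whose energy lies strictly below that threshold, drawing on the estimate in the previous lemma.

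The plan is as follows. First I would recall that, by the fiber map analysis (Lemma \ref{lem3.2} and the definition of $\mc N^-_{\la}$), for every $w \in X_{+,q}\setminus\{0\}$ there is a unique $t^-(w)>0$ with $t^-(w)\,w \in \mc N^-_{\la}$, and moreover $t^-(w)$ is precisely the point where $t\mapsto I(tw)$ attains its global maximum on $(0,\infty)$ once $t$ exceeds $t_{\max}$; in particular $I(t^-(w)w) = \sup_{t\ge 0} I(tw)$ when $w$ is such that the fiber map is increasing up to $t^-$. The key is to apply this with $w = u_\la + t_0\Phi_\epsilon$ for an appropriate choice. More precisely, I would argue that since $u_\la \in X_{+,q}$ and $\Phi_\epsilon \in X_+$ with compact support, the function $u_\la + t\Phi_\epsilon$ traces out a path and one can find, for small $\epsilon>0$, a value $t_\epsilon \ge 0$ so that $u_\la + t_\epsilon \Phi_\epsilon$ (suitably rescaled) lies on $\mc N^-_{\la}$. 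The cleanest route: show that the curve $t \mapsto I(u_\la + t\Phi_\epsilon)$ starts at $I(u_\la)$ (value at $t=0$), and that for large $t$ it tends to $-\infty$ (because of the $-\frac{t^{2^*_s}}{2^*_s}\int|\Phi_\epsilon|^{2^*_s}$ term dominating), hence attains an interior maximum at some $t^* = t^*(\epsilon) > 0$; at that maximum $\frac{d}{dt}I(u_\la+t\Phi_\epsilon)=0$, and one checks the second-order condition $\phi''_{w}(1)<0$ (equivalently that we are past the unique $t_{\max}$ for the direction $w = u_\la + t^*\Phi_\epsilon$) so that $u_\la + t^*\Phi_\epsilon \in \mc N^-_{\la}$ — or, if not exactly on $\mc N^-_{\la}$, rescale by the unique $t^-$ given by Lemma \ref{lem3.2} and use that $I(t^-(u_\la+t^*\Phi_\epsilon)(u_\la+t^*\Phi_\epsilon)) = \max_{t}I(t(u_\la+t^*\Phi_\epsilon))$. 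Then
\[
\inf I(\mc N^-_{\la}) \le I\bigl(u_\la + t^*\Phi_\epsilon\bigr) \le \sup_{t\ge 0} I(u_\la + t\Phi_\epsilon) < I(u_\la) + \frac{s}{n}S^{\frac{n}{2s}},
\]
where the last inequality is exactly the content of the preceding lemma. This chain gives the claim.

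The main obstacle I anticipate is the bookkeeping needed to guarantee that the relevant test element actually belongs to $\mc N^-_{\la}$ rather than merely to $\mc N_{\la}$ or $\mc N^+_{\la}$: one must verify that the maximum along the ray through $u_\la + t^*\Phi_\epsilon$ is attained at a point with negative second derivative of the fiber map, which requires knowing that this ray direction is "large enough" — concretely that $\int_\Om |u_\la+t^*\Phi_\epsilon|^{2^*_s}$ is comparable to or exceeds the balance point determined by $\|u_\la+t^*\Phi_\epsilon\|^2$. Since $\Phi_\epsilon$ concentrates and carries a definite amount of $L^{2^*_s}$ mass (bounded below by $S^{n/2s}-r_2\epsilon^n$ from the previous lemma's computation) while $u_\la$ is fixed and bounded, for $t^*$ bounded away from zero this is automatic; I would therefore also record a lower bound $t^*(\epsilon) \ge \text{const} > 0$, which follows because at $t=0$ the derivative $\frac{d}{dt}I(u_\la+t\Phi_\epsilon)\big|_{t=0}$ equals $\int_Q \frac{(u_\la(x)-u_\la(y))(\Phi_\epsilon(x)-\Phi_\epsilon(y))}{|x-y|^{n+2s}}dxdy - \la\int_\Om a(x)u_\la^{-q}\Phi_\epsilon dx - \int_\Om u_\la^{2^*_s-1}\Phi_\epsilon dx$, which is $\ge 0$ since $u_\la$ is a weak solution (Proposition \ref{prp4.2}) and $\Phi_\epsilon \ge 0$; hence the maximizing $t^*$ is strictly positive, and in fact staying away from $0$ uniformly in small $\epsilon$ since the coefficient of $t^2$ stays near $S^{n/2s}$. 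With $t^*$ thus controlled, membership in $\mc N^-_{\la}$ (possibly after the harmless rescaling via Lemma \ref{lem3.2}) is secured, and the displayed chain of inequalities completes the proof. \QED
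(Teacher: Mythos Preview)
There is a genuine gap: you conflate two different derivatives. The condition $\frac{d}{dt}I(u_\la+t\Phi_\epsilon)\big|_{t=t^*}=0$ is the vanishing of the directional derivative of $I$ at $u_\la+t^*\Phi_\epsilon$ in the direction $\Phi_\epsilon$, i.e.\ $\langle I'(u_\la+t^*\Phi_\epsilon),\Phi_\epsilon\rangle=0$. The Nehari condition, by contrast, is $\phi'_{w}(1)=\langle I'(w),w\rangle=0$ with $w=u_\la+t^*\Phi_\epsilon$. These are different linear functionals of $I'(w)$ and there is no reason the first implies the second, so $u_\la+t^*\Phi_\epsilon$ need not lie in $\mc N_\la$ at all. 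Your rescaling fallback does not repair this: if you pass to $t^-(w)\,w\in\mc N^-_\la$, then $t^-(w)\,w=t^-(w)u_\la+t^-(w)t^*\Phi_\epsilon$ is no longer on the affine path $s\mapsto u_\la+s\Phi_\epsilon$, so the key bound $I(t^-(w)w)\le\sup_{t\ge0}I(u_\la+t\Phi_\epsilon)$ (which is what the previous lemma controls) is not available.

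The paper fixes exactly this point by tracking, as functions of $t$, the \emph{fiber-map} quantities
\[
\sigma_1(t)=\phi'_{u_\la+t\Phi_\epsilon}(1),\qquad \sigma_2(t)=\phi''_{u_\la+t\Phi_\epsilon}(1),
\]
rather than the affine-path derivative. Since $u_\la\in\mc N^+_\la$, one has $\sigma_2(0)>0$; since $\sigma_2(t)\to-\infty$, set $t_0=\sup\{t\ge0:\sigma_2(t)\ge0\}$. Then $\la\in(0,\Lambda)$ (so $\mc N^0_\la=\emptyset$) forces $\sigma_1(t_0)>0$, and because $\sigma_1(t)\to-\infty$ one finds $t'>t_0$ with $\sigma_1(t')=0$; automatically $\sigma_2(t')<0$, hence $u_\la+t'\Phi_\epsilon\in\mc N^-_\la$ \emph{without rescaling}. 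Now the chain $\inf I(\mc N^-_\la)\le I(u_\la+t'\Phi_\epsilon)\le\sup_{t\ge0}I(u_\la+t\Phi_\epsilon)<I(u_\la)+\frac{s}{n}S^{n/2s}$ is legitimate. Your sketch is close in spirit, but you need to look for a zero of $\sigma_1$ (the Nehari functional along the family), not a critical point of $t\mapsto I(u_\la+t\Phi_\epsilon)$.
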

\begin{proof}
We start by fixing sufficiently small $\epsilon >0$ as in previous lemma and define functions $\sigma_1, \sigma_2: [0,\infty) \rightarrow \mb R$ by
\[ \sigma_1(t) = \int_{Q}\frac{|(u_\la+t\Phi_\epsilon)(x)-(u_\la+t\Phi_\epsilon)(y)|^2}{|x-y|^{n+2s}}~dxdy - \la \int_{\Om}a(x)|u_\la+t\Phi_\epsilon|^{1-q}~dx - \int_{\Om}|u_\la+t\Phi_\epsilon|^{2^*_s}~dx,\]
\begin{align*} \sigma_2(t) = \int_{Q}\frac{|(u_\la+t\Phi_\epsilon)(x)-(u_\la+t\Phi_\epsilon)(y)|^2}{|x-y|^{n+2s}}~dxdy &- \la q \int_{\Om}a(x)|u_\la+t\Phi_\epsilon|^{1-q}~dx\\
& - (2^*_s-1) \int_{\Om}|u+t\Phi_\epsilon|^{2^*_s}~dx. \end{align*}
Let $t_0 = \sup\{t \geq 0: \sigma(t)\geq 0 \}$, then $\sigma_2(0) = \phi^{\prime\prime}_{u_\la}(1) >0$ and $\sigma_2(t) \rightarrow -\infty$ as $t \rightarrow \infty$ which implies $0<t_0<\infty$. As $\la \in (0, \Lambda)$, we obtain $\sigma_1(t_0)>0$ and since $\sigma_1(t) \rightarrow -\infty$ as $t \rightarrow \infty$, there exists $t^{\prime} \in (t_0, \infty)$ such that $\sigma_1(t^{\prime})=0.$ This gives $\phi^{\prime \prime}_{u_\la+t^{\prime}\Phi_\epsilon}(1)<0$, because $t^{\prime}>t_0$ implies $\sigma_2(t^{\prime})<0.$ Hence, $ (u_\la+t^{\prime}\Phi_\epsilon)\in \mc N^-_{\la}$ and using previous lemma, we obtain the result.\QED
\end{proof}

\begin{Proposition}
There exists $v_\la \in \mc N_{\la}^{-}$ satisfying $I(v_\la) = \inf\limits_{v\in \mc N_{\la}^{-}} I(v)$.
\end{Proposition}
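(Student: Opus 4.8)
\noi The plan is to obtain $v_\la$ as the weak limit of an energy-minimizing sequence in $\mc N^-_\la$; the only genuine difficulty will be the loss of compactness at the critical exponent, which is defeated by the strict inequality $\inf I(\mc N^-_\la)<I(u_\la)+\frac{s}{n}S^{n/2s}$ of the previous lemma, and the whole scheme parallels the existence proof for the minimizer on $\mc N^+_\la$. I would set $c_-:=\inf I(\mc N^-_\la)$ (finite by Lemma \ref{le01}) and take $\{v_k\}\subset\mc N^-_\la$ with $I(v_k)\to c_-$; boundedness of $\{I(v_k)\}$ together with Lemma \ref{le01}$(ii)$ forces $\{v_k\}$ bounded in $X_0$, so along a subsequence $v_k\rightharpoonup v_\la$ in $X_0$, $v_k\to v_\la$ in $L^r(\Om)$ for $1\le r<2^*_s$ and a.e.\ (hence $v_\la\ge 0$). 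Writing $w_k=v_k-v_\la$ and setting $b^2=\lim_k\|w_k\|^2$, $d^{2^*_s}=\lim_k\int_\Om|w_k|^{2^*_s}$ (so $Sd^2\le b^2$), the Brezis--Lieb lemma applied to $\|\cdot\|^2$ and $\int_\Om|\cdot|^{2^*_s}$, combined with $\int_\Om a(x)|v_k|^{1-q}\to\int_\Om a(x)|v_\la|^{1-q}$ when $0<q<1$ and the reverse-Fatou bound $\limsup_k\int_\Om a\ln|v_k|\le\int_\Om a\ln|v_\la|$ when $q=1$, gives
\[ c_-\ \ge\ I(v_\la)+\tfrac{b^2}{2}-\tfrac{d^{2^*_s}}{2^*_s},\qquad \phi'_{v_\la}(1)+b^2-d^{2^*_s}=0,\qquad \phi''_{v_\la}(1)+b^2-(2^*_s-1)d^{2^*_s}=\lim_k\phi''_{v_k}(1)\le 0, \]
the last inequality since $\phi''_{v_k}(1)<0$ on $\mc N^-_\la$.

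\noi Next I would show $v_\la\in X_{+,q}$. If $v_\la\equiv 0$ and $b=0$ then $v_k\to 0$ in $X_0$, contradicting $\inf\{\|v\|:v\in\mc N^-_\la\}>0$; if $0<q<1$ and $b\ne 0$, then $\phi'_{v_\la}(1)=0$ forces $b^2=d^{2^*_s}$, whence $b^2\ge S^{n/2s}$ and $c_-\ge\tfrac{s}{n}b^2\ge\tfrac{s}{n}S^{n/2s}>I(u_\la)+\tfrac{s}{n}S^{n/2s}$ (as $I(u_\la)<0$), contrary to the previous lemma; and when $q=1$, boundedness of $\{I(v_k)\}$ and $\{\|v_k\|\}$ makes $\{\int_\Om a\ln|v_k|\}$ bounded, so the reverse-Fatou bound yields $G_1(v_\la)\in L^1(\Om)$ and $v_\la\not\equiv 0$. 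Since $\la\in(0,\La)$ and $v_\la\in X_{+,q}$, the fiber map $\phi_{v_\la}$ then has exactly two critical points $0<t_1<t_2$ with $t_1v_\la\in\mc N^+_\la$, $t_2v_\la\in\mc N^-_\la$; in particular $I(t_2v_\la)\ge c_-$ and $\phi_{v_\la}(t_1)=I(t_1v_\la)\ge I(u_\la)$.

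\noi It remains to prove $b=0$. Once this is known, $d=0$, $\phi'_{v_\la}(1)=0$ and $\phi''_{v_\la}(1)=\lim_k\phi''_{v_k}(1)\le 0$; since $\mc N^0_\la=\{0\}$ and $v_\la\not\equiv 0$ we get $\phi''_{v_\la}(1)<0$, so $v_\la\in\mc N^-_\la$, $c_-\ge I(v_\la)\ge c_-$, and $v_k\to v_\la$ in $X_0$, finishing the proof. To exclude $b\ne 0$ I would set $g(t)=\phi_{v_\la}(t)+\tfrac{b^2}{2}t^2-\tfrac{d^{2^*_s}}{2^*_s}t^{2^*_s}$, so that $c_-\ge g(1)$, $g'(1)=0$, $g''(1)\le 0$, while $p(t):=g(t)-\phi_{v_\la}(t)$ has $\max_{t\ge 0}p(t)\ge\tfrac{s}{n}S^{n/2s}$ when $b\ne 0$. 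Then, exactly as for $\mc N^+_\la$, I would split into cases according to the position of $t=1$ relative to $t_1,t_2$ and the sign of $\tfrac{b^2}{2}-\tfrac{d^{2^*_s}}{2^*_s}$: if $1>t_2$ then $\phi'_{v_\la}(1)<0$, so $b^2>d^{2^*_s}$, and monotonicity of $g$ on $[t_2,1]$ (using $\phi'_{v_\la}(t_2)=0$, $t_2<1$) gives $c_-\ge g(1)>g(t_2)\ge\phi_{v_\la}(t_2)\ge c_-$, impossible; in the remaining cases, evaluating $g$ at (or near) the maximum point of $p$ and using $\phi_{v_\la}\ge\phi_{v_\la}(t_1)\ge I(u_\la)$ on the relevant range should yield $c_-\ge g(1)\ge I(u_\la)+\tfrac{s}{n}S^{n/2s}$, again contradicting the previous lemma — apart from the degenerate boundary situation $t_1=1$, $\tfrac{b^2}{2}=\tfrac{d^{2^*_s}}{2^*_s}$, which (as for $\mc N^+_\la$) forces $b=0$.

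\noi The step I expect to be the main obstacle is the last one: excluding concentration at the critical Sobolev exponent. The strict energy gap $\inf I(\mc N^-_\la)<I(u_\la)+\tfrac{s}{n}S^{n/2s}$ is exactly what rules it out, but turning it into a contradiction requires the delicate fiber-map bookkeeping above — in particular identifying which branch of $\phi_{v_\la}$ (and of the perturbed map $g$) contains $t=1$, and controlling the logarithmic singular term in the case $q=1$.
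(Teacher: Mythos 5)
Your plan is the same as the paper's: take a minimizing sequence in $\mc N^-_\la$, pass to a weak limit $v_\la$, split the energy and norms via Brezis--Lieb into $v_\la$-part plus $(b,d)$-part, use the energy gap $\inf I(\mc N^-_\la)<I(u_\la)+\frac{s}{n}S^{n/2s}$ to show $v_\la\not\equiv 0$ and to kill concentration, and finish with a case analysis on the fiber map of $v_\la$. Your additional observation that $g''(1)=\lim_k\phi''_{v_k}(1)\le 0$ (so $g'(1)=0$, $g''(1)\le 0$) is exactly the ingredient that justifies the step $g(1)\ge g(\underline t)$ which the paper invokes without spelling out, so that part of your bookkeeping is on target. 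Where your sketch diverges from the paper is in the case split for excluding $b\ne 0$: you announce that you will proceed ``exactly as for $\mc N^+_\la$,'' i.e.\ split on $t_2\gtrless 1$ and on the sign of $\tfrac{b^2}{2}-\tfrac{d^{2^*_s}}{2^*_s}$, but the $\mc N^+_\la$ argument for the intermediate case relies on the \emph{boundedness} of $\mc N^+_\la$ (Lemma \ref{le01}$(i)$), which has no analogue for $\mc N^-_\la$; the paper's $\mc N^-_\la$ proof instead splits on $d>0$ versus $d=0$. When $d>0$ it uses the maximizer $\underline t=(b^2/d^{2^*_s})^{1/(2^*_s-2)}$ of the concentration term $f$, with $f(\underline t)\ge\frac{s}{n}S^{n/2s}$, and the chain $c_-\ge g(1)\ge g(\underline t)=\phi_{v_\la}(\underline t)+f(\underline t)\ge \phi_{v_\la}(t_1)+\frac{s}{n}S^{n/2s}\ge I(u_\la)+\frac{s}{n}S^{n/2s}$ after subcases on $\underline t\le 1$, $\underline t\le t_1$, $t_2=1$ — this is the ``max of $p$'' idea you gesture at, so you have the right idea but not the right subcases. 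When $d=0$, the identities $\phi'_{v_\la}(1)=-b^2$ and $\phi''_{v_\la}(1)\le -b^2$ give a direct contradiction with $t_2\ge 1$ unless $b=0$, rather than the degenerate boundary $t_1=1$, $\tfrac{b^2}{2}=\tfrac{d^{2^*_s}}{2^*_s}$ you carried over from the $\mc N^+_\la$ proof. In short: the approach is correct and matches the paper, but to close the argument you should adopt the paper's $d>0$/$d=0$ dichotomy rather than transplant the $\mc N^+_\la$ case split.
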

\begin{proof}
Let $\{v_k\}$ be sequence in $\mc N^-_{\la}$ such that $I(v_k) \rightarrow \inf I(\mc N^-_{\la})$ as $k \rightarrow \infty$. Using lemma \ref{le01}, we may assume that there exist $v_\la$ such that $v_k \rightharpoonup v_\la$ weakly in $X_0$. We set $z_k := (v_k - v_\la)$ and claim that $v_k \rightarrow v_\la$ strongly in $X_0$. Suppose $\|z_k\|^2 \rightarrow c^2$ and $\int_{\Om}|z_k|^{2^*_s}dx \rightarrow d^{2^*_s}$ as $k \rightarrow \infty$. Then using Brezis-Lieb lemma, we obtain
\[ \|v_\la\|^2 +c^2 = \la\int_{\Om}a(x)|v_\la|^{1-q}dx + \int_{\Om}|v_\la|^{2^*_s}+d^{2^*_s}dx. \]
We claim that $v_\la \in X_{+,q}$. Suppose $v_\la =0$, this implies $c \neq 0$ (using lemma 3.4(ii)) and thus
\[ \inf I(\mc N^-_{\la}) = \lim I(v_k) = I(0)+\frac{c^2}{2}-\frac{d^{2^*_s}}{2^*_s} \geq \frac{s}{n}S^\frac{n}{2s},\]
but by previous lemma, $ (sS^\frac{n}{2s})/n \leq \inf I(\mc N^-_{\la}) < I(u_\la)+ \frac{s}{n}S^\frac{n}{2s}$ implying $I(u_\la) >0$ or we say that $\inf I(\mc N_{\la}^{-}) >0$, which is a contradiction. So $v_\la \in X_{+,q}$ and thus, our assumption $\la \in (0, \Lambda)$ says that there exists $0<t_1<t_2$ such that $\phi^{\prime}_{v_\la}(t_1)= \phi^{\prime}_{v_\la}(t_2)=0$ and $t_1 v_\la \in \mc N^+_{\la}$, $t_2 v_\la \in \mc N^-_{\la}$. Let us define $f,g :(0,\infty) \rightarrow \mb R$ by
\begin{equation}\label{eq11}
 f(t) = \frac{c^2t^2}{2}-\frac{d^{2^*_s}t^{2^*_s}}{2^*_s} \; \text{and} \; g(t) = \phi_{v_\la}(t)+ f(t).
\end{equation}
Then, following three cases arise :\\
\noi (i) $t_2 < 1,$\\
\noi(ii) $t_2 \geq 1$ and $d >0$, and \\
\noi(iii) $t_2 \geq 1$ and $d = 0$.

\noi Case (i) $t_2 <1$ implies $g^{\prime}(1)= \phi^{\prime}_{v_\la}(1)+ f^{\prime}(1) = 0$, using $\eqref{eq11}$ and $g^{\prime}(t_2)= \phi^{\prime}_{v_\la}(t_2)+ f^{\prime}(t_2) = t_2(c^2-d^{2^*_s}t_{2}^{2^*_s-2}) \geq t_2(c^2-d^{2^*_s})>0$. This implies that $g$ is increasing on $[t_2,1]$ and we have
\[ \inf I(\mc N^-_{\la}) = g(1)> g(t_2) \geq I(t_2 v_\la)+\frac{{t_2}}{2}(c^2-d^{2^*_s}) > I(t_2 v_\la) \geq \inf I(\mc N^-_{\la}) \]
which is a contradiction.\\
\noi Case (ii) Let $\underline{t}=(c^2/d^{2^*_s})^{\frac{1}{2^*_s-2}}$ and we can check that $f$ attains its maximum at $\underline{t}$ and $$f(\underline{t})= \frac{c^2{\underline{t}}^2}{2}-\frac{d^{2^*_s}{\underline{t}}^{2^*_s}}{2^*_s}=\left( {\frac{c^2}{d^{2^*_s}}}\right)^{\frac{2^*_s}{2^*_s-2}}\left(\frac{1}{2}- \frac{1}{2^*_s} \right) \geq S^{\frac{2^*_s}{2^*_s-2}}\frac{s}{n}= \frac{s}{n}S^\frac{n}{2s}. $$
Also, $f^{\prime}({t})= (c^2-d^{2^*_s}t^{2^*_s-2})t >0 $ if $0<t<\underline{t}$ and $f^{\prime}(t) <0$ if $t > \underline{t}$. Moreover, we know $g(1) = \max \limits_{t>0} \{g(t)\} \geq g(\underline{t})$ using the assumption $\la \in (0, \Lambda)$. If $\underline{t}\leq 1$, then we have
\[ \inf I(\mc N^-_{\la}) = g(1)\geq g(\underline{t})=I(\underline{t}v_\la)+ f(\underline{t}) \geq I(t_1 v_\la)+\frac{s}{n}S^\frac{n}{2s} \]
which contradicts the previous lemma. Thus, we must have $\underline{t}>1$. Since $g^{\prime}(t) \leq 0$ for $t\geq 1$, there holds $\phi^{\prime\prime}_{v_\la}(t) \leq -f^{\prime}(t) \leq 0 $ for $1\leq t \leq \underline{t}$. Then we have $\underline{t}\leq t_1$ or $t_2=1$. If $\underline{t}\leq t_1$ then
\[ \inf I(\mc N^-_{\la})= g(1) \geq  g(\underline{t})=I(\underline{t}v_\la)+ f(\underline{t}) \geq I(t_1v_\la)+\frac{s}{n}S^\frac{n}{2s}\]
which is a contradiction. If $t_2=1$ then using $c^2=d^{2^*_s}$ we obtain
\[ \inf I(\mc N^-_{\la})=g(1)= I(v_\la)+ \left(\frac{c^2}{2}-\frac{d^{2^*_s}}{2^*_s}\right)\geq I(v_\la) + \frac{s}{n}S^\frac{n}{2s} \]
which is a contradiction and thus only case (iii) holds. If $c \neq 0$, then $\phi^{\prime}_{v_\la}(1) = -c^2 <0$ and $\phi^{\prime \prime}_{v_\la}(1) = -c^2<0$ which contradicts $t_2 \geq 1$. Thus, $c=0$ which implies $v_k \rightarrow v_\la$ strongly in $X_0$. Consequently, $v_\la \in \mc N^-_{\la}$ and $\inf I(\mc N^-_{\la})= I(v_\la)$.\QED
\end{proof}

\begin{Proposition}\label{prp5.4}
 For $\la \in (0,\La),$ $v_\la$ is a positive weak solution of $(P_\la)$.
\end{Proposition}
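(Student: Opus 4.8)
The plan is to follow the argument of Proposition \ref{prp4.2}; the only new feature is that $v_\la$ minimizes $I$ merely on the submanifold $\mc N_\la^-$, so a test perturbation must be reprojected onto $\mc N_\la^-$ before minimality can be used. Fix $\psi\in C_c^\infty(\Om)$. By Lemma \ref{le05}, $v_\la\ge\phi=\eta\phi_1$, and since $\phi>0$ in $\Om$ there is $\al>0$ with $v_\la\ge\al$ on $\operatorname{supp}\psi$; hence for $|\e|$ small we have $v_\la+\e\psi\ge 0$, $v_\la+\e\psi\not\equiv 0$, and (as $v_\la+\e\psi$ equals $v_\la$ off $\operatorname{supp}\psi$ and is squeezed between two positive constants on $\operatorname{supp}\psi$) $v_\la+\e\psi\in X_{+,q}$. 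Since $\la\in(0,\La)$, Lemma \ref{lem3.2}, which is valid on all of $(0,\La)$ by the definition of $\La$, produces a unique $t_\e>0$ with $t_\e(v_\la+\e\psi)\in\mc N_\la^-$, and the implicit function theorem argument of Lemma \ref{le03}(2) applies verbatim -- it uses only $v_\la+\e\psi\in X_{+,q}$ and $\phi''_{v_\la}(1)\ne 0$, not any sign condition on $\psi$ -- so that $t_\e\to 1$ as $\e\to 0$.

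Next I would set up a two-sided comparison. Since $t_\e(v_\la+\e\psi)\in\mc N_\la^-$ and $I(v_\la)=\inf I(\mc N_\la^-)$, we have $I(t_\e(v_\la+\e\psi))\ge I(v_\la)$; and since $v_\la\in\mc N_\la^-$, the point $1$ is a strict local maximum of the fiber map $\phi_{v_\la}$ (by Lemma \ref{lem3.2} it is the larger critical point, with $\phi_{v_\la}$ increasing to its left and decreasing to its right), so $I(t_\e v_\la)=\phi_{v_\la}(t_\e)\le\phi_{v_\la}(1)=I(v_\la)$ once $t_\e$ is close to $1$. Therefore $I(t_\e(v_\la+\e\psi))-I(t_\e v_\la)\ge 0$ for all sufficiently small $\e>0$. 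Writing the left-hand side as $\phi_{v_\la+\e\psi}(t_\e)-\phi_{v_\la}(t_\e)$, dividing by $\e$ and letting $\e\to 0^+$ (with $t_\e\to 1$), the difference quotient converges to
\[ \int_Q\frac{(v_\la(x)-v_\la(y))(\psi(x)-\psi(y))}{|x-y|^{n+2s}}\,dxdy-\la\int_\Om a(x)v_\la^{-q}\psi\,dx-\int_\Om v_\la^{2^*_s-1}\psi\,dx . \]
Here the first term comes from the fact that $\e\mapsto\|v_\la+\e\psi\|^2$ is a quadratic polynomial in $\e$; the last from dominated convergence with the integrable majorant $C\bigl(v_\la^{2^*_s-1}|\psi|+|\psi|^{2^*_s}\bigr)$ (exactly as in the proof of \eqref{eq4.3}); and the middle from the mean value theorem, which on $\operatorname{supp}\psi$ gives $\e^{-1}\bigl(G_q(t_\e(v_\la+\e\psi))-G_q(t_\e v_\la)\bigr)=G_q'(\xi_\e)\,t_\e\psi$ with $\xi_\e$ trapped between two positive constants, so the integrand stays bounded and tends pointwise to $v_\la^{-q}\psi$, while off $\operatorname{supp}\psi$ this difference quotient is identically $0$.

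Since the difference quotient is $\ge 0$ for small $\e>0$, its limit above is $\ge 0$; running the same computation with $\psi$ replaced by $-\psi$ (again admissible) shows the limit is $\le 0$, hence equal to $0$, for every $\psi\in C_c^\infty(\Om)$. (Here $v_\la^{-q}\psi\in L^1(\Om)$ is automatic, since $v_\la^{-q}\le\al^{-q}$ on $\operatorname{supp}\psi$.) As $v_\la\in X_0$ and $v_\la\ge\phi>0$ in $\Om$ by Lemma \ref{le05}, this shows $v_\la$ is a positive weak solution of $(P_\la)$.

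The step I expect to demand the most care is this uniform passage to the limit in the difference quotients, for $\e$ of either sign: one has to keep $t_\e$, hence the intermediate points $\xi_\e$, inside a fixed compact subinterval of $(0,\infty)$ bounded away from $0$ -- which is exactly why the lower bound $v_\la\ge\al$ on $\operatorname{supp}\psi$ coming from Lemma \ref{le05} is indispensable -- and to exhibit the integrable majorant for the critical term; both are routine, though the case $q=1$ (where $G_q'(t)=1/t$) should be checked separately. The sign-changing character of $\psi$ causes no difficulty precisely because $v_\la$ never touches $0$ on $\operatorname{supp}\psi$.
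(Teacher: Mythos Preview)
Your proposal is correct and follows essentially the same route as the paper: use Lemma~\ref{le05} to bound $v_\la$ below on $\operatorname{supp}\psi$, project $v_\la+\e\psi$ onto $\mc N_\la^-$ via $t_\e$ with $t_\e\to 1$, and combine the minimality of $v_\la$ on $\mc N_\la^-$ with the inequality $I(t_\e v_\la)\le I(v_\la)$ to obtain $I(t_\e(v_\la+\e\psi))-I(t_\e v_\la)\ge 0$, then pass to the limit. Your write-up is in fact more careful than the paper's on two points the paper glosses over---that Lemma~\ref{le03}(2) applies to sign-changing $\psi$ (the implicit function theorem step uses no sign condition) and the dominated-convergence justification of the limit in the singular term---so there is nothing to correct.
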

\begin{proof}
Let $\psi \in C^{\infty}_c(\Om)$. Using lemma \ref{le05}, since $\phi > 0$ in $\Om$, we can find $ \alpha >0$ such that $v_\la \geq \alpha$ on $supp(\psi)$. Also, $t_{\epsilon} \rightarrow 1$ as $\epsilon \rightarrow 0+$, where $t_{\epsilon}$ is the unique positive real number corresponding to $(v_\la+\epsilon \psi)$ such that $t_\epsilon (v_\la+\epsilon \psi) \in \mc N^{-}_{\la}$. Then, by lemma \ref{le03} we have
\begin{equation*}
\begin{split}
0 & \leq \lim\limits_{\epsilon \rightarrow 0}\frac{I(t_\e(v_\la+\epsilon\psi)) - I(v_\la)}{\epsilon} \leq \lim \limits_{\epsilon \rightarrow 0}\frac{I(t_{\epsilon}(v_\la+\epsilon\psi)) - I(t_{\epsilon} v_\la)}{\epsilon}\\
& = \int_Q \frac{(v_\la(x)-v_\la(y))(\psi(x)-\psi(y))}{|x-y|^{n+2s}} dxdy - \la \int_\Om a(x) v_{\la}^{-q}\psi dx - \int_\Om  v_{\la}^{2^*_s-1}\psi dx.
\end{split}
\end{equation*}
Since $\psi \in C^{\infty}_c(\Om)$ is arbitrary, we conclude that $v_{\la}$ is positive weak solution of $(P_\la)$.\QED
\end{proof}
\noi {\bf Proof of Theorem \ref{thm2.4}:} Now the proof of theorem \ref{thm2.4} follows from proposition \ref{prp4.2}, lemma \ref{lem4.3} and proposition \ref{prp5.4}.

\section{Regularity of  weak solutions}

In this section, we shall prove some regularity properties of positive weak solutions of $(P_{\la})$. We begin with the following lemma.

\begin{Lemma}
Suppose $u$ is a  weak solution of $(P_{\la})$, then for each $w \in X_0$, it satisfies
\[ a(x)u^{-q}w \in L^{1}(\Om) \text{ and } \int_Q \frac{(u(x)-u(y))(w(x)-w(y))}{|x-y|^{n+2s}}~dxdy -  \int_\Om \left(\la a u^{-q} +  u^{2^{s}_{*}-1}\right)w dx = 0. \]
\end{Lemma}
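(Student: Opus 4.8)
The plan is to enlarge the class of admissible test functions in the weak formulation in two stages. Since $X_0$ is stable under $w\mapsto w^{\pm}$ (the Gagliardo seminorm does not increase under taking positive and negative parts) and $\la a(x)u^{-q}\ge 0$, it suffices to prove both assertions for an arbitrary $w\in X_+$; the general case then follows by writing $w=w^+-w^-$, applying the result to each part, and subtracting.

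First I would check that the identity in the definition of a weak solution, assumed only for $\psi\in C^\infty_c(\Om)$, in fact holds for every $v\in X_0$ with compact support in $\Om$. This rests on a positive lower bound on compact sets: since $u>0$ and, by Lemma~\ref{le05}, $u\ge\phi=\eta\phi_1$ with $\phi_1$ continuous and strictly positive in $\Om$ by interior regularity, each compact $K\subset\Om$ admits $c_K>0$ with $u\ge c_K$ a.e.\ on $K$. Consequently, restricted to the $X_0$-functions supported in $K$, each of the three terms of the weak formulation is a bounded linear functional of the test function: the bilinear form by Cauchy--Schwarz, the singular term because $0\le \la a(x)u^{-q}\le \la\|a\|_\infty c_K^{-q}$ on $K$ and $\|v\|_1\le C\|v\|$, and the critical term by Hölder's inequality together with $u^{2^*_s-1}\in L^{(2^*_s)'}(\Om)$ and $X_0\hookrightarrow L^{2^*_s}(\Om)$. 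As $C^\infty_c(\Om)$ is dense in $X_0$ and multiplication by a fixed cut-off $\chi\in C^\infty_c(\Om)$ is bounded on $X_0$, functions from $C^\infty_c(\Om)$ supported in a fixed neighbourhood of $K$ are dense among the $X_0$-functions supported in $K$; passing to the limit extends the identity to all such $v$.

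Next I would pass to the limit along a monotone approximation. Given $w\in X_+$, Lemma~\ref{lem2.1} produces $0\le w_1\le w_2\le\cdots$ in $X_0$, each $w_k$ compactly supported in $\Om$, with $w_k\to w$ strongly in $X_0$. Testing the extended identity with $w_k$ and sending $k\to\infty$: the bilinear term converges by continuity of the inner product of $X_0$; $\int_\Om u^{2^*_s-1}w_k\,dx\to\int_\Om u^{2^*_s-1}w\,dx$, which is finite since $u^{2^*_s-1}w\in L^1(\Om)$ by Hölder and $0\le w_k\le w$; and $\int_\Om \la a(x)u^{-q}w_k\,dx$ increases to $\int_\Om \la a(x)u^{-q}w\,dx\in[0,\infty]$ by monotone convergence. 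Because the left-hand side and the critical term have finite limits, so does the singular term, which at once gives $a(x)u^{-q}w\in L^1(\Om)$ and
\[ \int_Q \frac{(u(x)-u(y))(w(x)-w(y))}{|x-y|^{n+2s}}\,dxdy = \int_\Om (\la a(x)u^{-q}+u^{2^*_s-1})w\,dx. \]
Applying this to $w^+$ and $w^-$ and subtracting yields the statement for every $w\in X_0$.

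The principal difficulty is the extension step: one must turn the singular integral into a continuous linear functional of the test function, which is exactly where the uniform positivity of $u$ on compact subsets of $\Om$ (hence Lemma~\ref{le05} together with the interior continuity of $\phi_1$) is used; once the identity holds for compactly supported elements of $X_0$, the remaining passages to the limit are routine monotone and dominated convergence.
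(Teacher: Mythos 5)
Your proposal follows the same route as the paper: approximate $w\in X_{+}$ by the monotone, compactly supported sequence from Lemma~\ref{lem2.1}, test the weak formulation with each $w_k$, pass to the limit via the monotone convergence theorem, and then recover general $w\in X_0$ by writing $w=w^{+}-w^{-}$. Where you go beyond the paper is in spelling out why the identity, assumed only for $\psi\in C^{\infty}_{c}(\Om)$, actually holds for the $w_k$'s, which are merely compactly supported elements of $X_0$; the paper passes over this silently, whereas you supply a cut-off/density argument backed by a uniform lower bound for $u$ on compact subsets. One caveat worth noting: you derive that lower bound from Lemma~\ref{le05}, which is stated and proved only for the minimizers $u_\la$ and $v_\la$, not for an arbitrary weak solution of $(P_\la)$. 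Thus, as written, your extension step is rigorous only for the solutions the paper ultimately applies this lemma to (which is all that is needed in context), but not for the lemma in the generality in which it is stated; the paper itself shares this latent gap.
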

\begin{proof}
Let $u$ be a  weak solution of $(P_{\la})$ and $w \in X_{+}$. By lemma $\ref{lem2.1}$, we obtain a sequence $\{w_k \} \in X_{0}$ such that $\{w_k\} \rightarrow w$ strongly in $X_0$, each $w_k$ has compact support in $
\Om$ and $0 \leq w_1 \leq w_2 \leq \ldots$ Since each $w_k$ has compact support in $\Om$ and $u$ is a positive weak solution of $(P_\la)$, for each $k$ we obtain
\[  \la \int_\Om a(x) u^{-q}w_k~dx = \int_Q \frac{(u(x)-u(y))(w_k(x)-w_k(y))}{|x-y|^{n+2s}} ~dxdy - \int_\Om u^{2^{s}_{*}-1}w_k~ dx .\]
Using monotone convergence theorem, we obtain $a(x)u^{-q}w \in L^{1}(\Om)$  and
\[\la \int_\Om a(x) u^{-q}w~dx = \int_Q \frac{(u(x)-u(y))(w(x)-w(y))}{|x-y|^{n+2s}}~ dxdy -\int_\Om u^{2^{s}_{*}-1}w~ dx = 0. \]
If $w \in X_0$, then $w = w^+ - w^-$ and $w^+, w^- \in X_{+}$. Since we proved the lemma for each $w \in X_+$, we obtain the conclusion.\QED
\end{proof}
\begin{Lemma}\label{lem6.2}
Let $u$ be a  weak solution of $(P_{\la})$. Then $u \in L^{\infty}(\Om)$.
\end{Lemma}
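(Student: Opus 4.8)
The plan is to run a Moser-type iteration adapted to the fractional setting, using the weak formulation against suitable truncated powers of $u$ as test functions. First I would exploit the structure of the equation: since $u$ is a positive weak solution and, by Lemma~\ref{le05}, $u \geq \phi > 0$ in $\Om$, the singular term $\la a(x) u^{-q}$ is bounded above on $\Om$ by $\la \|a\|_\infty (\min_\Om \phi)^{-q} =: C_0 < \infty$. Hence $u$ is a (weak) subsolution of $(-\De)^s u \leq C_0 + u^{2^*_s - 1}$ in $\Om$, with $u = 0$ outside $\Om$. The singular nonlinearity is therefore no longer an obstacle; the genuine difficulty is the critical growth term $u^{2^*_s-1}$, exactly as in the non-singular critical problem.

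The key steps, in order, are as follows. (1) For $\beta \geq 1$ and $T > 0$ set $u_T = \min\{u, T\}$ and use $w = u\, u_T^{2(\beta-1)}$ (suitably truncated so that $w \in X_0$, which is legitimate since $w$ has the same support as $u$ and is bounded) as a test function. (2) Use the algebraic inequality controlling $(a - b)(a\,\phi(a) - b\,\phi(b))$ from below by $|\,a\phi(a)^{1/2}\cdot\ldots\,|$-type quantities — concretely, the standard estimate giving
\[
\int_Q \frac{(u(x)-u(y))(w(x)-w(y))}{|x-y|^{n+2s}}\,dxdy \;\geq\; c(\beta)\,\big\| u\, u_T^{\beta-1}\big\|_{2^*_s}^2
\]
after applying the fractional Sobolev inequality to $v_T := u\,u_T^{\beta-1} \in X_0$, with $c(\beta) \sim \beta^{-1}$ or better. (3) Bound the right-hand side: $\int_\Om (C_0 + u^{2^*_s-1}) u\, u_T^{2(\beta-1)}\,dx \leq C_0\int_\Om u\,u_T^{2(\beta-1)} + \int_\Om u^{2^*_s - 2}\, v_T^2$, and split the last integral over $\{u \leq K\}$ and $\{u > K\}$; on the first set it is $\leq K^{2^*_s-2}\|v_T\|_2^2$, and on the second, by Hölder with exponents $n/2s$ and $2^*_s/2$, it is $\leq \big(\int_{\{u>K\}} u^{2^*_s}\big)^{2s/n} \|v_T\|_{2^*_s}^2$, which can be absorbed into the left-hand side once $K$ is chosen large so that $\big(\int_{\{u>K\}} u^{2^*_s}\big)^{2s/n}$ is small — this uses only $u \in L^{2^*_s}(\Om)$. (4) This yields $v_T \in L^{2^*_s}$ with a bound uniform in $T$, i.e. $u \in L^{\beta 2^*_s}(\Om)$ with a quantitative estimate; letting $T \to \infty$ (monotone convergence) and then iterating $\beta \mapsto \beta \cdot 2^*_s/2$ gives $u \in L^p(\Om)$ for every $p < \infty$ with $\|u\|_p$ controlled. (5) A final bootstrap: once $u \in L^p$ for $p$ large, the right-hand side $f := \la a u^{-q} + u^{2^*_s-1}$ lies in $L^r(\Om)$ for some $r > n/2s$, and then the regularity theory for $(-\De)^s$ with $L^r$ datum (e.g. Servadei--Valdinoci, or the results quoted from \cite{s3}) gives $u \in L^\infty(\Om)$.

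The main obstacle is step (3): handling the critical exponent so that the iteration closes. Because $2^*_s - 1$ is exactly critical, one cannot naively absorb $\int u^{2^*_s-2} v_T^2$ into the left side for arbitrary functions; the trick is that the \emph{tail} $\int_{\{u>K\}} u^{2^*_s}$ is small by absolute continuity of the integral, which makes the absorption possible for the \emph{fixed} solution $u$. One must also be careful that all constants in the iteration (the $c(\beta)$ from the Sobolev step and the constants from the truncation inequalities) depend on $\beta$ polynomially, so that the infinite product arising in the Moser iteration converges — this is where the truncation with $u_T$ rather than $u$ itself is essential to keep every quantity finite at each stage. I would also record the uniform lower bound $u \geq \phi$ explicitly at the start, since it is what decouples the singular term from the argument entirely.
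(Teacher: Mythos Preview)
Your Moser iteration (steps (2)--(4)), including the absorption trick in step~(3) via the smallness of $\int_{\{u>K\}} u^{2^*_s}$, is essentially the paper's argument, which follows \cite{bss}. However, your treatment of the singular term contains a genuine error. You claim that $u\ge\phi$ gives $\lambda a(x)u^{-q}\le \lambda\|a\|_\infty(\min_\Om\phi)^{-q}=:C_0<\infty$. But $\phi=\eta\phi_1$ where $\phi_1$ is the first Dirichlet eigenfunction, which vanishes on $\partial\Om$; hence $\inf_\Om\phi=0$ and your $C_0$ is $+\infty$. The reduction ``$u$ is a weak subsolution of $(-\De)^s u\le C_0+u^{2^*_s-1}$ on $\Om$'' therefore fails. (A minor related point: Lemma~\ref{le05} as stated applies only to the minimizers $u_\la,v_\la$, not to an arbitrary weak solution; the bound $u\ge\phi$ does extend to any weak solution once the equation is known to hold against all $w\in X_0$, but you should say so rather than cite Lemma~\ref{le05} directly.)

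The repair is immediate and is exactly what the paper does: do not bound $u^{-q}$ pointwise. After testing against $w= u\,u_T^{2(\beta-1)}$ (or, in the paper's version, $\varphi(u)\varphi'(u)$ with $\varphi$ a truncated $\beta$-th power), the singular contribution is $\lambda\int_\Om a\,u^{1-q}u_T^{2(\beta-1)}$. Since $0<q\le1$, the exponent $1-q\ge0$, so this is a subcritical lower-order term; one estimates $\int_\Om u^{2\beta-q}\le |\Om|+\int_{\{u\ge1\}} u^{2\beta+2^*_s-2}$ and folds it into the iteration just as you intended to do with $C_0\int_\Om u\,u_T^{2(\beta-1)}$. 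Your step~(5) has the same defect: even with $u\in L^p$ for every $p<\infty$, the right-hand side $f=\lambda a u^{-q}+u^{2^*_s-1}$ need not lie in $L^r(\Om)$ for any $r>n/(2s)$, because $u$ vanishes on $\partial\Om$ and $u^{-q}$ blows up there. The paper avoids this by tracking the constants in the Moser recursion and extracting $\|u\|_\infty$ directly from the uniform bound $D_{m+1}\le C_4 D_1$; you should do the same and drop step~(5) entirely.
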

\begin{proof} We follow \cite{bss}.
We use the following inequality known for fractional Laplacian
\[(-\De)^s\varphi(u) \leq \varphi^{\prime}(u)(-\De)^s u, \]
where $\varphi$ is a convex and differentiable function. We define
$$\varphi(t) = \varphi_{T,\beta}(t)
\left\{
	\begin{array}{ll}
		0 & \mbox{if } t\leq 0 \\
		t^{\beta} & \mbox{if } 0<t<T\\
        \beta T^{\beta -1}(t-T) + T^{\beta} & \mbox{if } t\geq T
	\end{array}
\right.$$
where $\beta \geq 1$ and $T > 0$ is large. Then $\varphi$ is Lipschitz with constant $M = \beta T^{\beta -1}$ which gives $\varphi \in X_0$. Thus,
\begin{equation}\label{eqreg1}
\begin{split}
\|\varphi(u)\|  = \left( \int_{Q} \frac{|\varphi(u(x))-\varphi(u(y))|^2}{|x-y|^{n+2s}} \right)^{\frac{1}{2}}
 \leq \left( \int_{Q} \frac{M^2|u(x)-u(y)|^2}{|x-y|^{n+2s}} \right)^{\frac{1}{2}} = M^2\|u\|.
\end{split}
\end{equation}
Using $\|\varphi(u)\| = \|(-\De)^{s/2}\varphi(u)\|_{2}$, we obtain
\[ \int_{\Om}\varphi(u) (-\De)^s \varphi(u) = \|\varphi(u)\|^2 \geq S \|\varphi(u)\|^2_{2^*_s},
 \]
 where $S$ is as defined in section $1$. Since $\varphi$ is convex and $\varphi(u)\varphi^{\prime}(u) \in X_0$, we obtain
 \begin{equation}\label{eqreg2}
 \begin{split}
 \int_{\Om}\varphi(u) (-\De)^s \varphi(u) & \leq \int_{\Om}\varphi(u)\varphi^{\prime}(u) (-\De)^s u\\
 & = \int_{\Om}\varphi(u)\varphi^{\prime}(u) (\lambda a(x)u^{-q}+ u^{2^*_s-1})dx.
 \end{split}
\end{equation}
Therefore using \eqref{eqreg1} and \eqref{eqreg2}, we obtain
\begin{equation}\label{eqreg3}
\|\varphi(u)\|^2_{2^*_s} \leq C \int_{\Om}\varphi(u)\varphi^{\prime}(u)(\lambda a(x)u^{-q}+ u^{2^*_s-1})dx,
\end{equation}
for some constant $C$. We have $u \varphi^{\prime}(u)\leq \beta \varphi(u)$ and $\varphi^{\prime}(u)\leq \beta(1+\varphi(u))$ which gives
\begin{equation*}\label{eqreg5}
\begin{split}
&\int_{\Om}\varphi(u)\varphi^{\prime}(u)(\lambda a(x)u^{-q}+ u^{2^*_s-1})dx\\
& =  \int_{\Om} \left(\lambda a(x)\varphi(u)\varphi^{\prime}(u)u^{-q}dx + \varphi(u)\varphi^{\prime}(u)u^{2^*_s-1}\right) dx\\
& \le  \la \|a\|_{\infty}\beta \int_{\Om}\varphi(u)u^{-q}(1+\varphi(u)) + \beta \int_{\Om} (\varphi(u))^2 u^{2^*_s-2}.
\end{split}
\end{equation*}
Thus from \eqref{eqreg3}, we obtain
\begin{equation*}
\left(\int_{\Om}|\varphi(u)|^{2^*_s}\right)^{2/2^*_s} \leq C_2\beta \left( \la \|a\|_{\infty} \int_{\Om}(\varphi(u)u^{-q} + (\varphi(u))^2u^{-q}) + \int_{\Om}(\varphi(u))^2 u^{2^*_s-2} \right).\\
\end{equation*}
where $C_2= C\max\{\la \|a\|_{\infty},1\}$. Next we claim that $u \in L^{\beta_1 2^*_s}(\Om)$, where $\beta_1 = 2^*_s/2$. Fixing some $K$ whose appropriate value is to be determined later and taking $r = \beta_1, s = 2^*_s/(2^*_s-2)$, we obtain
\begin{equation*}
\begin{split}
\int_{\Om}(\varphi(u))^2u^{2^*_s-2} &= \int_{u\leq K}(\varphi(u))^2u^{2^*_s-2} + \int_{u > K}(\varphi(u))^2u^{2^*_s-2}\\
& \leq K^{2^*_s-2} \int_{u \leq K} (\varphi(u))^2 + \left( \int_{\Om}(\varphi(u))^{2^*_s} \right)^{2/2^*_s} \left( \int_{u>K} u^{2^*_s} \right)^{(2^*_s-2)/2^*_s}.
\end{split}
\end{equation*}
Using Monotone Convergence theorem, we choose $K$ such that
\[ \left(\int_{u>K}u^{2^*_s}\right)^{(2^*_s-2)/2^*_s} \leq \frac{1}{2C_2\beta} \]
and this gives
\begin{equation}\label{eqreg6}
\left(\int_{\Om} (\varphi(u))^{2^*_s} \right)^{2/2^*_s} \leq 2C_2\beta \left( \int_{\Om}(\varphi(u))u^{-q} + \int_{\Om}(\varphi(u))^2u^{-q} +K^{2^*_s-2} \int_{u\leq K}(\varphi(u))^2 \right).
\end{equation}
Using $\varphi_{T,\beta_1}(u)\leq u^{\beta_1}$ in left hand side of \eqref{eqreg6} and then letting $T \rightarrow \infty$ in right hand side, we obtain
\[ \left(\int_{\Om}u^{2^*_s \beta_1}\right)^{2/2^*_s} \leq 2C_2\beta_1 \left( \int_{\Om}u^{\frac{2^*_s}{2}-q} + \int_{\Om}u^{2^*_s-q} + K^{2^*_s-2}\int_{\Om}u^{2^*_s}\right) \]
since $2\beta_1 = 2^*_s$. This proves the claim. Again, from \eqref{eqreg5}, using $\varphi_{T,\beta}(u)\leq u^{\beta}$ in left hand side and then letting $T \rightarrow \infty$ in right hand side, we obtain
\begin{equation}\label{eqreg7}
\begin{split}
\left(\int_{\Om}u^{2^*_s \beta}\right)^{2/2^*_s} &\leq C_2\beta \left( \int_{\Om}u^{\beta -q} + \int_{\Om}u^{2\beta-q} + K^{2^*_s-2}\int_{\Om}u^{2 \beta+2^*_s-2}\right)\\
& \leq C_2^{\prime}\beta \left( 1 + \int_{\Om}u^{2\beta-q} + K^{2^*_s-2}\int_{\Om}u^{2 \beta+2^*_s-2}\right),
\end{split}
\end{equation}
where $C_2^{\prime}> 0$ is a constant. Now we see that
\begin{equation*}
\begin{split}
\int_{\Om}u^{2\beta-q} = \int_{u\geq 1}u^{2\beta-q}+ \int_{u<1}u^{2\beta-q} \leq  \int_{u \geq 1}u^{2\beta+2^*_s-2}+|\Om|.
\end{split}
\end{equation*}
Using this in \eqref{eqreg7}, with some simplifications, we obtain
\begin{equation}\label{eqreg8}
\left(1+\int_{\Om}u^{2^*_s \beta}\right)^{\frac{1}{2^*_s(\beta-1)}} \leq {C^{\frac{1}{2(\beta-1)}}_{\beta}}\left( 1+\int_{\Om}u^{2\beta+2^*_s-2} \right)^{\frac{1}{2(\beta-1)}},
\end{equation}
where $C_{\beta}= 4C_2^{\prime} \beta(1+|\Om|)$. For $m \geq 1$, let us define $\beta_{m+1}$ inductively by
\[ 2\beta_{m+1}+2^*_s-2 = 2^*_s \beta_m \]
that is $(\beta_{m+1} - 1)= \frac{2^*_s}{2}(\beta_m -1) = \left(\frac{2^*_s}{2}\right)^m(\beta_1-1)$. Hence, from \eqref{eqreg8} it follows that
\begin{equation*}
\left(1+\int_{\Om}u^{2^*_s \beta_{m+1}}\right)^{\frac{1}{2^*_s(\beta_{m+1}-1)}} \leq {C^{\frac{1}{2(\beta_{m+1}-1)}}_{\beta_{m+1}}}\left( 1+\int_{\Om}u^{2^*_s\beta_m} \right)^{\frac{1}{2^*_s(\beta_m-1)}},
\end{equation*}
where $C_{\beta_{m+1}}= 4C_2^{\prime}\beta_{m+1}(1+|\Om|)$. Setting $D_{m+1} := \left( 1+\int_{\Om}u^{2^*_s\beta_m} \right)^{\frac{1}{2^*_s(\beta_m-1)}}$, we obtain
\begin{equation*}
\begin{split}
D_{m+1} &\leq 
 \{4C_2^{\prime}(1+|\Om|)\}^{\sum_{i=2}^{m+1}\frac{1}{2(\beta_i-1)}} \prod^{m+1}_{i=2}\left( 1+\left( \frac{2^*_s}{2}\right)^{i-1}(\beta_1-1) \right)^{\frac{1}{2((2^*_s/2)^{i-1}(\beta_1-1))}}D_1.
\end{split}
\end{equation*}
Since 
$\lim\limits_{i \rightarrow \infty}\left(\left(\frac{2^*_s}{2}\right)^{i-1}(\beta_1-1)+1\right)^{\frac{1}{2((2^*_s/2)^{i-1}(\beta_1-1))}} = 1,$ there exists $C_3 >1$ (independent of $\beta_i$'s) such that
$$D_{m+1} \leq \{4C_2^{\prime}(1+|\Om|)\}^{\sum_{i=2}^{m+1}\frac{1}{2(\beta_i-1)}}C_3D_1.$$
However, $\sum_{i=2}^{m+1}\frac{1}{2(\beta_i-1)} = \frac{1}{2(\beta_1-1)}\sum_{i=2}^{m+1}\left( \frac{2^*_s}{2} \right)^i$ and $\left(\frac{2^*_s}{2}\right) <1$ implies that there exists a constant $C_4>0$ such that
$D_{m+1}\leq C_4D_1$, that is
\begin{equation}\label{eqreg10}
\left(1+ \int_{\Om}u^{2^*_s(\beta_{m+1})} \right)^{\frac{1}{2^*_s(\beta_{m+1}-1)}}\leq C_4D_1,
\end{equation}
where $2^*_s\beta_m \rightarrow \infty$ as $m \rightarrow \infty$. Let us assume $\|u\|_{\infty}> C_4D_1$. Then there exists $\eta > 0$ and ${\Om}^{\prime} \subset \Om$ such that $0<|{\Om}^{\prime}|< \infty$ and
\[u(x) > C_4D_1+\eta, \text{ for all } x \in {\Om}^{\prime}.\]
It follows that
\begin{equation*}
\liminf\limits_{\beta_m \rightarrow \infty}\left( \int_{\Om}|u|^{2^*_s\beta_m}+1\right)^{\frac{1}{2^*_s\beta_m -1}}\geq \liminf\limits_{\beta_m \rightarrow \infty}\left( C_4D_1+\eta \right)^{\frac{\beta_m}{\beta_m-1}}(|{\Om}^{\prime}|)^{\frac{1}{2^*_s(\beta_m-1)}} = C_4D_1+\eta
\end{equation*}
which contradicts \eqref{eqreg10}. Hence, $\|u\|_{\infty}\leq C_4D_1$ that is $u \in L^{\infty}(\Om)$. \QED
\end{proof}

\begin{Theorem}
Let $u$ be a positive solution of $(P_{\la})$. Then there exist $\alpha \in (0,s]$ such that $u \in C_{loc}^{\alpha}(\Om^{\prime})$, for all $\Om^{\prime} \Subset \Om$.
\end{Theorem}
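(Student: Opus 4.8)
The plan is to reduce $(P_\la)$, locally in $\Om$, to an equation with a bounded right-hand side and then quote the interior Hölder estimate for the fractional Laplacian. Fix $\Om'\Subset\Om$ and choose an intermediate open set $\Om''$ with $\Om'\Subset\Om''\Subset\Om$. By Lemma~\ref{lem6.2} we already know $u\in L^\infty(\Om)$; moreover $u\equiv 0$ outside the bounded set $\Om$, so $u\in L^1(\mb R^n)\cap L^\infty(\mb R^n)$ and in particular all the ``tail'' integrals $\int_{\mb R^n}|u(y)|(1+|y|)^{-n-2s}\,dy$ that enter nonlocal estimates are finite.

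The substantive step is a positive lower bound for $u$ away from $\pa\Om$: a constant $m_0=m_0(\Om'')>0$ with $u\ge m_0$ a.e.\ in $\Om''$. Since the right-hand side of $(P_\la)$ is nonnegative, $u$ is a nonnegative weak supersolution of $(-\De)^su=0$ in $\Om$; together with $u>0$ in $\Om$, the minimum principle / weak Harnack inequality for $s$-superharmonic functions yields $\inf_{\Om''}u>0$. For the particular solutions $u_\la,v_\la$ produced in Sections~4--5 one may instead simply invoke Lemma~\ref{le05}, since $\phi=\eta\phi_1$ is continuous and strictly positive on $\ov{\Om''}$. Writing $M_0:=\|u\|_{L^\infty(\Om)}$ and using $a\in L^\infty(\Om)$, we conclude that
\[
 f(x):=\la a(x)u(x)^{-q}+u(x)^{2^*_s-1}
\]
satisfies $\|f\|_{L^\infty(\Om'')}\le \la\|a\|_\infty m_0^{-q}+M_0^{2^*_s-1}<\infty$. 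This is exactly where $\Om''\Subset\Om$ is used: globally $f$ need not be bounded, since $u\to0$ near $\pa\Om$.

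By the first lemma of this section the weak formulation of $(P_\la)$ holds against every test function in $X_0$, so $u\in X_0\cap L^\infty(\mb R^n)$ is a weak solution of $(-\De)^su=f$ in $\Om''$ with $f\in L^\infty(\Om'')$. Now I would invoke the interior Hölder regularity for the fractional Laplacian with bounded data (see, e.g., \cite{s3} and the references therein, and the estimates of Caffarelli--Silvestre and Ros-Oton--Serra): there is $\al\in(0,s]$, depending only on $n$ and $s$, such that for every ball $B_{2r}(x_0)\subset\Om''$,
\[
 \|u\|_{C^\al(B_r(x_0))}\le C\big(\|u\|_{L^\infty(\mb R^n)}+\|f\|_{L^\infty(B_{2r}(x_0))}\big),
\]
with $C=C(n,s,r)$. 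Covering the compact set $\ov{\Om'}$ by finitely many such balls contained in $\Om''$ yields $u\in C^\al(\Om')$, and since $\Om'\Subset\Om$ was arbitrary this gives $u\in C^\al_{loc}(\Om)$. The main obstacle is precisely the lower bound $u\ge m_0>0$ on $\Om''$: without it the singular term $u^{-q}$ is not locally bounded and the reduction to bounded data fails. Once that is secured the rest is a direct citation; in fact one can then bootstrap, since $f$ becomes locally Hölder continuous and hence $u$ is locally $C^{2s+\al'}$ below the relevant thresholds, but the first iteration already proves the stated claim.
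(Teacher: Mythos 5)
Your proof is essentially the same as the paper's: both establish that $u$ is locally bounded away from $0$ (so $u^{-q}$ is locally bounded) and bounded above (Lemma 6.2), conclude that $(-\Delta)^s u$ has locally bounded data, and then invoke interior H\"older estimates for the fractional Laplacian together with a covering argument. The only minor difference is in how the local lower bound is secured: the paper appeals directly to Lemma \ref{le05} ($u\ge\phi=\eta\phi_1>0$ on $\Om'$, which strictly speaking only covers the two constructed minimizers), and cites Iannizzotto--Mosconi--Squassina for the regularity estimate, whereas you offer the weak Harnack / minimum principle for $s$-superharmonic functions as the primary route (applicable to any positive weak solution) with the Lemma \ref{le05} argument noted as a fallback, and you cite the Caffarelli--Silvestre / Ros-Oton--Serra style estimates; your version is, if anything, slightly more robust with respect to the hypothesis ``let $u$ be a positive solution.''
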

\begin{proof}
Let $\Om^{\prime} \in \Om$. Then using lemma \ref{le05} and above regularity result, for any $\psi \in C_{c}^{\infty}(\Om)$ we obtain
\begin{equation*}
 \la \int_{\Om^{\prime}}u^{-q}\psi dx + \int_{\Om^{\prime}}u^{2^*_s-1}\psi  dx\leq \la \int_{\Om^{\prime}}\phi_{1}^{-q}\psi  dx+ \|u\|_{\infty}^{2^*_s-1} \int_{\Om^{\prime}}\psi dx \leq C \int_{\Om^{\prime}}\psi dx
\end{equation*}
for some constant $C>0$, since we can find $k>0$ such that $\phi_1>k$ on $\Om^{\prime}$. Thus we have $|(-\De)^su|\leq C$ weakly on $\Om^{\prime}$. So, using theorem 4.4 of \cite{Asm} and applying a covering argument on inequality in corollary 5.5 of \cite{Asm}, we can prove that there exist $\alpha \in (0,s] $ such that $u \in C_{loc}^{\alpha}(\Om^{\prime})$, for all $\Om^{\prime} \Subset \Om$.\QED
\end{proof}


\begin{thebibliography}{21}
\bibitem{AJ} Adimurthi and J. Giacomoni, {\it Multiplicity of positive solutions for a singular and critical elliptic problem in $\mathbb R^2$}, Communications in Contemporary Mathematics {\bf 8} (2006), 621--656.

\bibitem{da}D. Applebaum, {\it L$\acute{e}$vy process-from probability to finance and quantum groups}, Notices Amer. Math. Soc.
{\bf 51} (2004), 1336--1347.

\bibitem{Aut} G. Autuori and P. Pucci, {\it Elliptic problems involving the fractional Laplacian in ${\mathbb R}^n$}, J. Differential Equations {\bf 255} (2013), 2340--2362.

\bibitem{bss} B. Barrios, E. Coloradoc, R. Servadei and F. Soriaa, {\it A critical fractional equation with concave–convex power nonlinearities}, Ann. I. H. Poincar\'e Analyse Non Lin\'eaire {\bf 32} (2015), 875--900.

\bibitem{peral}  B. Barrios, I. De Bonis,  M. María and  I. Peral, {\it  Semilinear problems for the fractional Laplacian with a singular nonlinearity}, Open Math. {\bf 13} (2015), 390--407.


\bibitem{bc} B. Br\"{a}ndle and E. Colorado, A. De Pablo and U. S\`{a}nchez, {\it A concave-convex elliptic problem involving the fractional Laplacian}, Proc. Roy. Soc. Edinburgh. Sect. A {\bf 143} (2013), 39--71.

\bibitem{XcT} X. Cabr\'{e} and J.G. Tan, {\it Positive solutions of nonlinear problem involving the square root of the Laplacian}, Adv. Math. {\bf 224} (2) (2010), 2052--2093.

\bibitem{crt} M.G. Crandall,  P.H. Rabinowitz and L. Tartar, {\it On a Dirichlet problem with a singular nonlinearity}, Communications in Partial Differential Equations {\bf 2} (1977), 193--222.

\bibitem{DJP} R. Dhanya, J. Giacomoni, S. Prashanth and K. Saoudi, {\it Global bifurcation and local multiplicity results for elliptic equations with singular nonlinearity of super exponential growth in $\mathbb{R}^2$}, Advances in Differential Equations {\bf 17} (3-4) (2012), 369--400.

\bibitem{fa} Y. Fang, {\it Existence, uniqueness of positive solution to a fractional Laplacians with singular non linearity }, preprint (2014), {\tt http://arxiv.org/pdf/1403.3149.pdf}.

\bibitem{gl} A. Garroni and S. M$\ddot{u}$ller, {\it $\Gamma$-limit of a phase-field model of dislocations}, SIAM J. Math. Anal. {\bf 36} (2005), 1943--1964.

\bibitem{GR1} M. Ghergu and V. R\u adulescu, {\it Singular Elliptic Problems. Bifurcation and Asymptotic Analysis}, Oxford Lecture Series in Mathematics and Its Applications, Oxford University Press, 37 (2008).

\bibitem{GR} M. Ghergu and V. Radulescu, {\it Singular elliptic problems with lack of compactness}, Ann. Mat. Pura Appl. {\bf 185}(1) (2006), 63–-79

\bibitem{JPS} J. Giacomoni, P. Kumar Mishra and K. Sreenadh, {\it Fractional elliptic equations with critical exponential nonlinearity}, Adv. Nonlinear Anal. {\bf 5}(1) (2016), 57--74.

\bibitem{ss1} S. Goyal and K. Sreenadh, {\it Existence of multiple solutions of p-fractional Laplace operator with sign-changing weight function}, Adv. Nonlinear Anal. {\bf 4}(1) (2015), 37--58.

\bibitem{ss2} S. Goyal and K. Sreenadh, {\it The Nehari manifold for non-local elliptic operator with concave-convex nonlinearities and sign-changing weight functions}, Proc. Indian Acad. Sci. Math. Sci., PMSC-D-13-00460.

\bibitem{haitao} Y. Haitao, {\it Multiplicity and asymptotic behavior of positive solutions for a singular semilinear elliptic problem}, J. Differential Equations {\bf 189} (2003), 487--512.

\bibitem{HFV} J. Hern\'{a}ndez, J. Francisco and  J.M. Vega, {\it  Positive solutions for singular nonlinear elliptic equations}, Proc. Roy. Soc. Edinburgh Sect. A {\bf 137} (1) (2007), 41–-62.

\bibitem{HKS} J. Hern\'{a}ndez, J. Karátson, and P.L. Simon, {\it  Multiplicity for semilinear elliptic equations involving singular nonlinearity}, Nonlinear Anal. {\bf 65} (2) (2006), 265–-283.

\bibitem{Hirano} N. Hirano, C. Saccon and N. Shioji, {\it Brezis-Nirenberg type theorems and multiplicity of positive solutions for a singular elliptic problem}, J. Differential Equations {\bf 245} (2008), 1997--2037.

\bibitem{hcn} N. Hirano, C. Saccon and N. Shioji, {\it Existence of multiple positive solutions for singular elliptic problems with concave and convex nonlinearities}, Advances in Differential Equations {\bf 9} (1-2) (2004), 197--220.

\bibitem{Asm} A. Iannizzotto, S. Mosconi and M. Squassina, {\it Global H\"{o}lder regularity
for the fractional $p$-Laplacian}, Preprint(2015), {\tt http://arxiv.org/pdf/1411.2956.pdf}


\bibitem{md1} G. Molica Bisci and D. Repov\v{s}, {\it Existence and localization of solutions for nonlocal fractional equations}, Asymptot. Anal. {\bf 90} (3-4) (2014), 367--378.

\bibitem{md2} G. Molica Bisci and D. Repov\v{s}, {\it On doubly nonlocal fractional elliptic equations}, Atti Accad. Naz. Lincei Rend. Lincei Mat. Appl. {\bf 26} (2) (2015), 161--176.

\bibitem{mb1} G. Molica Bisci and R. Servadei, {\it A Brezis-Nirenberg spitting approach for nonlocal fractional equations}, Nonlinear Anal. {\bf 119} (2015), 341--353.

\bibitem{mb3} G. Molica Bisci and V. R\u adulescu, {\it Ground state solutions of scalar field fractional Schr\"{o}dinger equations}, Calc. Var. Partial Differential Equations {\bf 54} (3) (2015), 2985--3008.

\bibitem{mb} G. Molica Bisci and R. Servadei, {\it Lower semicontinuity of functionals of fractional type and applications to nonlocal equations with critical Sobolev exponent}, Advances in Differential Equations {\bf 20} (2015), 635--660.

\bibitem{mb2} G. Molica Bisci, V. R\u adulescu and  R. Servadei, {\it Variational Methods for Nonlocal Fractional Problems}, Encyclopedia of Mathematics, Cambridge University Press, Cambridge, (2016).

\bibitem{NR} N.S. Papageorgiou and V. R\u adulescu,  {\it Combined effects of singular and sublinear nonlinearities in some elliptic problems}, Nonlinear Anal. {\bf 109} (2014), 236--244.

\bibitem{puc} P. Pucci, M. Xiang and B. Zhang, {\it Existence and multiplicity of entire solutions for fractional $p$-Kirchhoff equations}, Adv. Nonlinear Anal. {\bf 5} (1) (2016), 27--55.

\bibitem{s1} R. Servadei and E. Valdinoci, {\it Mountain pass solutions for nonlocal elliptic operators}, J. Math. Anal. Appl. {\bf 389} (2012), 887--898.

\bibitem{sv} R. Servadei and E. Valdinoci, {\it The Brezis-Nirenberg result for the fractional Laplacian}, Trans. Amer. Math. Soc. {\bf 367} (2015), 67--102.

\bibitem{s2} R. Servadei and E. Valdinoci,{\it Variational methods for nonlocal operators of elliptic type}, Discrete Contin. Dyn. Syst. {\bf 33} (5) (2013), 2105--2137.

\bibitem{s3} R. Servadei and E. Valdinoci,{\it A Brezis-Nirenberg result for nonlocal critical equations in low dimension}, Commun. Pure Appl. Anal. {\bf 12} (6) (2013), 2445--2464.

\bibitem{xy} X. Yu, {\it The Nehari manifold for elliptic equation involving the square root of the Laplacian}, J.
Differential Equations {\bf 252} (2012), 1283--1308.

\bibitem{zlh} J. Zhang, X. Liu and H. Jiao, {\it Multiplicity of positive solutions for a fractional Laplacian equations
involving critical nonlinearity}, preprint (2015), {\tt http://arxiv.org/abs/1502.02222.}



\end{thebibliography}
\end{document}